\documentclass{amsart}
\usepackage{amsmath, amsthm, verbatim, amsfonts, amssymb, color}
\usepackage{mathrsfs}
\usepackage{dsfont}
\usepackage[a4paper,margin=3cm]{geometry}

\usepackage{graphicx}  
\usepackage{marginnote}  

\usepackage{algorithm,algorithmic}
\usepackage{booktabs}
\usepackage{multirow}
\usepackage{bigstrut}

\usepackage{float}

\usepackage{bm}
\usepackage{bbm}

\usepackage{ulem}
\usepackage{url}

\allowdisplaybreaks[4]
\usepackage{times}
\usepackage{latexsym}
\usepackage{booktabs}
\usepackage{mathtools}

\usepackage[colorlinks,pagebackref]{hyperref}
\usepackage{xcolor}
\usepackage{subfigure}
\usepackage{caption}
\usepackage{placeins}
\usepackage{booktabs}
\usepackage{multirow}

\newcommand{\R}{\mathbb{R}}

\newcommand{\sphere}{\mathbb{S}}

\newcommand{\norm}[1]{\left\lVert #1 \right\rVert}

\def\<{\langle}
\def\>{\rangle}

\def\beq{\begin{equation}}
\renewcommand{\hat}{\widehat}
\renewcommand{\tilde}{\widetilde}

\definecolor{wco}{rgb}{0.5,0.2,0.3}

\setlength\marginparwidth{60pt}

\newtheorem{thm}{Theorem}[section]

\newtheorem{lem}[thm]{Lemma}

\newtheorem{rem}[thm]{Remark}
\newtheorem{assu}{Assumption}
\theoremstyle{definition}

\numberwithin{equation}{section}



\title{Tuning free Catoni type joint robust estimation}

\author[X. Li]{Xiang Li}
\address{Department of Statistics and Data Science, Southern University of Science and Technology}
\email{lixiang3@sustech.edu.cn}

\author[J. S. Liu]{Jun S. Liu}
\address{Department of Statistics and Data Science, Tsinghua University}
\email{junsliu@tsinghua.edu.cn}

\author[Q. Sun]{Qiang Sun}
\address{Department of Statistical Sciences, University of Toronto}
\email{qiang.sun@utoronto.ca}

\author[L. Xu]{Lihu Xu}
\address{Department of Mathematics, University of Macau}
\email{lihuxu@um.edu.mo}


\subjclass{Primary 62G35, 62F35; Secondary 62J07, 62H12}

\keywords{Joint robust estimation, heavy tailed data, Catoni type estimation, tuning free property, non-asymptotic bounds}

\begin{document}
\begin{abstract}
    This paper develops a Catoni-type joint (tuning-free) estimation framework for parametric models with heavy-tailed noise, in which the target parameter and the unknown noise variance are estimated simultaneously through a system of two coupled Catoni-type estimating equations. We instantiate the framework in three canonical settings: mean estimation, linear regression, and $\ell_{2}$-penalized regression. 

Theoretically, we establish non-asymptotic, sub-Gaussian-type deviation bounds that hold jointly for the target parameter and the variance estimator, under only a finite $2\beta$-th moment assumption with $\beta\in (1,2]$. The resulting rates match — up to absolute constants — those of oracle procedures that know the variance in advance, thereby attaining optimality in the heavy-tailed regime. 

Methodologically, because the coupled equations are intrinsically non-convex and non-linear, classical convex M-estimation arguments are inapplicable. We develop a new analytical toolkit based on the Poincaré–Miranda theorem. The resulting proof strategy is of independent methodological interest, and we expect it to be applicable to a broad class of other statistical problems in which several parameters of heterogeneous nature must be estimated jointly.
\end{abstract}

\maketitle

\setcounter{tocdepth}{2}

\tableofcontents

\section{Introduction}\label{sec:intro}

An extensive and still-expanding body of empirical evidence documents that many real-world datasets   {deviate markedly from Gaussian and sub-Gaussian behaviour} and instead exhibit pronounced heavy tails, by which we mean distributions whose survival functions decay only polynomially and whose higher-order moments may fail to exist. Representative examples include financial returns and insurance losses \cite{FR03}, degree distributions of large-scale complex networks \cite{EoJo15}, and high-throughput gene-expression measurements \cite{WPL15}. In contrast with the sub-Gaussian case, heavy-tailed data frequently contain a non-trivial proportion of outliers---observations of magnitude orders larger than the bulk of the sample---which seriously degrades the finite-sample performance of classical procedures such as the sample mean or the ordinary-least-squares estimator. These empirical findings have motivated a sustained and rapidly growing research effort devoted to the development of   {robust methodologies} that retain essentially sub-Gaussian concentration guarantees even when the underlying data are themselves non-sub-Gaussian; see, for example, \cite{Ca12,SZF20}.

Robust methods provide a foundational toolkit for the principled handling of outliers, and are therefore versatile across diverse data structures, statistical models, and application domains. Their scope is by no means restricted to the classical setting of independent and identically distributed observations: robust techniques have been successfully adapted to a variety of   {dependent data regimes}, including autoregressive, exponential-smoothing, wavelet-based, and additive time-series models \cite{CMA94,CGM10,GMVX22,WLXXZ24,WZHCT21}. Robustness principles also extend far beyond simple parametric models: they have proved effective in   {high-dimensional regression} under sparsity \cite{FLW17,LM19} and in   {matrix- and tensor-valued estimation problems} \cite{AY22,FWZ21,KMRSZ19,Mi18}.

As a consequence, robust estimation has been broadly adopted in numerous fields, ranging from machine learning \cite{BGS22,FWZZ21,LL20,PKH22} to finance and econometrics \cite{FWZ19,Qu21}. For a comprehensive treatment of the theoretical foundations and methodological developments in robust statistics we refer the reader to the surveys and monographs of \cite{LM19-2,KMRSZ19}.

\subsection{Huber and Catoni estimations}\label{subsec:lit}

The modern theory of robust estimation traces its origin to the seminal contributions of   {Huber in the 1960s} \cite{Hu64,Hu73}. In place of the classical quadratic loss $x^{2}/2$, Huber proposed the   {piecewise quadratic--linear loss} $\ell_{\tau}$ defined by
\begin{equation}\label{eq:huber-loss}
\ell_{\tau}(x) \;:=\;
\begin{cases}
\frac{1}{2}\,x^{2}, & |x|\le \tau, \\[2pt]
\tau |x| - \dfrac{\tau^{2}}{2}, & |x|>\tau,
\end{cases}
\end{equation}
where $\tau>0$ is a   {robustification (tuning) parameter}. The function $\ell_{\tau}$ coincides with one-half the squared loss on the central interval $[-\tau,\tau]$ and degenerates to an affine absolute-value loss outside that interval; it is known as   {Huber's loss}, and the associated $M$-estimation procedure is referred to as   {Huber's estimation}.

The design philosophy underlying Huber's loss is to   {cap the influence of any observation whose magnitude exceeds the cutoff} $\tau$: inside the central region the estimator behaves like the ordinary least-squares estimator, whereas outside that region the score function saturates and the observation contributes only through its sign and a bounded factor. The choice of $\tau$ is therefore critical for balancing bias against robustness. A   {large} value of $\tau$ preserves more information from extreme observations and hence yields small bias but weak protection against outliers; conversely, a   {small} value of $\tau$ suppresses the tail information and thereby enhances robustness at the cost of increased bias. The asymptotic distributional theory of the Huber estimator has been comprehensively developed \cite{Hu73,Ma89,YM79}. For the calibration of $\tau$, Huber and Ronchetti \cite{HR11} suggested the heuristic rule $\tau = 1.345\,\sigma$, which yields $95\%$ asymptotic efficiency at the Gaussian model, with $\sigma$ denoting the error standard deviation. More recently, however, Sun et al. \cite{SZF20} demonstrated through a careful non-asymptotic analysis that, in the context of linear regression,  {optimal finite-sample performance requires $\tau$ to be chosen adaptively} as a function of the sample size $n$, the ambient dimension $d$, and the moments of the error distribution. Further contributions on tuning and extensions of Huber-type estimation can be found in \cite{AVP20,JP22,PSBR20,WZHCT21,ZBFL18}.

In a seminal contribution, Catoni \cite{Ca12} initiated a   {non-asymptotic deviation-theoretic} study of robust mean estimation for heavy-tailed data. His analysis revealed, for the first time, that one can obtain sub-Gaussian-type deviation guarantees under only a finite-variance assumption on the sampling distribution. More precisely, Catoni considered a class of non-decreasing influence functions $\psi_{1}$ satisfying the two-sided logarithmic envelope
\begin{equation}\label{catoni c}
-\log\!\left(1 - x + \frac{|x|^{2}}{2}\right)
\;\le\; \psi_{1}(x) \;\le\;
\log\!\left(1 + x + \frac{|x|^{2}}{2}\right),
\end{equation}
and defined his   {robust mean estimator} $\widehat{\theta}$ implicitly as the root of the estimating equation
\begin{equation}\label{catoni}
\sum_{i=1}^{n} \psi_{1}\!\bigl(\alpha (X_{i}-\theta)\bigr) \;=\; 0,
\end{equation}
where $\alpha$ is a robustification parameter calibrated as $\alpha \propto \sigma^{-1}\sqrt{\log(\epsilon^{-1})/n}$. This parameter plays the same role in the Catoni construction as $\tau$ does in Huber's loss, in that it controls the bias--robustness trade-off. According to \cite[Proposition 2.4]{Ca12}, the resulting $(1-\epsilon)$-confidence interval has length of order $O\!\left(\sigma\sqrt{\log(\epsilon^{-1})/n}\right)$, which   {matches the optimal sub-Gaussian rate} achieved by the ordinary sample mean under genuinely light-tailed data.

Compared with Huber's estimator, Catoni's construction \eqref{catoni} is considerably more flexible, since the entire family of functions $\psi_{1}$ satisfying \eqref{catoni c} is admissible. At one extreme, the   {narrowest} (tightest) admissible choice,  
\begin{equation} \label{e:ExpPsi1}
\psi_{1}(x) =
\begin{cases}
-\operatorname{sign}(x)\,\log\!\left(1-|x|+\dfrac{x^{2}}{2}\right), & |x|\le 1,\\[2pt]
\operatorname{sign}(x)\,\log 2, & |x|>1,
\end{cases}
\end{equation}
produces an estimator that behaves qualitatively like Huber's, in that it effectively truncates the contribution of extreme observations. At the opposite extreme, the   {widest} admissible choice,
\begin{equation} \label{eq:widestcatoni}
\psi_{1}(x) =
\begin{cases}
\log\!\left(1+x+\dfrac{x^{2}}{2}\right), & x\ge 0,\\[2pt]
-\log\!\left(1-x+\dfrac{x^{2}}{2}\right), & x<0,
\end{cases}
\end{equation}
preserves information from large-magnitude observations but only on a   {logarithmic scale}, so that outliers still influence the estimator without overwhelming it. This flexibility allows the practitioner to select a $\psi_{1}$ lying anywhere in this spectrum, tailored to the tail heaviness of the data at hand.

Catoni's framework has been widely adopted in a broad array of models and data scenarios (see, e.g., \cite{BCL13,FLW18,FKSZ19,JSF18,Mi18,SZF20,WLXXZ24,WR23}). For distributions possessing only a finite $\beta$-th moment with $\beta\in(1,2)$, Chen et al. \cite{CJLX21} extended Catoni's construction by replacing the quadratic envelope \eqref{catoni c} with the $\beta$-power envelope
\begin{equation} \label{catoni c2}
-\log\!\left(1 - x + \frac{|x|^{\beta}}{\beta}\right)
\;\le\; \psi_{2}(x) \;\le\;
\log\!\left(1 + x + \frac{|x|^{\beta}}{\beta}\right),
\end{equation}
thereby making Catoni-type estimators available under substantially weaker moment assumptions than the classical finite-variance case.

\subsection{Tuning-free estimation via coupled estimating equations}
\label{sec:tuning-free-review}
A common thread runs through several recent advances in robust and
high-dimensional statistics: rather than selecting a tuning parameter by an
external rule (cross-validation, plug-in scale estimates, theoretical
constants depending on unknown nuisance quantities), one designs a
{system of two coupled estimating equations} whose joint solution
returns both the target parameter and the nuisance scale at once. The
resulting estimators are tuning-free in the sense that no scale-dependent
tuning has to be chosen by the user; the data themselves determine the
scale through one of the two equations.

\textit{High-dimensional regression: scaled and square-root Lasso.}
The first influential instance of this paradigm in modern high-dimensional
statistics is the scaled Lasso of Sun and Zhang~\cite{SZ12} in 2012, who introduced
a jointly convex penalized loss in the regression coefficient vector
$\beta\in\mathbb{R}^{p}$ and the noise scale $\sigma>0$. Its minimizer
$(\hat\beta,\hat\sigma)$ is characterized by the coupled
Karush--Kuhn--Tucker system
\[
  \frac{\partial}{\partial\beta}\,L(\beta,\sigma) \;=\; 0,
  \qquad
  \frac{\partial}{\partial\sigma}\,L(\beta,\sigma) \;=\; 0,
\]
and the scale equation in $\sigma$ replaces the usual variance-dependent
tuning of the Lasso penalty. The closely related square-root Lasso of
Belloni et al. ~\cite{BCL11} achieves the same goal by
penalizing the root mean squared residual; algorithmically distinct, it is
again the joint solution of a two-equation system in $(\beta,\sigma)$.
Subsequent developments along this line, including
\cite{BMGS19,JXX19,Mol22}, refine the analysis, broaden the loss class, and
extend the framework to structured-sparsity and generalized-linear models,
but all retain the same two-equation backbone.

\textit{Heavy-tailed mean estimation}.
Lee and Valiant~\cite{LeVa22-0} construct an estimator of the
mean of a real-valued distribution that, given only finite variance
$\sigma^{2}$, achieves
$|\hat\mu-\mu|
  \;\le\;
  \sigma\sqrt{\tfrac{2\ln(1/\delta)}{n}}\,(1+o(1))$
with probability at least $1-\delta$, matching the Cram\'er--Chernoff
benchmark down to the leading constant. Their construction is a weighted
trimmed mean in which the trimming budget is fixed and the weights solve
\begin{equation} \label{e:LeeVal}
  \sum_{i=1}^{n}\min\!\bigl\{\,1,\;\lambda\,(X_{i}-\mu_{0})^{2}\,\bigr\}
  \;=\;\tfrac{1}{3}\log(1/\delta),
  \qquad
  \sum_{i=1}^{n} w_{i}\,(X_{i}-\hat\mu) \;=\; 0,
\end{equation}
where $X_{1},\dots,X_{n}$ is the given observations and the first equation determines the data-adaptive scale $\lambda$ and
the second is a weighted-mean condition on $\hat\mu$. The follow-up
\cite{LeVa22-1} extends the program to $\mathbb{R}^{d}$, again replacing
explicit knowledge of the covariance by a coupled scale-and-location
selection step. Crucially, both papers estimate the mean {without}
first tuning to the variance: the second equation in the pair plays the
role that an external estimate of $\sigma^{2}$ would otherwise have.

Sun~\cite{Su21} proposed a tuning-free method based on a new pseudo-Huber loss that enables
{simultaneous} estimation of the mean $\mu$ and the variance
$\sigma^{2}$. Sun considered the
minimization problem
\begin{equation}\label{e:PsMin}
  (\hat\mu,\hat v)
  \;=\;
  \mathop{\mathrm{argmin}}_{\mu,v}\,L(\mu,v),
  \qquad
  L(\mu,v)=\sum_{i=1}^{n}\!\left[\psi_{\mathrm{ps}}\!\left(\frac{\alpha(X_{i}-\mu)}{v}\right)v+\lambda v\right],
\end{equation}
where $\psi_{\mathrm{ps}}(x)=\sqrt{1+x^{2}}-1$ is the pseudo-Huber loss and
$\alpha>0$, $\lambda>0$ are adjustment factors. The pair $(\hat\mu,\hat v)$
is recovered as the joint solution of the first-order conditions
\[
  \frac{\partial}{\partial\mu}\,L(\mu,v) \;=\; 0,
  \qquad
  \frac{\partial}{\partial v}\,L(\mu,v) \;=\; 0,
\]
and serves as a tuning-free joint estimator of $(\mu,\sigma)$.

\textit{Heavy-tailed simultaneous estimation via Huber-type splitting.}
An alternative tuning-free proposal is due to Wang et al. ~\cite{WZZZ21}, who introduced a new Huber-type estimator of the
robustification parameter $\tau$ under the assumption that the location
parameter $\mu$ is known. Since $\tau$ is, up to constants depending on
$(n,d)$, proportional to $\sigma$, estimating $\tau$ is equivalent to
estimating $\sigma^{2}$. Building on this
construction and on the adaptive Huber mean estimator of Sun et al. ~\cite{SZF20}, they developed a {splitting algorithm} that
alternates between updating $\tau$ and $\mu$. Intuitively, the output of
this iterative scheme converges to a solution of a coupled system of
estimating equations in $(\mu,\tau)$: 
\begin{equation} \label{e:WZZZ}
\sum_{i=1}^n \psi_\tau(X_i - \theta) = 0, \quad \quad  \sum_{i=1}^n \min\{(X_i - \theta)^2, \tau^2\}/\tau^2 - z = 0, 
\end{equation}
 where $z$ can be explicitly represented by $n$ and the error tolerance $\epsilon$. 
The
same idea was extended to the linear regression setting. However, the
convergence of the splitting algorithm itself was not rigorously
established therein, and no non-asymptotic guarantees were provided for the
joint estimator. Filling this gap---establishing convergence of the
$(\mu,\tau)$ iterates and proving high-probability deviation bounds on
their joint limit---remains open.

Across these three threads, the unifying methodological principle is
the same: encode the scale tuning as a second estimating equation, solve
the resulting two-equation system jointly. The remaining technical questions in this literature are largely
about the joint analysis (existence and non-asymptotic deviation control of the coupled solution), are notoriously difficult in many cases.

\subsection{Our contributions and methods}\label{subsec:contrib}

This paper proposes a   {Catoni-type joint robust estimation} procedure for three parametric statistical models under heavy-tailed data: univariate mean estimation, classical linear regression, and $\ell_{2}$-penalized linear regression. In each model, both the trend parameter (denoted generically by $\theta$) and the error variance $\sigma^{2}$ are unknown and are estimated jointly. Our approach is based on a   {system of two coupled Catoni-type estimating equations}---one identifying the trend and the other identifying the variance (see Section \ref{Sec:2} for details). The main contributions of the paper can be summarized as follows.

\smallskip
\noindent\textbf{(i) A general framework for coupled joint (tuning-free) estimating equations.}\enspace
 Our framework provides a substantially broader formulation than existing tuning-free proposals \cite{LeVa22-0,Su21,SZ12,WZZZ21}. The methods of \cite{Su21,SZ12} proceed by minimizing a scalar loss function $L(\bm\theta,v)$ in which $\bm\theta$ estimates the target parameter and $v^{2}$ approximates the variance; since $L$ is differentiable, the first-order condition
\begin{equation}\label{eq:grad-L}
\nabla L(\bm\theta,v) \;=\; 0
\end{equation}
 where $\nabla$ denotes the joint gradient with respect to $(\bm\theta,v)$. Constructing such a scalar loss, however, is highly restrictive: the two parameters of heterogeneous nature must enter $L$ in a tightly coupled form, which restricts the possibility of applying different estimation strategies to parameters of different nature. In contrast, our two coupled equations are not required to be the gradient of any scalar loss, which substantially enlarges the admissible design space. Equally importantly, the framework imposes minimal structural restrictions on the influence function $\psi$: although we instantiate it with Catoni-type functions in this paper, the proposed methodology and the underlying analytical strategy carry over with only minor modifications to Huber-type and other Huber-like influence functions used in \cite{Su21,WZZZ21}. Moreover, our framework imposes no special structural assumptions on the estimator, making it applicable to a wider variety of models. In contrast, the method in \cite{LeVa22-0} exploits the affine invariance of the median-of-means construction, which limits its applicability beyond the mean estimation setting.

\smallskip
\noindent\textbf{(ii) Joint robust estimation under heavy-tailed noise.}\enspace
 Our framework simultaneously delivers robust estimates of both the target parameter and the noise variance under heavy-tailed data. Specifically, under a finite $2\beta$-th moment assumption with $\beta\in(1,2]$, we establish non-asymptotic, sub-Gaussian-type high-probability error bounds that hold jointly for the target parameter and the variance estimator, attaining the optimal rates—matching, up to absolute constants, those available to oracle procedures that know the variance in advance (see the main Theorems in Section \ref{Sec:2} for details). 

\smallskip
\noindent\textbf{(iii) A new analytical machinery via the Poincar\'e--Miranda theorem.}\enspace
We rigorously establish the   {consistency} of all three Catoni-type joint robust estimators using a novel analytical strategy. Because the coupled estimating equations are highly non-linear in the parameters, the classical approach that relies on convex $M$-estimation is unavailable. To circumvent this obstacle we invoke the   {Poincar\'e--Miranda theorem} \cite{Fr18}---a multidimensional generalization of the Bolzano intermediate-value theorem---to certify that any solution of the system must lie inside an   {explicit geometric region} (a cylinder or a cone) centred at the true parameter value. The verification that the Poincar\'e--Miranda hypotheses are satisfied with high probability is itself a delicate probabilistic construction that combines (a) the derivation of auxiliary algebraic comparison inequalities, (b) a refined coupling and comparison framework, and (c) sharp supremum bounds along the boundaries of the geometric regions. The resulting machinery provides a   {feasible and broadly applicable framework for the non-asymptotic analysis of coupled estimation problems}, and we expect it to be useful in many other statistical contexts.

\smallskip
The remainder of the paper is organized as follows. Section~2 introduces the Catoni-type joint robust estimators for the three settings---mean estimation, linear regression, and ridge-penalized linear regression in high dimensions---and states the corresponding non-asymptotic consistency theorems. Section~3 reports extensive simulation studies and two real-data analyses. Our numerical experiments show that the proposed Catoni-type joint robust estimators   {consistently outperform} benchmark robust procedures, including the classical Catoni estimator and the adaptive Huber estimator of \cite{SZF20}, and that this performance gap   {widens as the heaviness of the tails increases}. 
The detailed proofs of the main results in Section \ref{Sec:2} are deferred to the appendices. Appendix A collects two auxiliary lemmas: an algebraic comparison lemma used to characterize the regimes of solutions to the proposed estimators, and a maximal inequality for bounding exponential moments. Appendices \ref{Sec:AppB}–\ref{Sec:AppD} contain the proofs of Theorems \ref{thm:joint theorem}–\ref{Thm:jointly ridge}, together with outlines of their proof strategies.

\subsection{Notation}\label{subsec:notation}

We collect here the notational conventions that will be used throughout the paper. For $p\in[1,\infty]$ and a vector $\bm\theta=(\theta_{1},\dots,\theta_{d})\in\R^{d}$, the   {$\ell_{p}$-norm} of $\bm\theta$ is defined as
\begin{equation*}
\norm{\bm\theta}_{p} \;:=\; \left(\sum_{i=1}^{d} |\theta_{i}|^{p}\right)^{1/p}
\quad \text{for } p\in[1,\infty),
\qquad
\norm{\bm\theta}_{\infty} \;:=\; \max_{1\le i\le d} |\theta_{i}|.
\end{equation*}
We write $\sphere^{d-1} := \{\bm\theta\in\R^{d}:\norm{\bm\theta}_{2}=1\}$ for the Euclidean unit sphere in $\R^{d}$. The symbol $C(\cdot)$ will be used generically for a positive constant whose value may change from line to line and that depends only on the quantities listed as its arguments.

{\bf Acknowledgement:} We would like to thank Zhijun Cai for his careful reading of early drafts and for providing numerous helpful comments and corrections.


\section{Catoni type Joint robust estimations} \label{Sec:2}

In this section, we introduce three Catoni type joint robust estimation methods: joint robust mean–variance estimation, joint robust regression, and  joint robust ridge regression, presented in Sections \ref{ss:2-1}–\ref{ss:2-3}, respectively.

\subsection{Catoni type Joint robust mean and variance estimation} \label{ss:2-1}

Let $X_{1},\dots,X_{n}$ be i.i.d. random variables drawn from an unknown distribution $\mathbb{P}$ on $\mathds{R}$ satisfying
\begin{align} \label{Xmoment}
    \mathbb{E}[X_{i}]=\mu,\quad \mathbb{E}|X_{i}-\mu|^{2}=\sigma^{2} \quad \text{and} \quad \mathbb{E}|X_{i}-\mu|^{2\beta}=m_{2\beta}<\infty,
\end{align}
for some $\beta \in (1,2]$. Our goal is to estimate $\mu$ and $\sigma$ simultaneously.

For a given confidence level $\epsilon\in (0,\tfrac{1}{6})$, we consider the following coupled equations for $(\theta, v)$:
\begin{align} \label{f1f2}
    \begin{cases} &f_{1}(\theta,v)=\frac{1}{n}\sum_{i=1}^{n} \psi_{1}\left(\alpha_{1} \frac{(X_{i}-\theta)}{v}  \right)=0,  \\ &f_{2}(\theta,v)=\frac{1}{n}\sum_{i=1}^{n} \psi_{2}\left(\alpha_{2} \left(\frac{(X_{i}-\theta)^{2}}{v^{2}}-1 \right)\right)=0
    \end{cases},
\end{align}
where
\begin{align} \label{def:alpha1,2}
    \alpha_{1}:=\sqrt{\frac{2\log\left(\epsilon^{-1}\right)}{n}},\quad \alpha_{2}:=\left(\frac{\log(\epsilon^{-1})}{n}\right)^{\frac{1}{\beta}},
\end{align}
and $\psi_{1},\psi_{2}$ are continuous non-decreasing functions satisfying \eqref{catoni c} and \eqref{catoni c2}, each with a root at $0$.
If we take $\psi_1(x)=x$ and $\psi_2(x)=x$, then the corresponding equations in \eqref{f1f2} become the well-known moment equations and immediately yield the classical sample mean and variance estimators. Conditions in \eqref{catoni c} and \eqref{catoni c2}   for the two functions can help reduce the effects of the data far from the true mean, while $\alpha_1$ and $\alpha_2$ determine a scale for such reduction.

We show that \eqref{f1f2} admits a solution $(\hat \theta, \hat v)$ that consistently estimates $(\mu, \sigma)$. It is evident that there is no loss function $F(\theta,v)$ such that
\[
\nabla F(\theta,v) = [f_1(\theta,v), f_2(\theta,v)]',
\]
that is, $[f_1(\theta,v), f_2(\theta,v)]'$ is not the gradient of any function. Thus, \eqref{f1f2} cannot be solved by minimizing a loss function. Instead, we develop a delicate comparison argument to establish that \eqref{f1f2} admits a solution converging to the true value $(\mu, \sigma)$.

\begin{assu} \label{assumption:c}
    Assume that $\psi_{1}$ is twice differentiable except at finitely many points, and its second derivative satisfies $|\psi_{1}''(x)|\le M\min\{1,1/x^{2}\}$ wherever it exists for some positive constant $M$. 
\end{assu}
\begin{rem} 
    Typical examples satisfying the above assumption include the widest and narrowest Catoni influence functions (see \eqref{eq:widestcatoni} and \eqref{e:ExpPsi1}), as well as the derivative of the Huber loss. These functions are non-smooth at no more than two points. This assumption is primarily imposed to ensure the accuracy of the estimation of $\theta$, and we do not place any further smoothness restrictions on $\psi_{2}$, which is used for variance estimation.
\end{rem}

\begin{thm} \label{thm:joint theorem}
Suppose Assumption \ref{assumption:c} holds. Consider the joint estimation \eqref{f1f2} with any confidence level $\epsilon\in (0, {1}/{6})$. There exists a constant $C=C(m_{2\beta},\sigma,\beta)$ such that, for all $n \ge C[\log(\epsilon^{-1})+1]$, \eqref{f1f2} admits a solution $(\hat{\theta},\hat{v})$ satisfying, with probability at least $1-6\epsilon$:
\begin{align*}
\big|\hat{\theta}-\mu \big|\le \sigma \left(1+c_{1}\left(\tfrac{\log(\epsilon^{-1})}{n} \right)^{\tfrac{\beta-1}{\beta}}\right) \sqrt{\tfrac{2\log(\epsilon^{-1})}{n}},
\end{align*}
and
\begin{align*}
\left(1+c_{0}\left(\tfrac{\log(\epsilon^{-1})}{n} \right)^{\tfrac{\beta-1}{\beta}} \right)^{-\tfrac{1}{2}} \sigma
\le \hat{v}\le
\left(1-c_{0}\left(\tfrac{\log(\epsilon^{-1})}{n} \right)^{\tfrac{\beta-1}{\beta}} \right)^{-\tfrac{1}{2}} \sigma,
\end{align*}
where $c_{0}=c_{0}(m_{2\beta},\sigma,\beta)$ and $c_{1}=c_{1}(M,\sigma)$ do not depend on $n$ or $\epsilon$.
\end{thm}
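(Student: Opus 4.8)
The plan is to apply the Poincar\'e--Miranda Theorem to the map $(\theta,v)\mapsto (f_1(\theta,v),f_2(\theta,v))$ on a carefully chosen rectangle (a ``cylinder'' in the language of the abstract) $R=[\mu-r_1,\mu+r_1]\times[v_-,v_+]$, where $r_1=6\sigma\sqrt{\log(n^{1/\beta-1/2}\epsilon^{-1})/n}$ and $v_\pm=(1\mp c_0(\log(\epsilon^{-1})/n)^{(\beta-1)/\beta})^{-1/2}\sigma$ are exactly the bounds appearing in the statement. Poincar\'e--Miranda then guarantees a zero of the vector field inside $R$ provided one can check the sign conditions on the four faces: $f_1$ has opposite signs on the two faces $\theta=\mu\pm r_1$ (uniformly in $v\in[v_-,v_+]$), and $f_2$ has opposite signs on the two faces $v=v_\mp$ (uniformly in $\theta\in[\mu-r_1,\mu+r_1]$). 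The first step is therefore to record the monotonicity structure: since $\psi_1$ is non-decreasing, $f_1(\theta,v)$ is non-increasing in $\theta$ for fixed $v>0$; since $\psi_2$ is non-decreasing, $f_2(\theta,v)$ is non-increasing in $v$ for fixed $\theta$. This reduces the boundary analysis to showing $f_1(\mu+r_1,v)\le 0\le f_1(\mu-r_1,v)$ and $f_2(\theta,v_+)\le 0\le f_2(\theta,v_-)$ with high probability.

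The second step is the probabilistic heart of the argument: controlling $f_1$ and $f_2$ at the candidate boundary values. For $f_1$, I would use the Catoni-type exponential bound: from the upper bound in \eqref{catoni c}, $\psi_1(x)\le \log(1+x+x^2/2)$, so $\mathbb E\exp(n f_1(\theta,v)\,/\,\text{(normalizing)})$ can be bounded by $\prod_i(1+\alpha_1\mathbb E[(X_i-\theta)/v]+\tfrac{\alpha_1^2}{2}\mathbb E[(X_i-\theta)^2/v^2])\le \exp(n(\alpha_1(\mu-\theta)/v+\tfrac{\alpha_1^2}{2}(\sigma^2+(\mu-\theta)^2)/v^2))$. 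A Chernoff/Markov step then yields that, with probability $\ge 1-\epsilon$, $f_1(\theta,v)\le \alpha_1(\mu-\theta)/v+\tfrac{\alpha_1^2}{2}(\sigma^2+(\mu-\theta)^2)/v^2+\tfrac{\log\epsilon^{-1}}{n\alpha_1 v}\cdot(\text{const})$; plugging $\theta=\mu+r_1$ and using $v\le v_+\asymp\sigma$ makes the dominant term $-\alpha_1 r_1/v<0$ and one checks it beats the $O(\alpha_1^2)$ and $O(\log\epsilon^{-1}/(n\alpha_1))\asymp O(\alpha_1)$ remainders because $r_1$ carries the factor $6$. The symmetric lower bound at $\theta=\mu-r_1$ comes from the lower bound in \eqref{catoni c}. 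For $f_2$, the same scheme with \eqref{catoni c2} applies to the variable $Y_i=(X_i-\theta)^2/v^2-1$: one needs $\mathbb E|Y_i|^\beta<\infty$, which follows from $\mathbb E|X_i-\mu|^{2\beta}=m_{2\beta}<\infty$ together with $|\mu-\theta|\le r_1$ small and $v$ bounded below; then $\mathbb E\exp(\beta\text{-power Catoni bound})$ is controlled by $1+\alpha_2\mathbb E Y_i+\tfrac{\alpha_2^{\beta}}{\beta}\mathbb E|Y_i|^{\beta}$, and $\mathbb E Y_i=\sigma^2/v^2-1+(\mu-\theta)^2/v^2$, which is negative when $v=v_+$ and positive when $v=v_-$ by the very definition of $v_\pm$ and a suitable $c_0$. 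Because the rectangle is two-dimensional and we need these inequalities to hold \emph{simultaneously for all $(\theta,v)$ along each face}, the supremum over the face must be handled by a uniform (maximal-inequality) argument rather than a single union bound — this is presumably what Lemma in Appendix A is for — and this is where I expect the main obstacle to lie: establishing the Catoni-type deviation bound \emph{uniformly} over a continuum of shifts $\theta$ (resp.\ scales $v$) while keeping the failure probability at $O(\epsilon)$ and not losing the sharp $\sqrt{\log\epsilon^{-1}}$ scaling. I would resolve it by exploiting monotonicity (so that controlling the two endpoints of each face, plus the fact that $f_1,f_2$ are monotone in the transverse variable, essentially suffices) combined with a discretization of the remaining variable at scale comparable to $\alpha_1,\alpha_2$, absorbing the net into constants via the Lipschitz behavior of $\psi_1,\psi_2$.

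The third step assembles the pieces: intersecting the (at most four) exceptional events, each of probability $\le\epsilon$, gives the sign conditions on $\partial R$ with probability $\ge 1-4\epsilon$; Poincar\'e--Miranda then produces $(\hat\theta,\hat v)\in R$ with $f_1(\hat\theta,\hat v)=f_2(\hat\theta,\hat v)=0$, and membership in $R$ is exactly the pair of bounds claimed in the theorem. The sample-size requirement $n\ge C[\log(\epsilon^{-1})+1]$ enters to guarantee (a) the quantities $\alpha_1,\alpha_2$ are small enough that $v_\pm$ are well-defined positive numbers with $v_-\le v_+$, i.e. $c_0(\log\epsilon^{-1}/n)^{(\beta-1)/\beta}<1$, and (b) the remainder terms in the Chernoff bounds are genuinely dominated by the main linear-in-$r_1$ terms, so the sign conditions are strict. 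A minor technical point I would address up front is that Poincar\'e--Miranda requires continuity of $(f_1,f_2)$ on $R$, which holds since $\psi_1,\psi_2$ are continuous and $v$ is bounded away from $0$ on $R$; and I would note that although $[f_1,f_2]'$ is not a gradient field, this is irrelevant to the fixed-point/sign argument, which is precisely the methodological point the paper is making.
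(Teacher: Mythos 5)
Your proposal is correct and follows essentially the same route as the paper: Poincar\'e--Miranda on exactly the rectangle $[\mu-6\sigma\alpha_1,\mu+6\sigma\alpha_1]\times[v_0,V_0]$, sign conditions on the four faces verified via Catoni-type exponential-moment bounds plus Markov, and a union bound over four events each of probability at most $\epsilon$. One small correction to your uniformity argument: monotonicity of $f_1$ in $\theta$ and of $f_2$ in $v$ is monotonicity in the variable that is \emph{fixed} on the relevant face, so it does not control the supremum \emph{along} the face (e.g.\ $f_1(\mu+\theta_0,\cdot)$ is not monotone in $v$, since terms with $X_i>\theta$ and $X_i<\theta$ move in opposite directions as $v$ varies); what the paper uses that monotonicity for is only to transfer the sign condition from the auxiliary root $\theta_\pm$ (resp.\ $v_\pm$) to the actual face. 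The supremum over $v$ on the $\theta$-faces is handled, as you anticipate, by partitioning $[v_0,V_0]$ into $n_0\asymp n^{1/\beta-1/2}$ subintervals and pushing the supremum inside the exponential moment on each piece (this is the source of the $\log(n^{1/\beta-1/2}\epsilon^{-1})$ in $\alpha_1$), while the supremum over $\theta$ on the $v$-faces is absorbed directly into the moment bound via $(X-\theta)^2\le (X-\mu)^2+\theta_0^2+2|X-\mu|\theta_0$, with no discretization needed.
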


The theorem above shows that our approach can estimate both the mean and variance with only logarithmic dependence on $1/\epsilon$, a property unattainable by non-robust methods such as the sample mean when only a $2\beta$-moment exists. Furthermore, our result demonstrates that the variance is estimated at a rate depending on the availability of higher moments, thereby exhibiting a form of adaptivity in variance estimation too. In contrast, although \cite{Su21} requires only a weaker second-moment condition, it does not provide variance estimation at any predictable rate.

The proof of Theorem \ref{thm:joint theorem} is provided in Appendix \ref{Sec:AppB}.

\subsection{Catoni type Joint robust regression} \label{ss:2-2}

Consider i.i.d. data samples $(y_{1},\boldsymbol{x}_{1}),\dots,(y_{n},\boldsymbol{x}_{n})\in \mathds{R}\times \mathds{R}^{d}$ following the linear model
\begin{align} \label{linear model}
    y_{i}=\<\boldsymbol{x}_{i},\boldsymbol{\theta}^{*}  \>+\varepsilon_{i},
\end{align}
where $\varepsilon_{1},\dots,\varepsilon_{n}$ are i.i.d. noise terms satisfying
\begin{align} \label{noise2}
    \mathbb{E}[\varepsilon_{i}|\boldsymbol{x}_i]=0,\quad \mathbb{E}[\varepsilon_{i}^{2}|\boldsymbol{x}_i]=\sigma^{2}\quad  \text{and} \quad m_{2\beta}=\mathbb{E}\left[|\varepsilon_{i}|^{2\beta}|\boldsymbol{x}_i\right]<\infty,\quad \text{for some}\quad \beta\in (1,2],
\end{align}
and $\boldsymbol{\theta}^{*}\in \mathds{R}^{d}$ denotes the true regression coefficient. Let
\begin{equation} \label{e:LMax}
 L=\max_{1 \le i \le n} \|\boldsymbol{x}_i\|_2<\infty.
\end{equation}
Our goal is to estimate $\boldsymbol{\theta}^*$ and $\sigma$ simultaneously.

For a given confidence level $\epsilon\in (0,\frac{1}{6})$, consider the following coupled equations for $\boldsymbol{\theta}$ and $v$:
\begin{align} \label{f1f2regression}
    \begin{cases}
         &\Tilde{f}_{1}(\boldsymbol{\theta},v)=\frac{1}{n}\sum_{i=1}^{n} \boldsymbol{x}_{i}\psi_{1}\left(\alpha_{1} \frac{(y_{i}-\<\boldsymbol{x}_{i},\boldsymbol{\theta}\>)}{v} \right)=\boldsymbol{0},  \\ &\Tilde{f}_{2}(\boldsymbol{\theta},v)=\frac{1}{n }\sum_{i=1}^{n}  \psi_{2}\left(\alpha_{2}\left( \frac{(y_{i}-\<\boldsymbol{x}_{i},\boldsymbol{\theta}\>)^{2}}{v^{2}}-1 \right)\right)=0,
    \end{cases}
\end{align}
where
\begin{align} \label{def:alpha1,2 re}
     \alpha_{1}:=\sqrt{\frac{2\log (d\epsilon^{-1})}{n}},\alpha_{2}:=\left(\frac{\log(\epsilon^{-1})}{n}\right)^{\frac{1}{\beta}},
\end{align}
and $\psi_{1},\psi_{2}$ are non-decreasing continuous functions satisfying \eqref{catoni c} and \eqref{catoni c2}, each with a root at $0$.
 Similar to the intuitive interpretation in Section \ref{ss:2-1}, if we take $\psi_1(x)=x$ and $\psi_2(x)=x$, then the equation \eqref{f1f2regression} immediately yields the classical linear regression. The conditions \eqref{catoni c} and \eqref{catoni c2} and the choices of $\alpha_1$ and $\alpha_2$ reduce the effect of the data far from the true values and thus produce robustness.

We will show that \eqref{f1f2regression} admits a solution $(\hat{\boldsymbol{\theta}}, \hat v)$ that consistently estimates $(\boldsymbol{\theta}^*, \sigma)$. Note that there exists no loss function $\Tilde{F}(\boldsymbol{\theta},v)$ such that $\nabla \Tilde{F}(\boldsymbol{\theta},v)=[\Tilde{f}_1(\boldsymbol{\theta},v), \Tilde{f}_2(\boldsymbol{\theta},v)]'$, so the coupled equations cannot be solved via standard loss minimization. Instead, we employ a delicate comparison argument, which can be of independent  interest.

\begin{assu} \label{A1}
The empirical Gram matrix
\begin{equation} \label{e:GramMat}
\mathbf{S}_{n}:=\frac{1}{n}\sum_{i=1}^{n} \boldsymbol{x}_{i}\boldsymbol{x}_{i}^{T}
\end{equation}
is nonsingular. Moreover, there exist constants $c_{l}, c_{u}>0$ such that
\begin{equation} \label{e:clcuS}
c_l \leq \lambda_{\min}(\mathbf{S}_n) \leq \lambda_{\max}(\mathbf{S}_n) \leq c_u,
\end{equation}
where $\lambda_{\min}\left(\mathbf{S}_n\right)$ and $\lambda_{\max}\left(\mathbf{S}_n\right)$ denote the minimal and maximal eigenvalues of $\mathbf{S}_n$.
\end{assu}

\begin{rem}

    Although Assumption~\ref{A1} is stated in a deterministic form, a high-probability or other random-design variant (e.g., \cite{VBRD14}) does not affect our main results. Since our focus is on heavy-tailed noise, this assumption avoids detailed discussions on spectral properties of $\mathbf{S}_n$ under random designs. For further reference, see \cite{Ve10,Ve18}.
\end{rem}

\begin{thm} \label{thm:jointly main regression}
Consider \eqref{f1f2regression} with confidence level $\epsilon\in (0, {1}/{6})$, and suppose Assumption \ref{assumption:c} and \ref{A1} hold. There exists a constant $C=C(m_{2\beta},\sigma,\beta,c_{l}, c_u, L,M)$ such that, for
$$n \ge C d\left[\log(\epsilon^{-1})+1\right],$$
with probability at least $1-6\epsilon$, \eqref{f1f2regression} admits a solution $(\hat{\boldsymbol{\theta}},\hat{v})$ satisfying
    \begin{align*}
        \|\hat{\boldsymbol{\theta}}-\boldsymbol{\theta}^{*}\|_{2}\le C_{1}\sigma \sqrt{\frac{d\log\left(d\epsilon^{-1} \right)}{n}},
    \end{align*}
    and
    \begin{align*}    \left(1+C_{2}\left(\sqrt{d}\alpha_1+\alpha_2^{\beta-1}\right)\right)^{-\frac{1}{2}} \sigma\le \hat{v}\le \left(1-C_{2}\left(\sqrt{d}\alpha_1+\alpha_2^{\beta-1}\right)\right)^{-\frac{1}{2}} \sigma,
    \end{align*} 
where $C_{1}=C_1(c_{l}, L)$, $C_{2}=C_2(m_{2\beta},\sigma,\beta,c_{l}, c_u, L)$, and note
    \begin{align*}
        \sqrt{d}\alpha_1+\alpha_2^{\beta-1}=\sqrt{\frac{d\log\left(d\epsilon^{-1} \right)}{n}}+\left( \frac{\log(\epsilon^{-1})}{n}\right)^{\frac{\beta-1}{\beta}}.
    \end{align*} 
\end{thm}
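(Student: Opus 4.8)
The plan is to mirror the proof of Theorem~\ref{thm:joint theorem}, but now the trend parameter lives in $\mathds{R}^d$, so the one-dimensional bracketing argument for $f_1$ must be upgraded to a multivariate fixed-point/topological argument. Concretely, I would rescale the problem: write $\boldsymbol{\theta}=\boldsymbol{\theta}^*+\sigma r\,\boldsymbol{u}$ with $\boldsymbol{u}\in\mathbb{S}^{d-1}$ and $r\ge 0$ controlling the radius, and $v=\sigma s$ with $s>0$, and look for a zero of the rescaled map $(\boldsymbol{u},r,s)\mapsto(\mathbf{S}_n^{-1}\Tilde f_1,\Tilde f_2)$ inside a cylinder $\mathcal{C}=\{\|\boldsymbol{\theta}-\boldsymbol{\theta}^*\|_2\le \rho_1\}\times\{|s^2-1|\le \rho_2\}$ with $\rho_1\asymp \sigma\sqrt{d\log(dn^{1/\beta-1/2}\epsilon^{-1})/n}$ and $\rho_2\asymp\sqrt d\alpha_1+\alpha_2^{\beta-1}$. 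The Poincar\'e--Miranda Theorem (applied after identifying the cylinder with a box via the $d$ coordinates of $\boldsymbol{\theta}$ and the extra coordinate $v$) then guarantees a zero provided each component of the vector field has the correct sign on the corresponding pair of opposite faces. So the real work is: (a) on the lateral boundary $\|\boldsymbol{\theta}-\boldsymbol{\theta}^*\|_2=\rho_1$, show $\langle \Tilde f_1(\boldsymbol{\theta},v),\boldsymbol{\theta}-\boldsymbol{\theta}^*\rangle$ has the ``inward'' sign for every admissible $v$; (b) on the two faces $v^2=\sigma^2(1\pm\rho_2)$, show $\Tilde f_2(\boldsymbol{\theta},v)$ is respectively negative and positive for every $\boldsymbol{\theta}\in\mathcal{C}$.

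For step (a) I would use the Catoni bracketing \eqref{catoni c}: since $\psi_1$ is sandwiched between $\pm\log(1\pm x+x^2/2)$ and $\log(1+x)\le x$, the inner product $\langle\Tilde f_1,\boldsymbol{\theta}-\boldsymbol{\theta}^*\rangle$ is bounded above (resp.\ below) by a quadratic-in-deviation expression whose leading term, after taking expectations, is $-\alpha_1\langle(\boldsymbol{\theta}-\boldsymbol{\theta}^*),\mathbf{S}_n(\boldsymbol{\theta}-\boldsymbol{\theta}^*)\rangle/v$, controlled from below by $c_l\alpha_1\rho_1^2/v$ by Assumption~\ref{A1}. The fluctuation term is $\frac{\alpha_1}{n}\sum_i\langle\boldsymbol{x}_i,\boldsymbol{\theta}-\boldsymbol{\theta}^*\rangle\varepsilon_i/v$, a mean-zero average; I would control it uniformly over the sphere by the exponential-moment maximal inequality quoted from Appendix~A together with a standard $\varepsilon$-net on $\mathbb{S}^{d-1}$, picking up the $\sqrt{d}$ and the $\log d$ factors that appear in $\alpha_1$ and in $\rho_1$; the bound $L=\max_i\|\boldsymbol{x}_i\|_2$ enters to guarantee the Catoni/Bernstein-type moment conditions for $\langle\boldsymbol{x}_i,\boldsymbol{u}\rangle\varepsilon_i$. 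Balancing the deterministic quadratic gain against the fluctuation and the $O(\alpha_1^2)$ remainder forces the stated size of $\rho_1$ and the sample-size requirement $n\gtrsim d[\log\epsilon^{-1}+1]$. For step (b) I would apply the second Catoni bracketing \eqref{catoni c2} to $\Tilde f_2$ with argument $\alpha_2((y_i-\langle\boldsymbol{x}_i,\boldsymbol{\theta}\rangle)^2/v^2-1)$; on the face $v^2=\sigma^2(1+\rho_2)$ the expected argument is negative of order $-\rho_2$ (after accounting for the $O(\rho_1^2/\sigma^2)$ bias from $\boldsymbol{\theta}\ne\boldsymbol{\theta}^*$, which is why $\rho_2$ must dominate $d\alpha_1^2$ as well), so $\mathbb{E}\psi_2<0$; the deviation of $\Tilde f_2$ around its mean is handled by the same maximal inequality using the $2\beta$-moment $m_{2\beta}$ of $\varepsilon_i$, and the fact that $(y_i-\langle\boldsymbol{x}_i,\boldsymbol{\theta}\rangle)^2=(\varepsilon_i-\langle\boldsymbol{x}_i,\boldsymbol{\theta}-\boldsymbol{\theta}^*\rangle)^2$ expands into terms each controllable via $L$, $\rho_1$ and the moments; symmetric reasoning gives $\Tilde f_2>0$ on the other face. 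The delicate point is that (a) and (b) must hold \emph{simultaneously} on the product boundary, i.e.\ the sign of $\Tilde f_1$ on the lateral wall must be robust to $v$ ranging over the whole interval $[\sigma^2(1-\rho_2),\sigma^2(1+\rho_2)]$ and vice versa; this is where the explicit appearance of $v$ in the denominators helps, since it only rescales $\alpha_1/v$ by a factor $1+O(\rho_2)$ and does not change signs.

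Having produced a zero $(\hat{\boldsymbol{\theta}},\hat v)$ in $\mathcal{C}$, the stated error bounds are essentially the definition of $\mathcal{C}$: $\|\hat{\boldsymbol{\theta}}-\boldsymbol{\theta}^*\|_2\le\rho_1=C_1\sigma\sqrt{d\log(dn^{1/\beta-1/2}\epsilon^{-1})/n}$, and $|\hat v^2/\sigma^2-1|\le\rho_2=C_2(\sqrt d\alpha_1+\alpha_2^{\beta-1})$, which rearranges to the displayed two-sided bound on $\hat v$ after noting $(1\pm\rho_2)^{-1/2}$ are exactly the endpoints. The total failure probability is split as $6\epsilon$: two $\epsilon$-budgets for the lower/upper deviation bounds on the net-discretized $\Tilde f_1$ on the lateral boundary, two more for the two $\Tilde f_2$ faces, and the remaining $2\epsilon$ for auxiliary events (e.g.\ the maximal-inequality tail and discretization error), matching the constant $6$ versus the $4$ in the mean-only case where there is no sphere net and only one scalar equation for the trend.

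I expect the main obstacle to be step (a): obtaining the \emph{uniform-over-$\mathbb{S}^{d-1}$} lower bound on $\langle\Tilde f_1(\boldsymbol{\theta},v),\boldsymbol{\theta}-\boldsymbol{\theta}^*\rangle/\|\boldsymbol{\theta}-\boldsymbol{\theta}^*\|_2$ with the right $d$- and $\epsilon$-dependence, because one must simultaneously (i) extract the $c_l$-coercive quadratic from the nonlinear $\psi_1$ via the Catoni inequalities without losing a constant factor, (ii) control the heavy-tailed cross term $\frac1n\sum_i\langle\boldsymbol{x}_i,\boldsymbol{u}\rangle\varepsilon_i$ uniformly in $\boldsymbol{u}$ using only $2\beta$ moments, which requires the Catoni-type truncation to be doing double duty, and (iii) keep the quadratic-remainder term (the $x^2/2$ in \eqref{catoni c}, which here becomes $\alpha_1^2\langle\boldsymbol{x}_i,\boldsymbol{\theta}-\boldsymbol{\theta}^*\rangle^2/v^2$ and an $\varepsilon_i^2$ term) subdominant to the main quadratic, which is precisely what pins down the constant $C$ in $n\ge Cd[\log\epsilon^{-1}+1]$. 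The interplay of these three on the boundary of a $d$-dimensional region, rather than at a single point, is the technical heart of the argument and the reason the Poincar\'e--Miranda route is needed in place of a direct contraction or convexity argument.
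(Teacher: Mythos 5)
Your overall architecture is the right one and matches the paper's: a cylinder around $(\boldsymbol{\theta}^*,\sigma)$, the (generalized) Poincar\'e--Miranda theorem applied to the sign of $\langle \Tilde f_1,\boldsymbol{\theta}-\boldsymbol{\theta}^*\rangle$ on the lateral wall and of $\Tilde f_2$ on the two faces, Catoni exponential-moment bounds for the deviations, and an algebraic root-comparison to locate the admissible $v$. However, there is a genuine gap in your step (a). You propose to control the noise term uniformly over directions by a union bound over an $\varepsilon$-net of $\mathbb{S}^{d-1}$. Such a net has cardinality $e^{cd}$, so the Catoni/Markov exponential-moment argument pays $\log(e^{cd}/\epsilon)\asymp d+\log\epsilon^{-1}$ in the exponent, and with the robustification parameter \emph{fixed} by \eqref{def:alpha1,2 re} at $\alpha_1\asymp\sqrt{\log(dn^{1/\beta-1/2}\epsilon^{-1})/n}$ (you cannot re-tune it to $\sqrt{d/n}$ without changing the estimator) the resulting per-direction deviation is of order $L\,d/(n\alpha_1)$ rather than $L\sqrt{d}\,\alpha_1$. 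Balancing against the coercive term $c_l\alpha_1\theta_0^2/(3V_0)$ then forces $\theta_0\gtrsim (L/c_l)\,d/\sqrt{n\log(dn^{1/\beta-1/2}\epsilon^{-1})}$, which is worse than the claimed radius $C_1\sigma\sqrt{d\log(dn^{1/\beta-1/2}\epsilon^{-1})/n}$ by a factor $\sqrt{d}/\log(\cdot)$ whenever $d$ is large. So your route does not recover the stated rate.

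The paper avoids the net entirely by exploiting two structural facts. First, by monotonicity of $\psi_1$, the increment $\langle\boldsymbol{x}_i,\boldsymbol{\theta}-\boldsymbol{\theta}^*\rangle\bigl[\psi_1\bigl(\alpha_1(\varepsilon_i+\langle\boldsymbol{x}_i,\boldsymbol{\theta}^*-\boldsymbol{\theta}\rangle)/v\bigr)-\psi_1(\alpha_1\varepsilon_i/v)\bigr]$ is nonpositive, and on the truncation event $\{|\varepsilon_i|\le v/\alpha_1\}$ a pointwise expansion of the Catoni envelopes extracts the coercive quadratic $-\tfrac{\alpha_1}{3v}\langle\boldsymbol{x}_i,\boldsymbol{\theta}-\boldsymbol{\theta}^*\rangle^2$ plus remainders $\frac1n\sum_i\log(1+2\alpha_1^2\varepsilon_i^2/v^2)$ and $\frac1n\sum_i \mathbf 1_{\{|\varepsilon_i|\ge v_0/\alpha_1\}}$ that do not depend on $\boldsymbol{\theta}$ at all; the supremum over the sphere is therefore taken \emph{deterministically}, and only these two scalar averages (plus a Bernstein bound for the indicator sum) need probabilistic control — a step your sketch omits. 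Second, the genuinely $\boldsymbol{\theta}$-dependent random part is linear in $\boldsymbol{\theta}-\boldsymbol{\theta}^*$, namely $\langle \frac1n\sum_i\boldsymbol{x}_i\psi_1(\alpha_1\varepsilon_i/v),\,\boldsymbol{\theta}-\boldsymbol{\theta}^*\rangle$, so it is bounded by $\|\boldsymbol{\theta}-\boldsymbol{\theta}^*\|_1\,\|\frac1n\sum_i\boldsymbol{x}_i\psi_1(\cdot)\|_\infty\le\sqrt d\,\theta_0\,\|\cdot\|_\infty$, and the $\ell_\infty$ norm requires a union bound over only $d$ coordinates (times $n_0$ cells of a partition of $[v_0,V_0]$), costing $\log(dn_0/\epsilon)\asymp n\alpha_1^2$ — exactly what the prescribed $\alpha_1$ is designed to absorb. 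Replacing your sphere net with this monotonicity decomposition plus coordinate-wise $\ell_\infty$/$\ell_1$ duality is what closes the gap; your step (b) and the final bookkeeping are otherwise in line with the paper's argument.
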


The proof of Theorem \ref{thm:jointly main regression} is provided in Appendix \ref{Sec:AppC}.

\subsection{Catoni type Joint robust ridge regression} \label{ss:2-3}

Recall the linear regression in Section \ref{ss:2-2}. We now consider this problem in the context of penalized regression models, which are widely used in high-dimensional settings to improve estimation accuracy and interpretability. Our proposed method can be naturally extended to such penalized frameworks. 

 Specifically, we focus on the case with an $\ell_2$-penalty (ridge regression). For a given confidence level $\epsilon\in (0,\frac{1}{6})$, 
 consider the following coupled equation about $({\boldsymbol{\theta}},{v})$: 
\begin{align} \label{f1f2ridge}
    \begin{cases}
         &\Tilde{f}_{1}^{ridge}(\boldsymbol{\theta},v)=\frac{1}{n}\sum_{i=1}^{n}  \boldsymbol{x}_{i}\psi_{1}\left(\alpha_{1}\frac{\left(y_{i}-\<\boldsymbol{x}_{i},\boldsymbol{\theta}\>\right)}{v}  \right)-\lambda  \boldsymbol{\theta}=\boldsymbol{0},  \\ &\Tilde{f}_{2}^{ridge}(\boldsymbol{\theta},v)=\frac{1}{n}\sum_{i=1}^{n} \psi_{2}\left(\alpha_{2}\left(\frac{\left(y_{i}-\<\boldsymbol{x}_{i},\boldsymbol{\theta}\>\right)^{2}}{v^{2}}-1 \right)\right)=0,
    \end{cases}
\end{align}
where $\alpha_{1}$, $\alpha_{2}$ are defined as in \eqref{def:alpha1,2 re},  $\lambda=\lambda_0 \alpha_1$ and $\lambda_0$ is the penalty coefficient which will be determined later. Our assumptions remain the same as in Section \ref{ss:2-2}.  We can intuitively interpret the equation \eqref{f1f2ridge} in a way similar to that in Section \ref{ss:2-2}, here the term $\lambda \bm{\theta}$ is obtained from differentiating the penalty $\frac{\lambda}2 \|\bm \theta\|^2_2$.

We shall show that \eqref{f1f2ridge} admits a solution $(\hat {\boldsymbol{\theta}}, \hat v)$ which is a consistent estimator for $(\boldsymbol{\theta}^*, \sigma)$. Similar to the previous two cases, we cannot solve the equation \eqref{f1f2ridge} by minimizing a loss function and need to use a very delicate comparison argument.  To more clearly illustrate the impact of the minimum eigenvalue of the Gram matrix on the estimator, we explicitly present the dependence on $c_l$ (defined in \eqref{e:clcuS}) in the following theorem.

\begin{thm} \label{Thm:jointly ridge}
    Recall \eqref{f1f2ridge} with a confidence level $\epsilon\in (0, {1}/{6})$. Let Assumption \ref{assumption:c} and \ref{A1} hold. There exists a constant $C=C(m_{2\beta},\sigma,\beta,c_u,L,M)$ such that, as long as 
    \begin{align*}
        n\ge Cd(c_l+\lambda_0)^{-1}[\log(\epsilon^{-1})+1],
    \end{align*}
     with probability at least $1-6\epsilon$, \eqref{f1f2ridge} admits a solution $(\hat{\boldsymbol{\theta}},\hat{v})$ which satisfies the following error bounds:
    \begin{align*}
        \|\boldsymbol{\theta}^*-\hat{\boldsymbol{\theta}}\|_{2}\le C_1\left( \frac{\sqrt{d}L}{\lambda_0+c_l/\sigma}\sqrt{\frac{\log (d \epsilon^{-1})}{n}}+\frac{\lambda_0}{\lambda_0+c_l/\sigma}\|\boldsymbol{\theta}^*\|_{2}\right),
    \end{align*}
    and
    \begin{align*}
        \left(1+C_2\left(\alpha_{2}^{\beta-1}+\frac{\theta_0}{\theta_0+\frac{1}{2}}\right)\right)^{-\frac{1}{2}}\sigma\le \hat{v}\le  \left(1-C_2\left(\alpha_{2}^{\beta-1}+\frac{\theta_0}{\theta_0+\frac{1}{2}}\right) \right)^{-\frac{1}{2}}\sigma,
    \end{align*}
    with $C_{1}=C_1(c_{l}, L),C_{2}=C_2(m_{2\beta},\sigma,\beta, c_u, L)$ and 
    \begin{align*}
        \theta_0 = C_1\left( \frac{\sqrt{d}L}{\lambda_0+c_l/\sigma}\sqrt{\frac{\log (d \epsilon^{-1})}{n}}+\frac{\lambda_0}{\lambda_0+c_l/\sigma}\|\boldsymbol{\theta}^*\|_{2}\right).
    \end{align*}
    In particular, if $c_l>0$, then taking $\lambda_0 = O\left( L\sqrt{\frac{d\log (d \epsilon^{-1})}{n}} \right)$ gives us
    \begin{align*}
         \|\boldsymbol{\theta}^*-\hat{\boldsymbol{\theta}}\|_{2}\le O\left(L\sqrt{\frac{d\log (d \epsilon^{-1})}{n}}\right).
    \end{align*}
\end{thm}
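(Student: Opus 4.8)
The plan is to follow the same Poincar\'e--Miranda strategy used for Theorems \ref{thm:joint theorem} and \ref{thm:jointly main regression}, but now adapted to the presence of the ridge term $\lambda\boldsymbol{\theta}$. First I would fix a geometric region in $(\boldsymbol{\theta},v)$-space: a ``cone'' of the form
\[
\mathcal{R}=\left\{(\boldsymbol{\theta},v):\ \|\boldsymbol{\theta}-\boldsymbol{\theta}^*\|_2\le r,\ \ (1+\delta)^{-1/2}\sigma\le v\le (1-\delta)^{-1/2}\sigma\right\},
\]
where $r$ is the claimed bound $\theta_0$ and $\delta\asymp \alpha_2^{\beta-1}+\theta_0/(\theta_0+\tfrac12)$. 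The goal is to show that on the boundary of $\mathcal{R}$ the vector field $(\Tilde f_1^{ridge},\Tilde f_2^{ridge})$ points in a consistent direction on each pair of opposite faces, so that Poincar\'e--Miranda yields a zero inside $\mathcal{R}$. For the $v$-faces this is essentially the same second-equation analysis as before, since $\Tilde f_2^{ridge}=\Tilde f_2$ is unchanged; the new work is entirely in controlling $\Tilde f_1^{ridge}$ on the $\boldsymbol{\theta}$-boundary $\|\boldsymbol{\theta}-\boldsymbol{\theta}^*\|_2=r$.

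The key algebraic step is a decomposition of $\langle \Tilde f_1^{ridge}(\boldsymbol{\theta},v),\boldsymbol{\theta}-\boldsymbol{\theta}^*\rangle$ (or its projection onto a fixed unit direction, as in the proof of Theorem \ref{thm:jointly main regression}) into: (a) a ``signal'' term coming from the Catoni function applied near the true residuals, which by the monotonicity in \eqref{catoni c} and a first-order expansion behaves like $-\alpha_1 c_l\|\boldsymbol{\theta}-\boldsymbol{\theta}^*\|_2^2/v$ up to constants, using Assumption \ref{A1}; (b) the ridge term $-\lambda\langle\boldsymbol{\theta},\boldsymbol{\theta}-\boldsymbol{\theta}^*\rangle$, which I would split as $-\lambda\|\boldsymbol{\theta}-\boldsymbol{\theta}^*\|_2^2-\lambda\langle\boldsymbol{\theta}^*,\boldsymbol{\theta}-\boldsymbol{\theta}^*\rangle$, the first piece helping and the second bounded by $\lambda\|\boldsymbol{\theta}^*\|_2\|\boldsymbol{\theta}-\boldsymbol{\theta}^*\|_2$; and (c) a stochastic fluctuation term $\langle \Tilde f_1^{ridge}(\boldsymbol{\theta}^*,\sigma),\cdot\rangle$ plus the discrepancy from replacing $v$ by $\sigma$, both of which I would bound uniformly over $\mathcal{R}$ by the maximal/exponential-moment inequality from Appendix A, getting a term of order $\alpha_1\sqrt{d}L\,\|\boldsymbol{\theta}-\boldsymbol{\theta}^*\|_2$ with probability $1-O(\epsilon)$. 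Collecting (a)--(c), the inner product is negative on $\|\boldsymbol{\theta}-\boldsymbol{\theta}^*\|_2=r$ precisely when $(\alpha_1 c_l/\sigma+\lambda)\,r \gtrsim \alpha_1\sqrt{d}L+\lambda\|\boldsymbol{\theta}^*\|_2$, i.e. when $r\gtrsim \tfrac{\sqrt{d}L}{\lambda_0+c_l/\sigma}\alpha_1+\tfrac{\lambda_0}{\lambda_0+c_l/\sigma}\|\boldsymbol{\theta}^*\|_2$ after substituting $\lambda=\lambda_0\alpha_1$ and dividing through by $\alpha_1$; this is exactly $\theta_0$. The sample-size condition $n\ge Cd(c_l+\lambda_0)^{-1}[\log\epsilon^{-1}+1]$ is what makes the linearization in (a) legitimate (it guarantees the Catoni argument stays in the near-linear regime over all of $\mathcal{R}$) and forces $\delta<1$.

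For the $v$-direction I would run the comparison lemma from Appendix A on $\Tilde f_2^{ridge}$: on the face $v=(1-\delta)^{-1/2}\sigma$ the rescaled residuals $(y_i-\langle\boldsymbol{x}_i,\boldsymbol{\theta}\rangle)^2/v^2$ are, on average, pushed below $1$, so $\Tilde f_2^{ridge}<0$ there, and symmetrically $\Tilde f_2^{ridge}>0$ on $v=(1+\delta)^{-1/2}\sigma$; the extra subtlety compared with Theorem \ref{thm:joint theorem} is that $\boldsymbol{\theta}$ ranges over the ball of radius $\theta_0$ rather than being fixed, which injects an additional bias of size $O(\|\boldsymbol{\theta}-\boldsymbol{\theta}^*\|_2\cdot\text{stuff})=O(\theta_0\cdot\text{stuff})$ into the residual sum --- this is the origin of the $\theta_0/(\theta_0+\tfrac12)$ term inside $\delta$, and it is handled by the same Cauchy--Schwarz plus moment-bound routine. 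Once both boundary-sign conditions hold simultaneously with probability $1-O(\epsilon)$ (union bound over the finitely many ``bad'' events, after a covering-number argument on the sphere to make the fluctuation bounds uniform), Poincar\'e--Miranda delivers the solution $(\hat{\boldsymbol{\theta}},\hat v)\in\mathcal{R}$, and the stated bounds are read off from the definition of $\mathcal{R}$; the final display with $\lambda_0=O(L\sqrt{d\log(\cdot)/n})$ is an immediate substitution that balances the two terms in $\theta_0$.

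The main obstacle I anticipate is the \emph{uniform} control of the stochastic term in (c) over the cone $\mathcal{R}$ while keeping the dependence on $c_l$ and $\lambda_0$ sharp enough to reproduce the factor $(\lambda_0+c_l/\sigma)^{-1}$ in $\theta_0$: one must be careful that the covering argument on $\mathbb{S}^{d-1}$ contributes only a $\log d$ (absorbed into $\alpha_1$), that the perturbation $v\mapsto\sigma$ does not degrade the $c_l$ term, and that the ridge term's ``bad'' piece $\lambda\langle\boldsymbol{\theta}^*,\boldsymbol{\theta}-\boldsymbol{\theta}^*\rangle$ is compared against the \emph{sum} $\alpha_1 c_l/\sigma+\lambda$ rather than against $\alpha_1 c_l/\sigma$ alone --- getting the denominator to be $\lambda_0+c_l/\sigma$ instead of just $c_l/\sigma$ is the whole point of the ridge analysis and requires retaining the helpful $-\lambda\|\boldsymbol{\theta}-\boldsymbol{\theta}^*\|_2^2$ piece from (b) throughout.
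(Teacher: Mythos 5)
Your proposal follows essentially the same route as the paper: the same cylinder region, the same Poincar\'e--Miranda boundary-sign verification, the same split of the ridge term into a helpful $-\lambda\|\boldsymbol{\theta}-\boldsymbol{\theta}^*\|_2^2$ piece and a $\lambda\|\boldsymbol{\theta}^*\|_2\|\boldsymbol{\theta}-\boldsymbol{\theta}^*\|_2$ remainder, and the same identification of the $\theta_0/(\theta_0+\tfrac12)$ contribution to the variance band as the bias injected by letting $\boldsymbol{\theta}$ range over the ball. The only mechanical difference is that the paper controls the fluctuation term uniformly in $\boldsymbol{\theta}$ via the H\"older bound $\langle u,\boldsymbol{\theta}-\boldsymbol{\theta}^*\rangle\le\sqrt{d}\,\|u\|_\infty\|\boldsymbol{\theta}-\boldsymbol{\theta}^*\|_2$ with a union bound over the $d$ coordinates (plus a covering of the $v$-interval only), rather than a net on $\mathbb{S}^{d-1}$ --- which is how it pays only a $\log d$ rather than the factor a genuine sphere covering would cost.
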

The proof of Theorem \ref{Thm:jointly ridge} will be given in Appendix \ref{Sec:AppD}.

\section{Simulations}\label{sec:simulations}

In this section we report three numerical experiments that examine the
finite-sample performance of the joint Catoni-type estimators introduced in
Section~\ref{Sec:2}.  The experiments are designed to parallel the
three estimation problems analysed in Theorems~\ref{thm:joint theorem}, ~\ref{thm:jointly main regression}, and ~\ref{Thm:jointly ridge}:
(i)~joint mean--variance estimation, (ii)~joint regression, and
(iii)~joint $\ell_2$-penalised (ridge) regression.  In every experiment the
performance of each estimator is assessed through two criteria:
(i)~the $\gamma$-quantile of the estimation error at levels
$\gamma\in[0.9,1]$, and (ii)~the dependence of the $99\%$ quantile of the
estimation error on a parameter characterising the tail heaviness of the
noise distribution (degrees of freedom, shape parameter, or scale).

Throughout the simulations we set the confidence level $\varepsilon=0.01$
and let the noise be generated from one of the following four
distributions:
\begin{itemize}
  \item the normal distribution;
  \item the (non-central) Student's $t$ distribution;
  \item the Generalised Pareto distribution;
  \item the Burr Type XII distribution.
\end{itemize}
The latter three are widely used heavy-tailed benchmarks.  The
configuration of each distribution will be specified case by case.  All
Monte Carlo averages reported below are based on independent repetitions
with sample size $n=500$.  

\subsection{Numerical experiment 1: mean estimation}
\label{sec:sim-mean}

In this experiment we evaluate the error stability of three Catoni-type
robust estimators of the mean:
\begin{enumerate}
  \item the Catoni estimator with the true variance $\sigma^2$;
  \item the Catoni estimator with the sample variance $\bar\sigma^2$
        replacing $\sigma^2$;
  \item our joint Catoni mean--variance estimator defined by
        equation~\eqref{f1f2}.
\end{enumerate}
For comparison we also report the classical sample mean, which highlights
the benefit of robustification.  For the first two Catoni estimators we
use the widest influence function as the following
\[
  \psi_1(x)
  \;=\;
  \begin{cases}
    \log\!\bigl(1 + x + x^2/2\bigr),      &  x \ge 0,\\[2pt]
   -\log\!\bigl(1 - x + x^2/2\bigr),      &  x < 0,
  \end{cases}
\]
while for the joint estimator we take the same $\psi_1$ and set $\psi_2$
to be as the following 
\begin{align} \label{eq:psi2widest}
    \psi_2(x)
  \;=\;
  \begin{cases}
     \log\!\bigl(1 + x + |x|^\beta/\beta\bigr), & x \ge 0,\\[2pt]
    -\log\!\bigl(1 - x + |x|^\beta/\beta\bigr), & x < 0,
  \end{cases}
\end{align}
where $\beta\in(1,2]$ is chosen according to the concrete data setting.

The robustification parameters are set as follows.  For estimator~(1),
\[
  \alpha \;=\; \tfrac{1}{\sigma}\sqrt{\tfrac{2\log(\varepsilon^{-1})}{n}};
  \qquad
  \text{for estimator (2),} \quad
  \bar\alpha \;=\; \tfrac{1}{\bar\sigma}\sqrt{\tfrac{2\log(\varepsilon^{-1})}{n}}.
\]
For the joint estimator, Theorem~\ref{thm:joint theorem} suggests
\[
  \alpha_1 \;=\; \sqrt{\tfrac{2\log(\varepsilon^{-1})}{n}},
  \qquad
  \alpha_2 \;=\; \Bigl(\tfrac{\log(\varepsilon^{-1})}{n}\Bigr)^{1/\beta}.
\]

Figure~\ref{fig:mean1} reports the quantiles of the estimation error
$|\widehat\theta - \mu|$ at levels $\gamma\in[0.9,1]$ for the four
estimators above.  Each curve is produced from $1{,}000$ independent
repetitions with $n=500$.  The parameter configurations for the four
distributions are
\begin{itemize}
  \item normal distribution: standard deviation $\sigma=1$, mean $\mu=0$;
  \item non-central $t$ distribution: degrees of freedom $\nu=2.1$,
        noncentrality parameter $\delta=1$,
        mean $\mu = \sqrt{\nu/2}\,
        \Gamma\!\bigl((\nu-1)/2\bigr)/\Gamma(\nu/2)\cdot\delta$;
  \item Generalised Pareto distribution: shape $\xi = 1/2.1$,
        scale $1$, mean $\mu = 1/(1-\xi) = 21/11$;
  \item Burr Type~XII distribution: first shape $c=1$, second shape
        $k=2.1$, mean $\mu = k\,
        \Gamma(k-1/c)\Gamma(1+1/c)/\Gamma(k+1)$.
\end{itemize}
The three heavy-tailed distributions are calibrated so that their tails
are comparable to those of a $t$-distribution with $2.1$ degrees of
freedom.  In the joint estimator we set $\beta = 2$ under the normal
distribution and $\beta = (2.1 - 0.01)/2$ under the three heavy-tailed
distributions.

From the first panel of Figure~\ref{fig:mean1} we see that under the
normal distribution the four estimators are nearly indistinguishable, as
expected.  Under the heavy-tailed distributions, however, all three
robust estimators clearly dominate the sample mean; in particular, the
joint Catoni estimator is essentially the best curve across the entire
$[0.9, 1]$ range.  A convenient summary is given by the mean
of the estimation error, reported in Table~\ref{tab:mean-summary}.

\begin{table}[t]
  \centering
  \small
  \caption{mean of $|\widehat\theta-\mu|$ from
    $1{,}000$ replications with $n=500$.}
  \label{tab:mean-summary}
  \begin{tabular}{lcccc}
    \toprule
    Distribution
      & sample mean
      & Catoni (true var.)
      & Catoni (samp.~var.)
      & joint Catoni \\
    \midrule
    Normal                                     & 0.0356 & 0.0356 & 0.0356 & 0.0356 \\
    Non-central $t$ ($\nu=2.1$)                & 0.1281 & 0.1164 & 0.1205 & 0.1147 \\
    Generalised Pareto ($\xi=1/2.1$)           & 0.1722 & 0.1537 & 0.1646 & 0.1549 \\
    Burr Type~XII ($c=1,\,k=2.1$)              & 0.0813 & 0.0734 & 0.0784 & 0.0746 \\
    \bottomrule
  \end{tabular}
\end{table}

Figure~\ref{fig:mean2} further investigates how the $99\%$ quantile of
the estimation error depends on the distributional parameter.  For the
normal distribution we vary the standard deviation over $\{1,\,1.5,\,
2,\,\ldots,\,10\}$; for the non-central $t$, Generalised Pareto and the Bur Type XII distributions we vary the degrees of freedom $\nu$, the reciprocal of shape $1/\xi$ and the second shape $k $ over $\{2.1,\,2.2,\,\ldots,\,4\}$ respectively.  In the Gaussian setting the four methods
are practically coincident for every value of $\sigma$.  Under
heavy-tailed noise, the three robust methods provide strikingly more
stable estimates; in particular, the joint Catoni estimator is the
overall best when the tail is heaviest, and remains on par with the
Catoni estimator equipped with the true variance as the tail becomes
lighter.  Interestingly, the joint estimator often outperforms the
Catoni estimator that uses the true variance at high quantile level: we conjecture that this is
because the robustification parameter of the joint approach is estimated
from the data and therefore adapts to the presence of extreme outliers,
while relying on the true variance lacks such adaptivity.  Finally,
replacing the true variance with the sample variance visibly degrades
the Catoni estimator, since the sample variance itself is highly unstable
under heavy-tailed noise.

\begin{figure}[tp]
  \centering
  \includegraphics[width=0.8\linewidth]{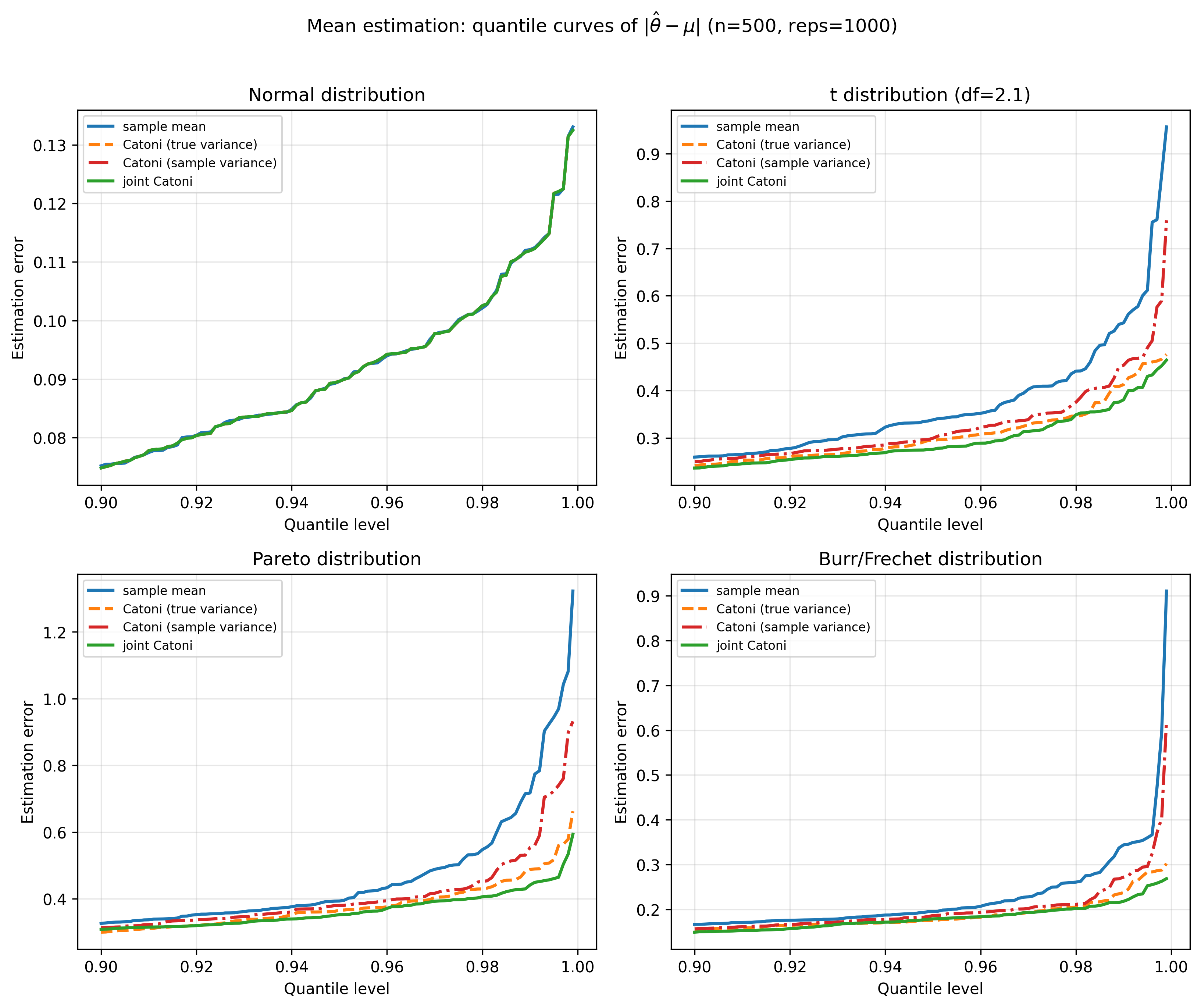}
  \caption{The $\gamma$-quantile of the estimation error $|\widehat\theta-\mu|$
    ($y$-axis) versus $\gamma$ ($x$-axis) for the sample mean and the
    three Catoni-type estimators, based on $1{,}000$ repetitions with
    $n=500$.}
  \label{fig:mean1}
\end{figure}

\begin{figure}[tp]
  \centering
  \includegraphics[width=0.8\linewidth]{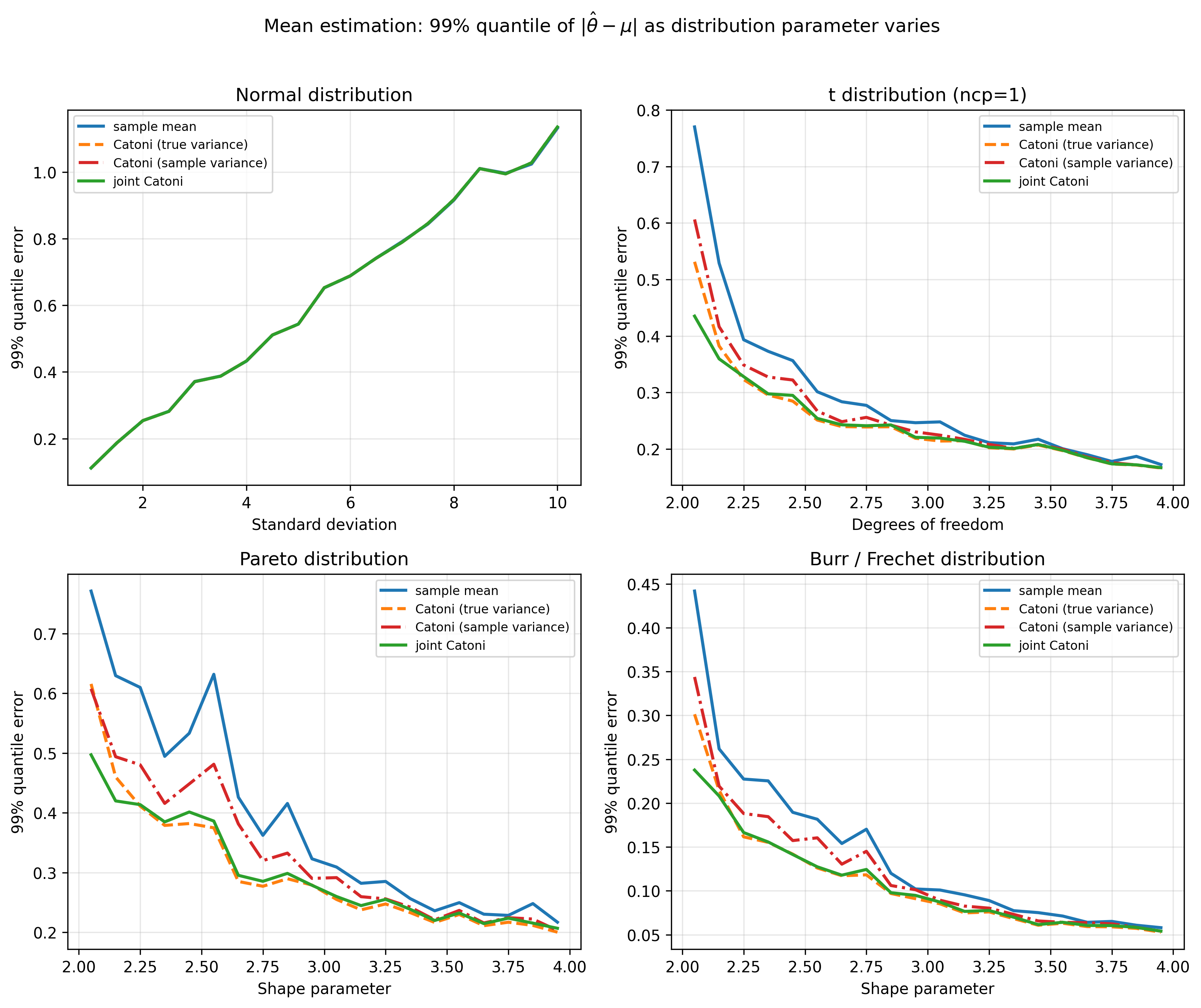}
  \caption{Empirical $99\%$ quantile of the estimation error
    ($y$-axis) for different methods versus a distribution parameter
    (standard deviation for the normal distribution, degrees of freedom
    for the non-central $t$ distribution, the reciprocal of shape for symmetric generalised Pareto and the second shape of Burr Type XII, $x$-axis).  The compared methods are
    the sample mean, the Catoni estimator with the true variance, the
    Catoni estimator with the sample variance, and the joint Catoni
    estimator.}
  \label{fig:mean2}
\end{figure}

\subsection{Numerical experiment 2: linear regression}
\label{sec:sim-regression}

In this experiment we compare the error performance of
\begin{enumerate}
  \item the ordinary least squares (OLS) estimator;
  \item the adaptive Huber estimator ~\cite{SZF20};
  \item the joint Catoni estimator with $\psi_1$being the
        narrowest choices, i.e.
\begin{equation}
  \psi_1(x) \;=\;
  \begin{cases}
    -\operatorname{sign}(x)\log\bigl(1-|x|+x^2/2\bigr), & |x|\le 1,\\
    \operatorname{sign}(x)\log 2,                      & |x|>1,
  \end{cases}
  \label{eq:psi1-narrow}
\end{equation}
and $\psi_2$ be the same as in \eqref{eq:psi2widest}.
\end{enumerate}
The data $\{(y_i,\bm x_i):i=1,\ldots,n\}$ are generated from the linear
model~\eqref{f1f2regression} with covariate dimension $d=5$.  The true
regression coefficient is fixed at $\bm\theta^*=(5,0,-8,0,2)^\top$ and
the covariates $\bm x_i$ are i.i.d.\ from $\mathcal{N}(\bm 0, I_d)$.

For the adaptive Huber estimator we follow the robustification parameter
choice of 
\[
  \widehat\tau \;=\; \widehat c_1\, \bar\sigma\sqrt{\tfrac{n}{\log n}},
  \qquad
  \bar\sigma^2 \;=\; \tfrac{1}{n}\sum_{i=1}^n (y_i - \bm x_i^\top
  \widehat\theta^{\,\text{OLS}})^2,
\]
where we use an OLS residual-based estimate of the error variance so as
to avoid confounding with the signal; for the joint Catoni estimator we
take, in accordance with Theorem \ref{thm:jointly main regression},
\[
  \alpha_1 \;=\; \widehat c_2 \sqrt{\tfrac{2\log(d\varepsilon^{-1})}{n}},
  \qquad
  \alpha_2 \;=\; \widehat c_3
              \Bigl(\tfrac{\log(\varepsilon^{-1})}{n}\Bigr)^{1/\beta},
\]
with $\beta$ set as in the mean-estimation case ($\beta=2$ under
Gaussian noise and $\beta=(2.1-0.01)/2$ under heavy-tailed noise), and
the tuning constants $\widehat c_1, \widehat c_2, \widehat c_3$ selected
from the set $\{0.5,\,1,\,1.5\}$ via three-fold cross-validation.

Figure~\ref{fig:reg1} displays the estimation error
$\|\widehat{\bm\theta}-\bm\theta^*\|_2$ across quantile levels from
$0.9$ to $1$ for the three estimators, based on $500$ replications
with $n=500$.  Four error distributions are considered: standard normal
($\sigma=1$), Student's $t$ with $\nu=2.1$, symmetric generalised
Pareto (scale $=1$, shape $\xi=1/2.1$, obtained by attaching a random sign to
a generalised Pareto distribution), and symmetric Burr Type XII
(first shape $c=1$, second shape $k=2.1$, obtained by attaching a random
sign to a Burr Type XII distribution); all four noises are symmetric.

Under the light-tailed normal distribution (first panel of
Figure~\ref{fig:reg1}) the three estimators perform almost identically.
Under the heavy-tailed cases, however, OLS deteriorates sharply at
quantile levels beyond $0.95$, reflecting the well-known vulnerability of
least squares to outliers.  The adaptive Huber estimator offers a
substantial improvement but still exhibits mild fluctuations stemming
from the variability of the preliminary $\bar\sigma$.  The joint Catoni
estimator is the most stable curve across the board.  The mean of estimation error summary is given in Table~\ref{tab:reg-summary}.

\begin{table}[t]
  \centering
  \small
  \caption{mean of
    $\|\widehat{\bm\theta}-\bm\theta^*\|_2$ from $500$ replications
    with $n=500$ and $d=5$.}
  \label{tab:reg-summary}
  \begin{tabular}{lccc}
    \toprule
    Distribution                              & OLS    & adaptive Huber & joint Catoni \\
    \midrule
    Standard normal                           & 0.0956  & 0.0956          & 0.0956 \\
    $t$ ($\nu=2.1$)                           & 0.2700  & 0.2061          & 0.1866 \\
    Symmetric Generalised Pareto ($\xi=1/2.1$)     & 0.4878  & 0.3327          & 0.2756 \\
    Symmetric Burr Type XII ($\text{c}=1,\text{k}=2.1$)  & 0.2253  & 0.1553          & 0.1324 \\
    \bottomrule
  \end{tabular}
\end{table}

We also report in Figure~\ref{fig:reg2} how the $99\%$ quantile of the
estimation error varies with a distribution parameter.  Under the normal
distribution the three methods remain indistinguishable for every value
of $\sigma\in\{1,2,\ldots,10\}$.  Under heavy-tailed noise, the
robust methods provide a significant advantage; in particular, the joint
Catoni estimator---which simultaneously estimates both the regression
coefficient $\bm\theta^*$ and the noise scale---outperforms the
adaptive Huber estimator, when the noise is very
heavy-tailed.

\begin{figure}[tp]
  \centering
  \includegraphics[width=0.8\linewidth]{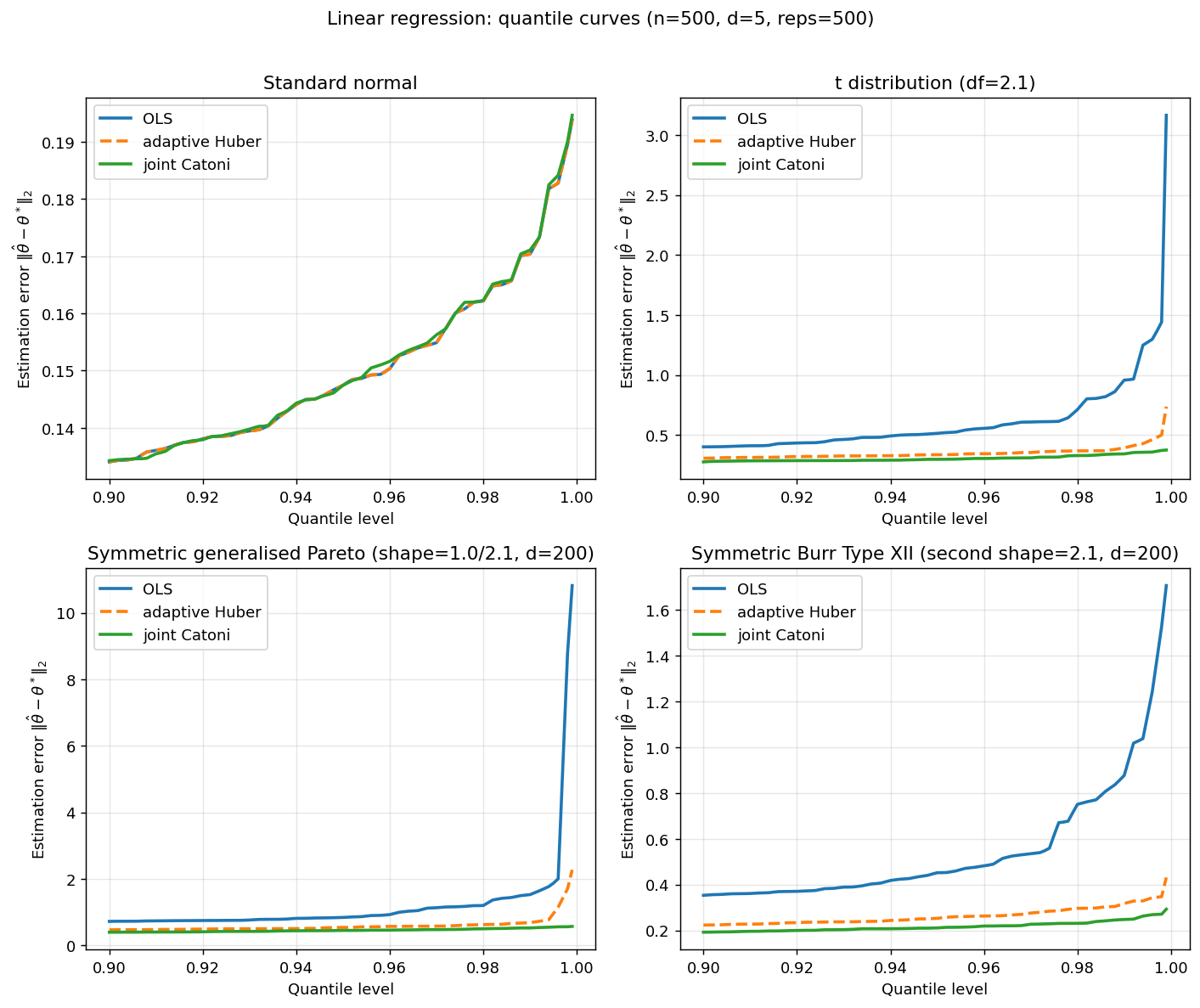}
  \caption{The $\gamma$-quantile of the estimation error
    $\|\widehat{\bm\theta}-\bm\theta^*\|_2$ ($y$-axis) versus $\gamma$
    ($x$-axis) for the OLS, adaptive Huber, and joint Catoni estimators
    with $n=500$ and $d=5$, based on $500$ simulations.}
  \label{fig:reg1}
\end{figure}

\begin{figure}[tp]
  \centering
  \includegraphics[width=0.8\linewidth]{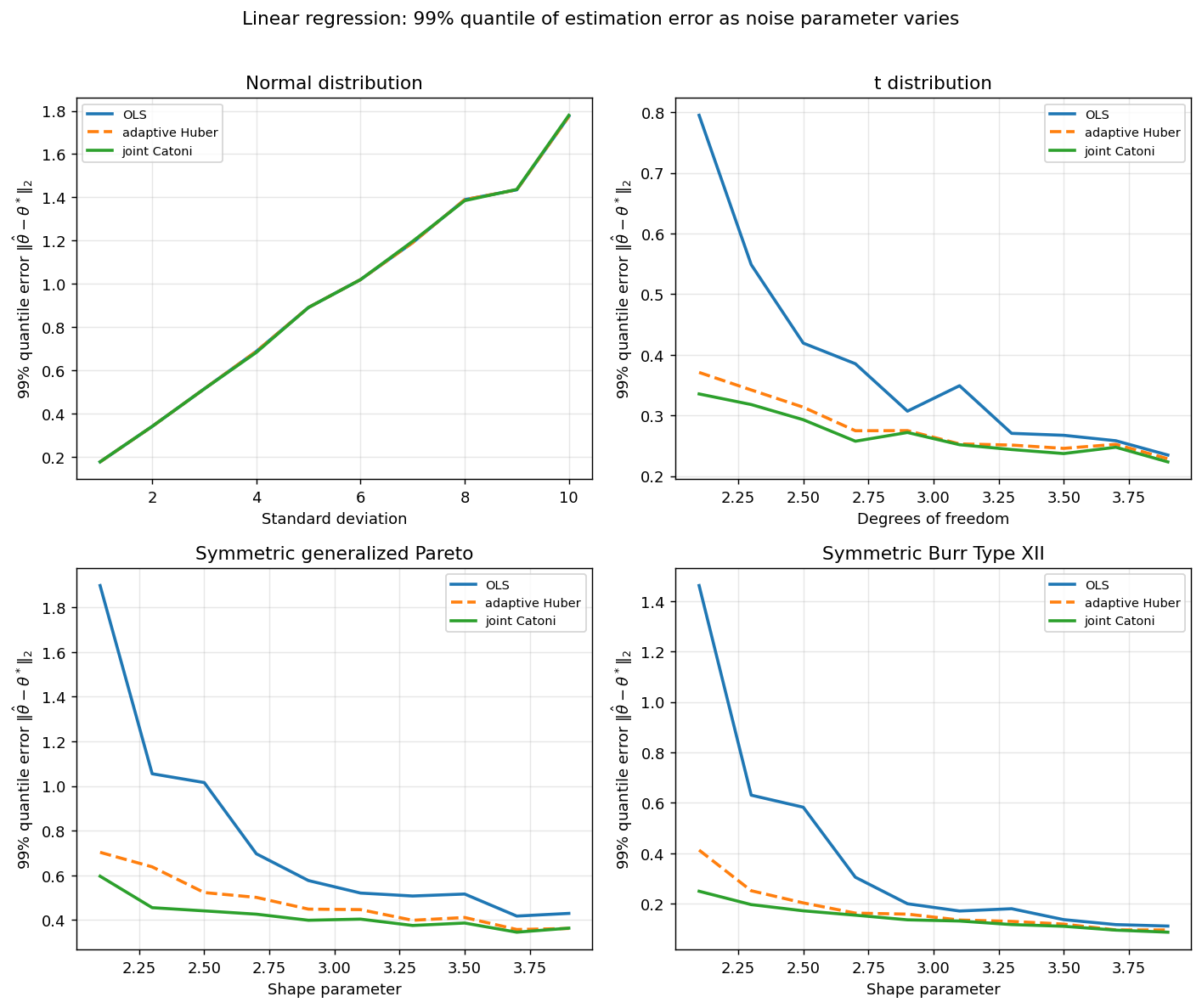}
  \caption{Empirical $99\%$ quantile of the estimation error
    $\|\widehat{\bm\theta}-\bm\theta^*\|_2$ ($y$-axis) versus a
    distribution parameter (standard deviation for the normal, degrees
    of freedom for the $t$, the reciprocal of shape for symmetric generalised Pareto and the second shape of Burr Type XII
    , $x$-axis).  The compared methods are OLS, adaptive Huber,
    and joint Catoni.}
  \label{fig:reg2}
\end{figure}

\subsection{Numerical experiment 3: \texorpdfstring{$\ell_2$}{l2}-penalised regression}
\label{sec:sim-ridge}

We now consider the high-dimensional version of
Section~\ref{sec:sim-regression} and compare three $\ell_2$-penalised
estimators for the linear model~\eqref{f1f2ridge}:
\begin{enumerate}
  \item the ridge estimator;
  \item the $\ell_2$-penalised adaptive Huber estimator;
  \item the $\ell_2$-penalised joint Catoni estimator.
\end{enumerate}
The true coefficient vector is
$\bm\theta^* = (5,0,-8,0,2,0,\ldots,0)^\top\in\mathbb{R}^d$ with $d=200$,
i.e.\ only the first, third, and fifth components are non-zero.  The
covariates $\bm x_i$ are i.i.d.\ from the standard multivariate normal
$\mathcal{N}(\bm 0, I_d)$.

Robustification parameters are selected similarly to the low-dimensional
case.  For the adaptive Huber estimator,
\[
  \widehat\tau =   \bar\sigma \sqrt{\tfrac{n}{\log n \log d}},
  \qquad
  \bar\sigma^2 = \tfrac{1}{n}\sum_{i=1}^n (y_i - \bm x_i^\top
  \widehat\theta^{\,\text{ridge}})^2.
\]
For the joint Catoni estimator,
\[
  \alpha_1 = \sqrt{\tfrac{2\log(d\varepsilon^{-1})}{n}},
  \qquad
  \alpha_2 =\Bigl(\tfrac{\log(\varepsilon^{-1})}{n}\Bigr)^{1/\beta},
\]
where $\varepsilon,\beta$ are
chosen as in Section~\ref{sec:sim-regression}.

The penalty coefficients are selected as follows.  The ridge penalty
$\widehat\lambda_{\text{ridge}}$ is determined via five-fold
cross-validation on a logarithmic grid.  For the adaptive Huber and
joint Catoni estimators, the penalty parameters are obtained by
rescaling $\widehat\lambda_{\text{ridge}}$:
\[
  \widehat\lambda_{\text{AH}} = \widehat c_{\text{AH}}\,
  \widehat\lambda_{\text{ridge}},
  \qquad
  \widehat\lambda_{\text{JC}} = \widehat c_{\text{JC}}\,
  (\alpha_{1}/\hat{v})\widehat\lambda_{\text{ridge}},
\]
where $\widehat c_{\text{AH}}$ and $\widehat c_{\text{JC}}$ are selected from $\{0.5,\,1,\,1.5\}$, both through three-fold
cross-validation using a Huber-loss out-of-fold metric that is robust to
extreme test residuals.  The factor $\alpha_{1}/\hat{v}$ in $\hat{\lambda}_{JC}$ serves to rescale the penalty so that it is commensurate with $\tilde{f}_{1}^{ridge}$. 

In our experiments we fix $n=500$ and $d=200$ and perform $500$
independent repetitions.  Figure~\ref{fig:ridge1} shows the empirical
quantiles of the estimation error as a function of the quantile level,
under the four error distributions described in
Section~\ref{sec:sim-regression}.  Both the $\ell_2$-penalised adaptive
Huber and joint Catoni estimators display robust behaviour at high
quantile levels compared with ridge regression; in particular, under the
heaviest tails, ridge deteriorates by a factor of $2$--$3$ at the
$99\%$-quantile.  The mean of estimation error summary is collected in
Table~\ref{tab:ridge-summary}.

\begin{table}[t]
  \centering
  \small
  \caption{Empirical $99\%$ quantile of
    $\|\widehat{\bm\theta}-\bm\theta^*\|_2$ from $1{,}000$ replications
    with $n=500$ and $d=200$.}
  \label{tab:ridge-summary}
  \begin{tabular}{lccc}
    \toprule
    Distribution                              & ridge  & $\ell_2$-Huber & $\ell_2$-joint Catoni \\
    \midrule
    Standard normal                           & 0.8226  & 0.8185          & 0.8211 \\
    $t$ ($\nu=2.1$)                           & 2.1338  & 1.7458          & 1.6723 \\
    Symmetric Pareto ($\text{shape}=2.1$)     & 3.4722  & 2.7665          &2.6335 \\
    Symmetric Fr\'echet ($\text{shape}=2.1$)  & 1.7575  & 1.3669          & 1.2783 \\
    \bottomrule
  \end{tabular}
\end{table}

In a similar spirit to Figure~\ref{fig:reg2}, we also examine how the
$99\%$ quantile of the estimation error depends on the noise
distribution's shape parameter (The number of replications per parameter setting is reduced to 100).  Results are presented in
Figure~\ref{fig:ridge2}.  When the noise is normally distributed, the
three methods perform comparably across all values of the scale
parameter. However, under extremely heavy-tailed noise, the joint Catoni estimator outperforms the adaptive Huber estimator, and both robust methods exhibit substantially lower estimation errors than ridge regression, demonstrating their superior stability in the presence of heavy-tailed noise.

\begin{figure}[tp]
  \centering
  \includegraphics[width=0.8\linewidth]{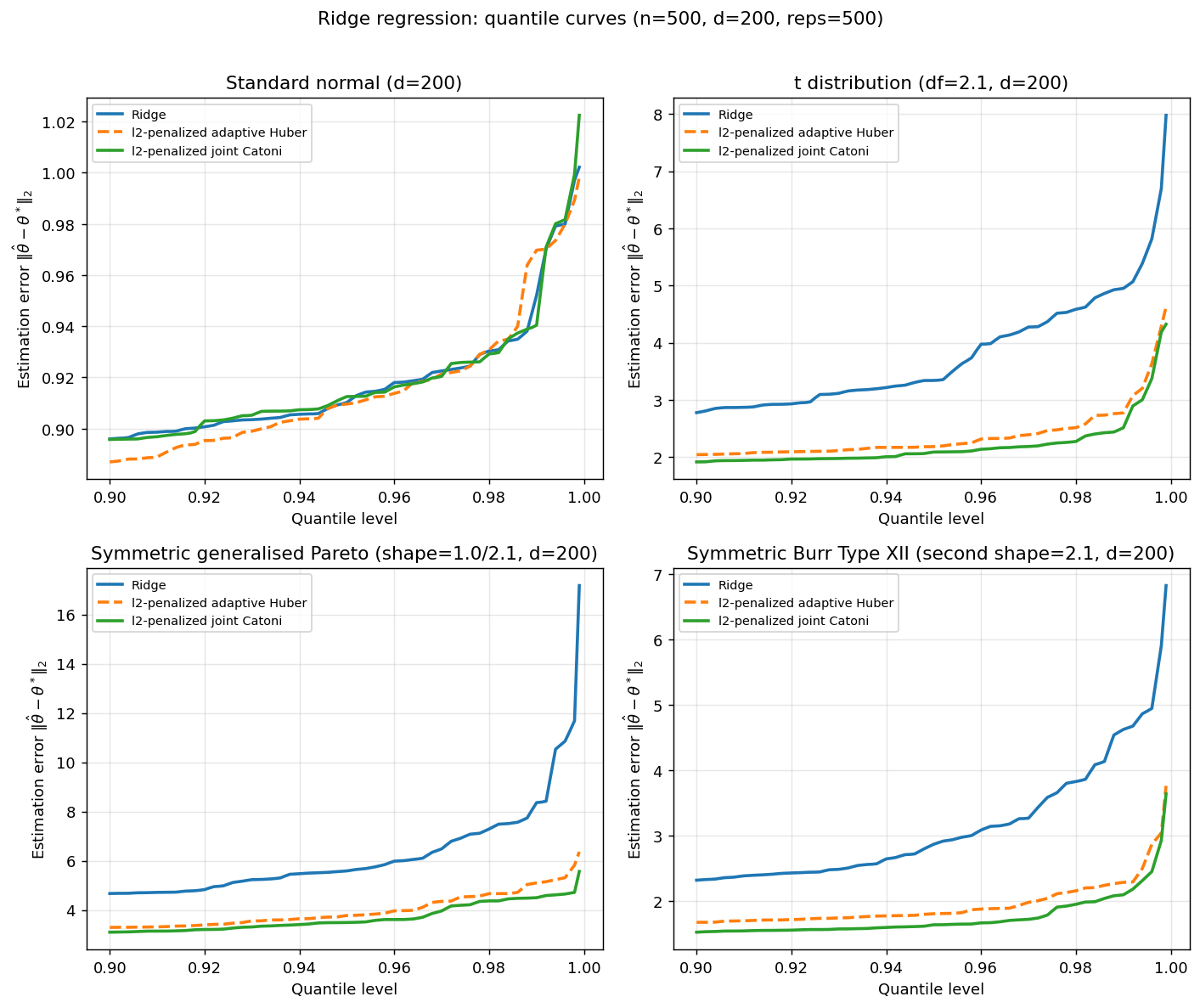}
  \caption{The $\gamma$-quantile of the estimation error
    $\|\widehat{\bm\theta}-\bm\theta^*\|_2$ ($y$-axis) versus $\gamma$
    ($x$-axis) for ridge, $\ell_2$-penalised adaptive Huber, and
    $\ell_2$-penalised joint Catoni estimators with $n=500$ and $d=200$,
    based on $500$ simulations.}
  \label{fig:ridge1}
\end{figure}

\begin{figure}[tp]
  \centering
  \includegraphics[width=0.8\linewidth]{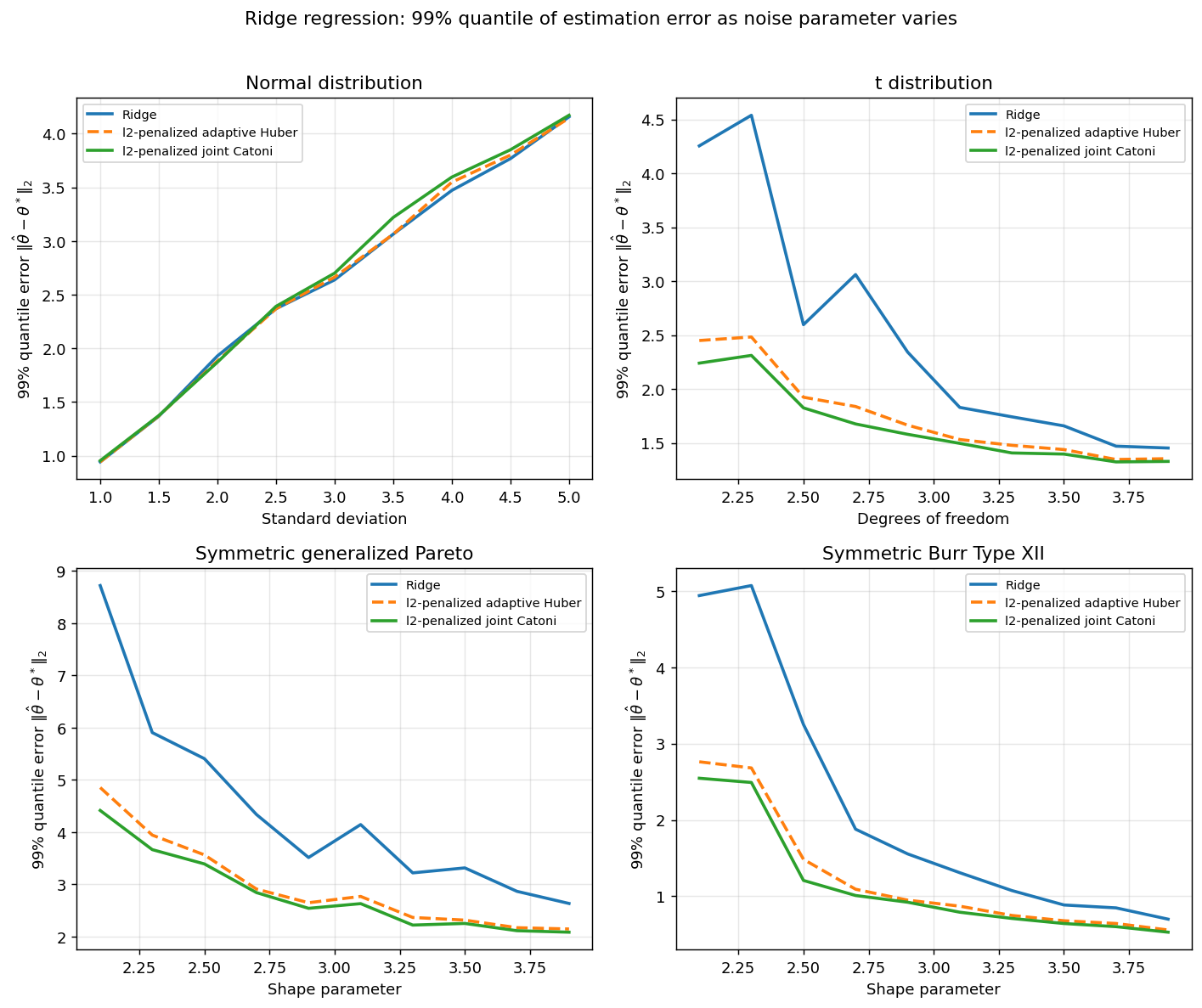}
  \caption{Empirical $99\%$ quantile of the estimation error
    $\|\widehat{\bm\theta}-\bm\theta^*\|_2$ ($y$-axis) versus a
    distribution parameter (standard deviation for the normal, degrees
    of freedom for the $t$, the reciprocal of shape for symmetric generalised Pareto and the second shape of Burr Type XII, $x$-axis) for ridge, $\ell_2$-penalised adaptive Huber,
    and $\ell_2$-penalised joint Catoni estimators.}
  \label{fig:ridge2}
\end{figure}

\subsection{Summary of the numerical findings}
\label{sec:sim-summary}

Combining the three experiments, we draw the following empirical
conclusions:
\begin{enumerate}
  \item Under Gaussian noise, the joint Catoni estimator is essentially
        indistinguishable from the best non-robust baseline (the sample
        mean, OLS, or ridge).  In particular, robustification incurs no
        observable cost in the light-tailed regime.
  \item Under heavy-tailed noise, the joint Catoni estimator---which
        simultaneously estimates the location/coefficient and the
        noise scale---consistently outperforms both the non-robust
        baseline and the plug-in Catoni estimator that uses the sample
        variance; it is at least comparable with, and often superior
        to, the adaptive Huber estimator in low dimension.
  \item In the high-dimensional ridge setting the joint Catoni
        estimator and the $\ell_2$-penalised adaptive Huber are both
        dramatically more stable than ridge regression at high quantile
        levels; the two robust methods are of comparable quality, and
        the choice between them can be guided by the heaviness of the
        tail and the available computational budget.
\end{enumerate}
These findings confirm the theoretical guarantees established in
Theorems~\ref{thm:joint theorem}, ~\ref{thm:jointly main regression}, and ~\ref{Thm:jointly ridge}:
the proposed joint estimators deliver sub-Gaussian-type deviation
behaviour under only a finite $2\beta$-moment assumption on the noise,
and do so without any tuning of the scale.

\subsection{A brief discussion on the selection of Catoni type functions} \label{ss:discussioncatoni}

 In the previous simulations analysis, we chose special $\psi_1$ and $\psi_2$ and did not give reasons for such choices. The theory about selecting Catoni functions is an important and delicate topic, we leave it for the future research and demonstrate how to practically choose such functions through the example of mean estimation.

We conduct numerical experiments to compare joint Catoni-type estimators under three choices of the $\psi_1$ function across two pairs of data-generating distributions (symmetric vs. asymmetric variants). Specifically, we consider:
\begin{itemize}
    \item [(1)] The widest function satisfying Catoni’s conditions \eqref{catoni c}:
    \begin{align*}
        \psi_{1}^{wide}(x):=\begin{cases}\log \left(1+x+x^2 / 2\right), & x \geq 0 \\ -\log \left(1-x+x^2 / 2\right), & x < 0\end{cases}
    \end{align*}
    \item[(2)] The narrowest function satisfying Catoni’s conditions:
    \begin{align*}
        \psi_{1}^{narrow}(x) = \begin{cases}
            -\operatorname{sign}(x)\log\left(1-|x|+\frac{x^{2}}{2} \right), & |x|\le 1,\\
            \operatorname{sign}(x)\log(2), &|x|>1,
        \end{cases}
    \end{align*}
    \item[(3)] An intermediate profile constructed by mixing the widest and narrowest function:
    \begin{align*}
        \psi_{1}^{mid}(x):=\frac{1}{2}\psi_{1}^{wide}(x)+\frac{1}{2}\psi_{1}^{narrow}(x).
    \end{align*}
\end{itemize}
For all three joint Catoni methods, $\psi_2$ is set to the widest choice 
\begin{align*}
     \psi_{2}(x)= \begin{cases}
    \log \left(1+x+|x|^{\beta} / \beta\right), & x \geq 0 \\ -\log \left(1-x+|x|^{\beta} / \beta\right), & x < 0\end{cases}
\end{align*}
and the parameter settings are set as: $\beta=(2.1-0.01)/2$,
\begin{align*}
    \alpha_{1} = \sqrt{\frac{2\log \left(\epsilon^{-1}\right)}{n}} \text{ and }
\alpha_{2} = \left( \frac{\log(\epsilon^{-1})}{n} \right)^{1/\beta}.
\end{align*}
Under this design, we evaluate the estimation error of the three joint Catoni methods when samples follow the following distribution pairs:
\begin{itemize}
    \item [(i)] Student t distribution vs. one-sided (half) t distribution with degree of freedom $=2.1$;
    \item[(ii)] symmetric double Pareto distribution vs. (one-sided) Pareto distribution with shape parameter $=2.1$.
\end{itemize}
For each distribution pair, we perform $1,000$ independent trials with $n = 500$ observations per trial. The quantiles of three Catoni joint estimation method at quantile levels between $0.9$ and $1$ are reported in Figure \ref{fig:different Catoni}. From the figure, the three methods exhibit different relative performance under symmetric versus asymmetric settings. When the sample distribution is symmetric, the narrowest Catoni variant consistently performs best, while the widest Catoni variant yields the largest errors. In contrast, when the distribution is asymmetric, this ordering reverses: the widest Catoni becomes the best among the three. The middle Catoni performs in between the narrowest and the widest variants across all cases.

We interpret these patterns as follows. The narrowest Catoni attenuates outliers most aggressively, which most effectively reduces variance under symmetry. However, when the distribution is asymmetric, discarding the magnitude information of outliers introduces substantial bias, thereby worsening overall error. Based on this understanding, we suggest choosing the Catoni function according to the degree of symmetry in the data (e.g., the gap between the mean and the median). If nothing is known about the target distribution’s symmetry, the middle Catoni may be a robust default that provides a balanced trade-off.

Moreover, in linear regression models the noise is typically assumed to be symmetric. However, we argue that symmetry of the noise alone is insufficient to determine which choice is preferable; one must also account for the effect of $\<\boldsymbol{x}_{i},\boldsymbol{\theta}^{*}- \boldsymbol{\theta} \> $. This point needs further investigation.
\begin{figure}[tp]
    \centering
    \begin{minipage}{0.49\linewidth}
        \centering
        \includegraphics[width=\linewidth]{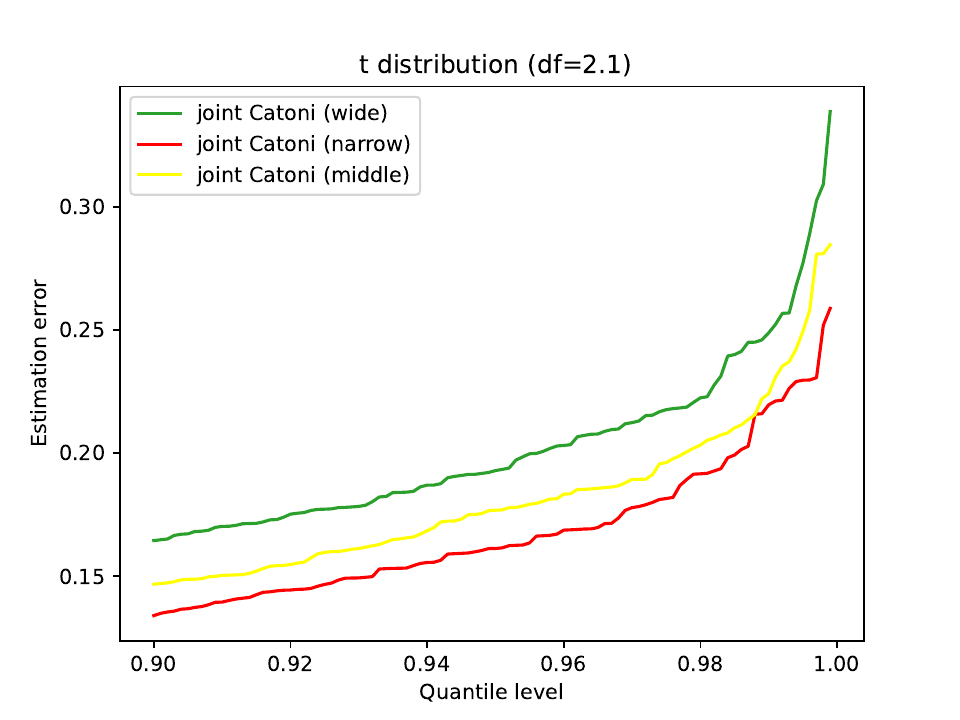}
    \end{minipage} 
    \begin{minipage}{0.49\linewidth}
        \centering
        \includegraphics[width=\linewidth]{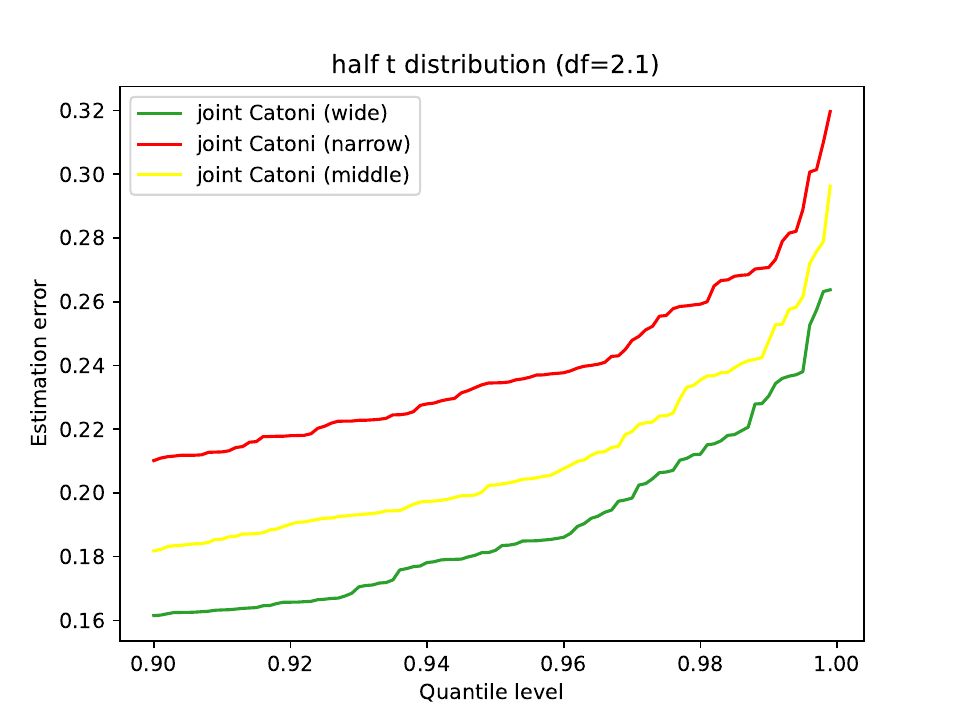}
    \end{minipage} 
    \\
    \begin{minipage}{0.49\linewidth}
        \centering
        \includegraphics[width=\linewidth]{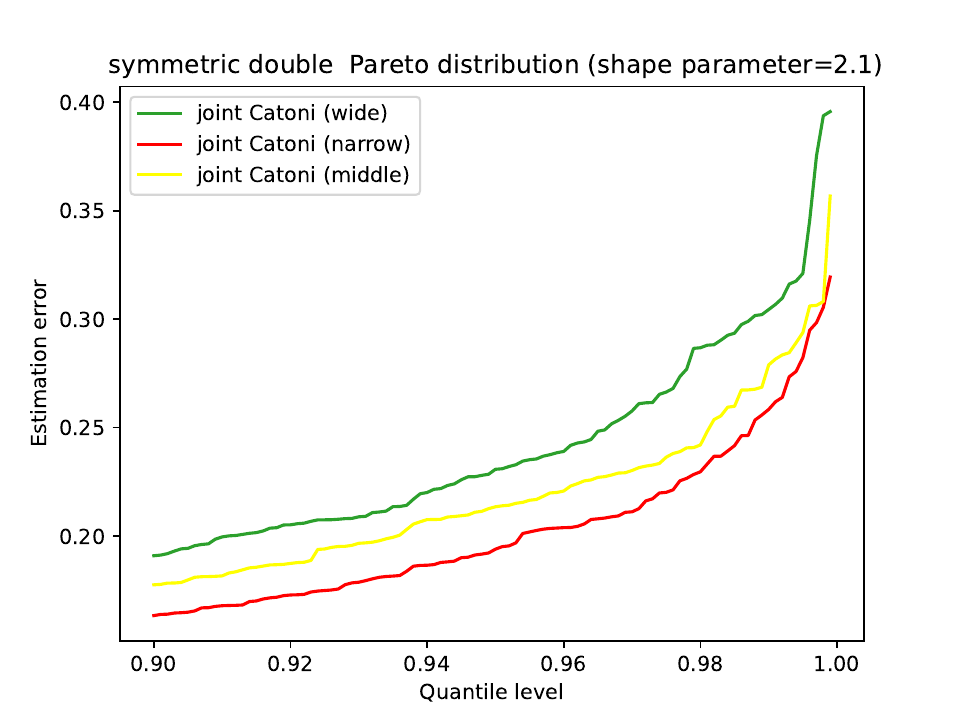}
    \end{minipage} 
    \begin{minipage}{0.49\linewidth}
        \centering
        \includegraphics[width=\linewidth]{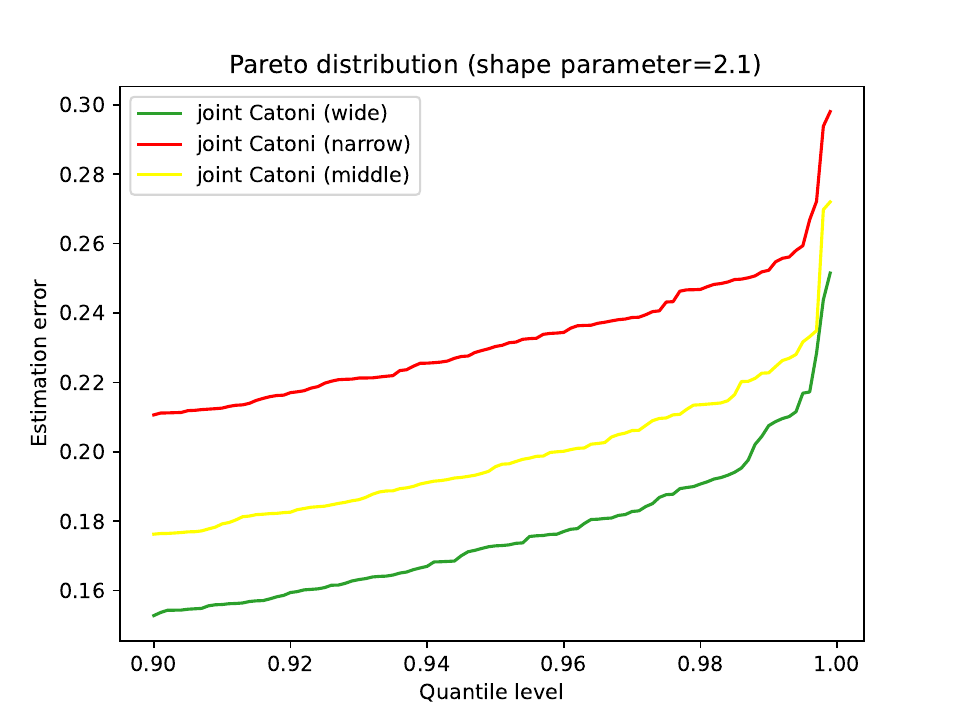}
    \end{minipage} 
    \caption{The $\gamma$-quantile of the estimation error $|\mu-\hat{\theta}|$ (estimation error, $y$-axis) versus $\gamma$ (quantile level, $x$-axis) for three  Catoni-type estimators based on 1000 simulations and $n=500$ for each simulations. }
    \label{fig:different Catoni}
\end{figure}

\clearpage

\appendix

\section{Auxiliary lemmas}
The following lemmas will be frequently used in the proofs of the main results, so we put it in a separate section. 
\begin{lem} \label{lem:root bound}
    Let $g(x):=a|x|^{\beta}+bx+c$ for some $\beta\in (1,2]$. The following results hold:
    \begin{itemize}
        \item[(i)]  Assume $a>0$, $b>0$ and $c>0$. If there exists some $r>\frac{1}{b}$ such that $b-a(cr)^{\beta-1}>\frac{1}{r}$, then the equation $g(x)=0$ has two solutions in which the  larger one $x_{+}$ satisfies
        \begin{align*}
            x_{+}\ge -\frac{c}{b-a(cr)^{\beta-1}}.
        \end{align*}
        \item[(ii)]  Assume $a<0$, $b>0$ and $c<0$. If there exists some $r>\frac{1}{b}$ such that $b+a(-cr)^{\beta-1}>\frac{1}{r}$, then the equation $g(x)=0$ has two solutions in which the  smaller one $x_{-}$ satisfies
        \begin{align*}
            x_{-}\le -\frac{c}{b+a(-cr)^{\beta-1}}.
        \end{align*}
    \end{itemize}
\end{lem}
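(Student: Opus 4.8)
\textbf{Proof proposal for Lemma~\ref{lem:root bound}.}

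The plan is to treat $g(x)=a|x|^\beta+bx+c$ on the half-line where its concavity/convexity is fixed, and to locate a point at which $g$ has the opposite sign from $g(0)$, so that the intermediate value theorem forces a root in a controlled location. Consider part (i): here $g(0)=c>0$. Since $b>0$ and the linear term dominates near $x=0$ on the negative side, $g$ decreases as $x$ moves left from $0$ initially; because $a>0$, for $x$ very negative $g(x)=a|x|^\beta+bx+c\to+\infty$ (as $\beta>1$ the $|x|^\beta$ term wins). Hence $g$ has a minimum on $(-\infty,0)$ and, if that minimum is negative, exactly two real roots, both negative, straddling the minimizer. So the first step is to exhibit a point $x_0<0$ with $g(x_0)\le 0$; the larger root $x_+$ then lies in $(x_0,0)$, i.e. $x_+\ge x_0$. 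I would take $x_0=-c/\big(b-a(cr)^{\beta-1}\big)$, which is well-defined and negative precisely because the hypothesis $b-a(cr)^{\beta-1}>1/r>0$ guarantees a positive denominator.

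The heart of the argument is then the estimate $g(x_0)\le 0$, i.e. $a|x_0|^\beta+bx_0+c\le 0$. Writing $D:=b-a(cr)^{\beta-1}>0$ so that $x_0=-c/D$ and $|x_0|=c/D$, this becomes $a(c/D)^\beta\le (b/D-1)c = (b-D)c/D = a(cr)^{\beta-1}\cdot c/D$. Cancelling $ac/D>0$, the claim reduces to $(c/D)^{\beta-1}\le (cr)^{\beta-1}$, i.e. to $1/D\le r$, i.e. to $D\ge 1/r$ — which is exactly the hypothesis $b-a(cr)^{\beta-1}>1/r$. This is the step I expect to be the main (though not deep) obstacle: one must be careful that the inequality $b-a(cr)^{\beta-1}>1/r$ is used twice — once to ensure the denominator $D$ is positive (so $x_0$ makes sense and is negative), and once, via $x\mapsto x^{\beta-1}$ being increasing on $(0,\infty)$ for $\beta>1$, to turn $D\ge 1/r$ into the sign estimate for $g(x_0)$. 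The condition $r>1/b$ is what makes $cr>c/b$ reasonable and, together with $D>1/r$, keeps everything consistent; I would double-check it is actually needed or is subsumed.

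Part (ii) follows by the symmetric/reflection argument: apply part (i) to $\tilde g(x):=-g(-x)=a|x|^\beta+bx-c$, which has $\tilde a=a<0$ replaced — rather, set $h(x):=g(-x)=a|x|^\beta-bx+c$ and note $h$ has leading coefficient $a<0$. It is cleaner to argue directly: with $a<0$, $b>0$, $c<0$ we have $g(0)=c<0$, $g(x)\to-\infty$ as $x\to\pm\infty$ along the $|x|^\beta$ term (since $a<0$), and on $(0,\infty)$ the linear part pushes $g$ up, so $g$ attains a positive maximum on $(0,\infty)$ when the hypothesis holds, giving two roots there; the \emph{smaller} of the two, $x_-$, satisfies $x_-\le x_0'$ where $g(x_0')\ge 0$. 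Taking $x_0'=-c/\big(b+a(-cr)^{\beta-1}\big)>0$ and repeating the one-line computation above — now with the roles of the signs flipped, using $-c>0$ and the denominator $b+a(-cr)^{\beta-1}>1/r>0$ — yields $g(x_0')\ge 0$, hence $x_-\le x_0'$, which is the asserted bound. I would present (i) in full detail and then indicate (ii) as the mirror image, spelling out only the sign bookkeeping since that is where errors creep in.
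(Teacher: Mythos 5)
Your proposal is correct and follows essentially the same route as the paper: locate the candidate point $-c/(b-a(cr)^{\beta-1})$ (resp. $-c/(b+a(-cr)^{\beta-1})$), show $g$ has the opposite sign there via the monotonicity of $t\mapsto t^{\beta-1}$ and the hypothesis $D>1/r$, and use the unimodality of $g$ on the relevant half-line (from the monotone derivative) to get exactly two roots straddling that point. The only cosmetic difference is that the paper reflects $g$ to $\tilde g(x)=g(-x)$ in part (i) so as to work on the positive axis, whereas you work directly on the negative axis; and your suspicion that $r>1/b$ is subsumed by the main hypothesis is right, since $b-a(cr)^{\beta-1}>1/r$ with $a,c>0$ already forces $b>1/r$.
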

\begin{proof}
The proofs of (i) and (ii) are similar, so we only prove (i). Write $\Tilde{g}(x)=g(-x)=a|x|^{\beta}-bx+c$, it is equivalent to show that $\Tilde{g}$ has two solutions in which the smallest one is smaller or equal to $\frac{c}{b-a(cr)^{\beta-1}}$. It is clear to see that 
    $$\tilde{g}(x) \ge c>0, \ \ \ x \le 0,$$ so all the solutions of $\tilde g(x)=0$ must be positive. Notice that $b-a(cr)^{\beta-1}>\frac{1}{r}$ implies 
    \begin{align*}
        0<\frac{1}{b-a(cr)^{\beta-1}}<r.
    \end{align*}
    Hence,
    \begin{align*}
        \Tilde{g}\left(\frac{c}{b-a(cr)^{\beta-1}} \right)=\frac{ac^{\beta}}{b-a(cr)^{\beta-1}}\left[ \left(\frac{1}{b-a(cr)^{\beta-1}} \right)^{\beta-1}-r^{\beta-1}\right]<0,
    \end{align*}
   and there exists one solution between $0$ and $\frac{c}{b-a(cr)^{\beta-1}}$ and the smallest solution must be smaller than $\frac{c}{b-a(cr)^{\beta-1}}$. On the other hand,  since $\tilde{g}'(x)=a \beta x^{\beta-1}-b$ is increasing for $x \ge 0$ and $\tilde{g}'(0)=-b<0$, $\tilde g(x)=0$ only has two solutions, one between $0$ and $\frac{c}{b-a(cr)^{\beta-1}}$ and the other larger than $\frac{c}{b-a(cr)^{\beta-1}}$.
\end{proof}

 Recall that $\psi_1$ and $\psi_2$ are non-decreasing continuous functions satisfying \eqref{catoni c} and \eqref{catoni c2} respectively. we shall frequently use the following inequality: 
\begin{equation} \label{e:XPsi1Y}
x \psi_1(y) \le |x| \log\left(1+\operatorname{sign}(x) y+\frac{y^2}2\right),\quad \forall  x,y\in \mathds{R}.
\end{equation} 
To see why this inequality holds, note that by \eqref{catoni c}, if $x>0$, we have
    \begin{align*}
        x\psi_{1}(y)\le x\log\left(1+y+\frac{y^{2}}{2}\right),
    \end{align*}
    while if $x\le 0$, we have
    \begin{align*}
        x\psi_{1}(y) \le -x\log\left(1-y+\frac{y^{2}}{2}\right).
    \end{align*}
    
We shall also frequently use the following inequality: 
\begin{align} \label{ineq:1+x+y}
        (1+x+y)\le (1+x)\exp\left(\frac{y}{1+x} \right),\quad \forall  x>-1,y\in \mathds{R},
    \end{align}
 which immediately follows from the elementary inequality $1+a \le e^{a}$ for $a \in \mathds{R}$ with $a=\frac{y}{1+x}$.

\section{Catoni type joint robust mean and variance estimation} \label{Sec:AppB}
 We shall prove Theorem \ref{thm:joint theorem} in this section, which is divided into three subsections: the first outlines the strategy of the proof, and the other two are devoted to proving two auxiliary lemmas.
\subsection{The strategy for the proof of Theorem \ref{thm:joint theorem}}
Our approach primarily relies on the Poincar\'e–Miranda theorem(we refer to \cite{Fr18} for a formulation 
with a viability condition), which is a generalization of the intermediate value theorem.\par
For $\epsilon\in (0,\frac{1}{6})$ and $\alpha_{1}$ and $\alpha_{2}$ defined in \eqref{def:alpha1,2}, let us first define the rectangle $\mathcal{K}$ as the following:  
\begin{align} \label{def:K}
         \mathcal{K}:=\left\{(\theta,v): \mu-\theta_{0} \le \theta\le \mu+\theta_{0}, \ \ v_{0}\le v\le V_{0}   \right\},
\end{align}
where
\begin{align} \label{e:vVTh}
    v_{0}:=(1+c_{0} \alpha_{2}^{\beta-1})^{-\frac{1}{2}}\sigma, \quad \quad V_{0}:= \left(1-c_{0} \alpha_{2}^{\beta-1} \right)^{-\frac{1}{2}}\sigma,\quad \quad \theta_{0}:=\sigma(1+\delta)\alpha_{1},
\end{align}
with
$c_{0}=27+\frac{2^{5\beta-3}}{\beta \sigma^{2\beta}} \left(\frac{m_{2\beta}}{\sigma^{2\beta}}+1 \right)$ and $$\delta:=\max\left\{16M(\sigma^{2}+1)\frac{V_0-v_0}{v_0},\left(\frac{\sigma^{2}}{v_0^{2}}-1\right)(1+4\alpha_{1}^{2}) \right\}=O(\alpha_{2}^{\beta-1}).$$ 
By the definitions in \eqref{def:alpha1,2}, $\alpha_{1}\to 0$ and $\alpha_{2}^{\beta-1}\to 0$ as $\frac{\log(\epsilon^{-1})}{n}\to 0$. Consequently, there always exists a sufficiently large absolute constant $C$ such that $\theta_0\le 1,\delta\le 1$ and $c_0\alpha_{2}^{\beta-1}\le \frac{1}{2}$ provided $n\ge C(1+\log(\epsilon^{-1}))$. In what follows, we shall assume this is the case.

Based on our definitions of $\alpha_1$ and $\alpha_2$, once we prove that \eqref{f1f2} has a solution in $\mathcal{K}$, the error estimation for this solution naturally follows. The boundary $\partial \mathcal K$ of the rectangle $\mathcal K$,
the external normal unit vector $N_{\mathcal K}(\mu,v)$ at the boundary $\partial \mathcal K$ is 
$$N_{\mathcal K}(\theta,v)=\begin{cases} (-1,0), & \ \ \ \theta=\mu-\theta_0, v \in [v_0,V_0];  \\
(1,0), & \ \ \ \theta=\mu+\theta_0, v \in [v_0,V_0];  \\
(0,-1), & \ \ \ \theta \in [\mu-\theta_0,\mu+\theta_0], v=v_0;  \\
(0,1), & \ \ \ \theta \in [\mu-\theta_0,\mu+\theta_0], v=V_0;  \\
\end{cases}$$

\par

{Write $F(\theta,v):=(f_1(\theta,v),f_2(\theta,v))$, by the Poincaré–Miranda theorem, as long as 
\begin{equation} \label{e:BoundaryNegative}
\langle N_{\mathcal K}(\theta,v), F(\theta,v)\rangle \le 0, \ \ \ (\theta, v) \in \partial \mathcal K, 
\end{equation}
then $F(\theta,v)=0$ (i.e. \eqref{f1f2}) has a solution in $\mathcal K$. The condition \eqref{e:BoundaryNegative} is represented as 
the following relations: 
\begin{align} \label{ineq:f1}
        f_{1}(\mu-\theta_{0},v)\ge 0,\quad f_{1}(\mu+\theta_{0},v)\le 0, \quad \forall v\in [v_{0},V_{0}];
\end{align}
 \begin{align} \label{ineq:f2}
        f_{2}(\theta,v_{0})\ge 0,\quad f_{2}\left(\theta,V_{0} \right)\le 0,\quad \forall \theta\in [\mu-\theta_{0},\mu+\theta_{0}] .
    \end{align}
Hence, to prove that there exists a solution to \eqref{f1f2} in the rectangular region $\mathcal{K}$ with probability at least $1-6\epsilon$, it suffices to show that
    \begin{itemize}
        \item[(i)] \eqref{ineq:f1} holds with probability at least $1-4\epsilon$;
        \item[(ii)] \eqref{ineq:f2} holds with probability at least $1-2\epsilon$.
    \end{itemize}
In this two dimensional setting, the conditions \eqref{ineq:f1} and \eqref{ineq:f2} can be illustrated by Figure \ref{f:PMTh} which is very helpful for us to understand the Poincar\'e–Miranda Theorem. 
\begin{figure}[htbp] 
    \centering
    \includegraphics[width=0.7\textwidth]{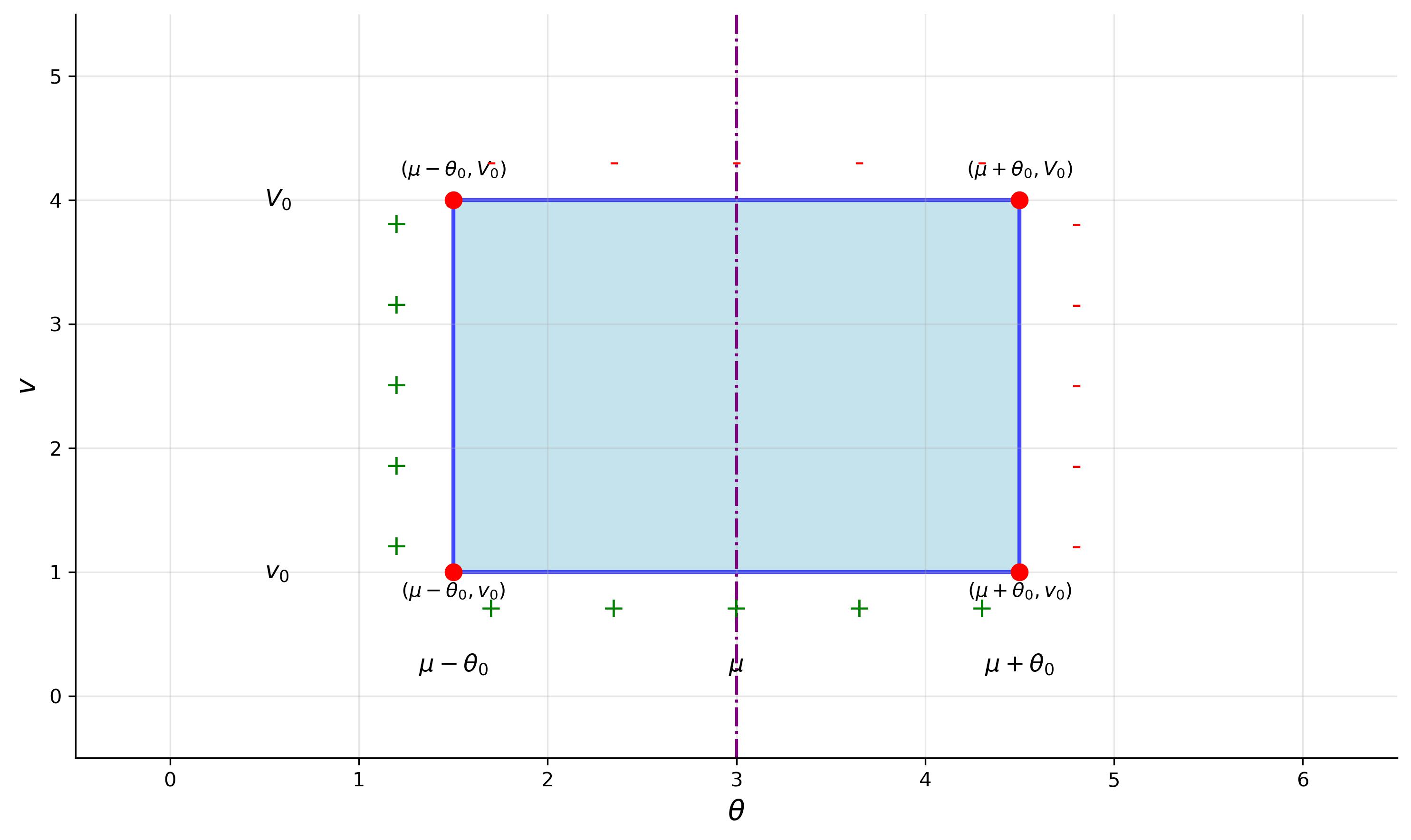}
    \caption{Visualization of $\mathcal K(\theta, v)$ for Poincar\'e–Miranda Theorem}
    \label{f:PMTh}
\end{figure}

\par
 Let us define the following events
\begin{align} \label{def:E1E2}
    \begin{split}
    &\mathcal{E}_{1}^{+}:=\left\{ \sup_{v\in [v_{0},V_{0}]} f_{1}(\mu+\theta_{0},v)\le 0\right\},\ \ \ \ \mathcal{E}_{1}^{-}:=\left\{ \inf_{v\in [v_{0},V_{0}]} f_{1}(\mu-\theta_{0},v)\ge 0\right\},\\
    &\mathcal{E}_{2}^{+}:=\left\{\sup_{\theta\in [\mu-\theta_{0},\mu+\theta_{0}]}f_{2}(\theta,V_{0}) \le 0\right\},\ \ \ \ \mathcal{E}_{2}^{-}:=\left\{ \inf_{\theta\in [\mu-\theta_{0},\mu+\theta_{0}]}f_{2}(\theta,v_{0})\ge 0\right\}.
    \end{split}
\end{align}
The verification of (i) and (ii) is divided into the following two lemmas.
\begin{lem} \label{lem:probE1}
    Under the assumptions and settings of Theorem \ref{thm:joint theorem}, for any $\epsilon\in (0,\frac{1}{6})$, there exist a constant $C:=C(m_{2\beta},\sigma,\beta)$ such that as $n\ge C[\log(\epsilon^{-1})+1]$, the following statement holds:
    \begin{align*}
        \mathbb{P}\left( \mathcal{E}_{1}^{+}\right)\ge 1-2\epsilon,\quad \mathbb{P}\left( \mathcal{E}_{1}^{-}\right)\ge 1-2\epsilon.
    \end{align*}
\end{lem}

\begin{lem} \label{lem:probE2}
     Under the assumptions and settings of Theorem \ref{thm:joint theorem},  for any $\epsilon\in (0,\frac{1}{6})$, there exist a constant $C:=C(m_{2\beta},\sigma,\beta)$ such that as $n\ge C[\log(\epsilon^{-1})+1]$, the following statement holds:
    \begin{align*}
        \mathbb{P}\left( \mathcal{E}_{2}^{+}\right)\ge 1-\epsilon,\quad \mathbb{P}\left( \mathcal{E}_{2}^{-}\right)\ge 1-\epsilon.
    \end{align*}
\end{lem}
The proofs of these two lemmas are placed in the following subsections.
\begin{proof}[Proof of Theorem \ref{thm:joint theorem}]
 Lemmas \ref{lem:probE1} and \ref{lem:probE2} imply (i) and (ii) above respectively.   
Hence, by the Poincar\'e–Miranda Theorem, the conclusions in the theorem hold true.
\end{proof}

\subsection{Proof of Lemma \ref{lem:probE1}} 
We begin with a lemma that serves as a key tool for dealing with the supremum in the probability.
\begin{lem} \label{lem:vfdiff}
    Under the assumptions of Theorem \ref{thm:joint theorem}, for any $|\theta-\mu|\le \theta_0$ it holds with probability at least $1-\epsilon$ that
    \begin{align*}
        \sup_{v_1,v_{2}\in [v_0,V_0]} \left|v_{1}f_{1}(\theta,v_{1})-v_2f_{1}(\theta,v_{2})\right|\le  8M(\sigma^{2}+1)\alpha_{1}^{2}|V_0-v_0|.
    \end{align*}
\end{lem}
\begin{proof}
    Let $g(\theta,v)=vf_{1}(\theta,v)=v\cdot\frac1n\sum_{i}\psi_{1}(\alpha_{1}(X_{i}-\theta)/v)$. Using $\psi_{1}(0)=0$, we have
\[
\partial_{v}g(\theta,v)=-\tfrac{1}{n}\sum_{i=1}^{n}\!\int_{0}^{1}\!\!\!\int_{t}^{1}\tfrac{\alpha_{1}^{2}(X_{i}-\theta)^{2}}{v^{2}}\,\psi_{1}''\!\bigl(s\alpha_{1}\tfrac{X_{i}-\theta}{v}\bigr)\,\mathrm{d}s\,\mathrm{d}t.
\]
 Combining this with Assumption \ref{assumption:c}, we then have, for any $(\theta,v)\in \mathcal{K}$,
    \begin{align} \label{ineq:g'}
        \left| \frac{\partial}{\partial v}g(\theta,v)\right|
        &\le \int_{0}^{1}\int_{t}^{1}  \frac{M}{n}\sum_{i=1}^{n}\left[\frac{\alpha_{1}^{2}(X_i-\theta)^{2}}{v_{0}^{2}}1_{\{s\alpha_{1}|X_i-\theta|\le \sigma\}} +\frac{1}{s^{2}}1_{\{s\alpha_{1}|X_i-\theta|>\sigma\}}\right]\mathrm{d}s\mathrm{d}t.
    \end{align}
    Note that, for any $u>0$,
    \begin{align*}
        \int_{0}^{1}\int_{t}^{1} 1_{\{su\le 1\}} \mathrm{d}s\mathrm{d}t=\frac{1}{2}1_{\{0<u\le 1\}}+\frac{1}{2u^{2}}1_{\{u>1\}},
    \end{align*}
    and
    \begin{align*}
         \int_{0}^{1}\int_{t}^{1} \frac{1}{s^{2}}1_{\{su> 1\}} \mathrm{d}s\mathrm{d}t&=1_{\{u>1\}} \int_{1/u}^{1}\int_{t}^{1} \frac{1}{s^{2}} \mathrm{d}s\mathrm{d}t
         \le \log(u)1_{\{u>1\}}\le \log(1+u^{2}).
    \end{align*}
    Combining this with \eqref{ineq:g'} gives us, for any $|\theta-\mu|\le \theta_0$,
    \begin{align} \label{ineq:g'-2}
        \begin{split}
            \sup_{v\in [v_0,V_{0}]} \left| \frac{\partial}{\partial v}g(\theta,v)\right|
        &\le \frac{M}{n}\sum_{i=1}^{n} \left[\frac{\alpha_{1}^{2}(X_i-\theta)^{2}}{2v_0^{2}}1_{\left\{\alpha |X_i-\theta|\le \sigma\right\} }+\frac{\sigma^{2}}{2v_0^{2}}1_{\left\{\alpha_{1}|X_i-\theta|>\sigma\right\} } \right.\\
        &\qquad\qquad \quad\left.+\log\left(1+\frac{\alpha_{1}^{2}|X_i-\theta|^{2}}{\sigma^{2}}\right)\right].
        \end{split}
    \end{align}
    
 Let us bound the three terms on the right hand of the above inequality.  
Write $Y_{i}=\alpha^{2}(X_i-\theta)^{2}1_{\{\alpha|X_i-\theta|\le \sigma\}}$ and $Z_{i}=1_{\{\alpha|X_i-\theta|>\sigma\}}$. Whenever $\theta_{0}/\sigma=O(\alpha_{1})\le 1$, for any $|\theta-\mu|\le \theta_0$, we have
    \begin{align*}
        \mathbb{E}[Y_i]\le 2\alpha_{1}^{2}\sigma^{2}, \ \ 
        \operatorname{Var}(Y_{i})\le 2\alpha_{1}^{2}\sigma^{4}, \ \ 
        \mathbb{E}[Z_i] \le 2\alpha_{1}^{2},\ \ 
        \operatorname{Var}(Z_i) \le 2\alpha_{1}^{2}.
    \end{align*}
    It follows from Bernstein's inequality with $n\alpha_1^{2}=2 \log (\epsilon^{-1})$ that
    \begin{align*}
            \mathbb{P}\left(\sum_{i=1}^{n}Y_i-\mathbb{E}[Y_i]>3n\alpha_{1}^{2}\sigma^{2} \right)\le \epsilon/3, \ \ \ \ 
        \mathbb{P}\left(\sum_{i=1}^{n}Z_i-\mathbb{E}[Z_i]>3n\alpha_{1}^{2}\right)\le \epsilon/3.
    \end{align*}
   By Markov's inequality and the inequality $1+x \le e^x$, we have
    \begin{align*}
        \mathbb{P}\left(\sum_{i=1}^{n}\log\left(1+\frac{\alpha_{1}^{2}|X_i-\theta|^{2}}{\sigma^{2}}\right)\ge 4n\alpha_{1}^{2}\right) \le \epsilon/3.
    \end{align*}
    Combining these three high-probability upper bounds with \eqref{ineq:g'-2}, we obtain, with probability at least $1-\epsilon$,
    \begin{align*}
         \sup_{v\in [v_0,V_{0}]}\left| \frac{\partial}{\partial v}g(\theta,v)\right|\le 8M(\sigma^{2}+1)\alpha_{1}^{2}.
    \end{align*}
    Since Assumption \ref{assumption:c} implies that for any $\theta$, $g(\theta,v)$ is absolutely continuous with respect to $v$, the desired result follows.
\end{proof}

\begin{proof}[Proof of Lemma \ref{lem:probE1}]
It follows from \eqref{catoni c}, the definition of $f_{1}$ in \eqref{f1f2} and \eqref{Xmoment} that
\begin{align*}
    \mathbb{E}\left[\exp\left(nf_{1}(\theta,v_0)\right)\right]
    &\le \mathbb{E}\left[\prod_{i=1}^{n} \exp\left(\psi_{1}\left(\alpha_{1} \frac{\left( X_{i}-\theta\right)}{v_0} \right) \right)\right]\\
    &\le \left(1+\mathbb{E}\left[\alpha_{1} \frac{(X_{1}-\theta)}{v_0} +\alpha_{1}^{2}\frac{(X_{1}-\theta)^{2}}{ 2v_0^{2}}\right] \right)^{n}\\
    &\le \exp\left[n\alpha_{1}\left(\frac{\mu-\theta}{v_0}+\frac{\alpha_{1}\sigma^{2}}{2v_0^{2}}+\frac{\alpha_{1}|\mu-\theta|^{2}}{2v_0^{2}}\right)   \right].
\end{align*}
Hence, by the Markov's inequality and the definitions of $\theta_0,v_0$ and $\delta$, we have
\begin{align*}
    \mathbb{P}\left(f_{1}(\mu+\theta_{0},v_0)\ge -\frac{\delta}{2} \alpha_{1}^{2}\right)
   & \le \frac{\mathbb{E}\left[e^{nf_{1}(\mu+\theta_0,v_0)}\right]}{e^{-\frac{n\delta}{2} \alpha_{1}^{2}}}\\
    &\le  \exp\left[n\alpha_{1}^{2}\left(-\frac{1}{2}-\frac{\delta}{2}+(\frac{\sigma^{2}}{2v_0^{2}}-\frac{1}{2})(1+4\alpha_{1}^{2})\right) \right]\\
    &\le \exp\left(-\frac{1}{2}n\alpha_{1}^{2}\right)=\epsilon,
\end{align*}
provided $n\ge C(1+\log(\epsilon^{-1}))$ for some $C>0$.
Combining this with Lemma \eqref{lem:vfdiff}, we obtain that, with probability at least $1-2\epsilon$, 
\begin{align*}
    \sup_{v\in [v_0,V_0]} vf_{1}(\mu+\theta_0,v)
    &\le v_{0}f_{1}(\mu+\theta_0,v_0)+\sup_{v\in [v_0,V_0]}\left|v_{0}f_{1}(\mu+\theta_0,v_0)-vf_{1}(\mu+\theta_0,v)\right| \\
    &\le [-\frac{v_0\delta}{2}+8M(\sigma^{2}+1)|V_0-v_0|]\alpha_{1}^{2} \le 0.
\end{align*}
Since $v>0$, the first result follows. The second result of the lemma follows by applying the lower bound in \eqref{catoni c} to 
$-nf_{1}(\theta,v)$ and a similar argument. This completes the proof.
\end{proof}

\subsection{Proof of Lemma \ref{lem:probE2}}
\begin{proof}[Proof of Lemma \ref{lem:probE2}]
 We split the proof into the following two claims: 

\textbf{Claim 1}: There exist a constant $C=C(m_{2\beta},\sigma,\beta)$, such that, as $n \ge C [\log(\epsilon^{-1})+1]$,
\begin{align} \label{ineq:f2upper}
    \mathbb{P} \left(\sup_{\theta\in [\mu-\theta_{0},\mu+\theta_{0}] }f_{2}(\theta,v)\le \alpha_{2}B^{v}_{+}(v)\right)\ge 1-\epsilon,  \quad \forall v>0,
\end{align}
\begin{align} \label{ineq:f2lower}
    \mathbb{P} \left(\inf_{\theta\in [\mu-\theta_{0},\mu+\theta_{0}] }f_{2}(\theta,v)\ge \alpha_{2}B^{v}_{-}(v)\right)\ge 1-\epsilon, \quad \forall v>0,
\end{align}
with
\begin{align} \label{def:Bv}
    \begin{split}
    B^{v}_{+}(v)&:= \frac{\sigma^{2}}{v^{2}}-1+\frac{2\sigma \theta_{0}+\theta_{0}^{2}}{v^{2}}+ \frac{2^{4\beta-3}\alpha_{2}^{\beta-1}}{\beta}\left( \frac{m_{2\beta}+\theta_{0}^{2\beta}+\sigma^{2\beta}}{v^{2\beta}} +\left| \frac{\sigma^{2}}{v^{2}}-1\right|^{\beta}\right)+ \alpha^{\beta-1}_2,\\
    B^{v}_{-}(v)&:= \frac{\sigma^{2}}{v^{2}}-1-\frac{2\sigma \theta_{0}+\theta_{0}^{2}}{v^{2}}- \frac{2^{4\beta-3}\alpha_{2}^{\beta-1}}{\beta}\left( \frac{m_{2\beta}+\theta_{0}^{2\beta}+\sigma^{2\beta}}{v^{2\beta}} +\left| \frac{\sigma^{2}}{v^{2}}-1\right|^{\beta}\right)- \alpha^{\beta-1}_2.
    \end{split}
\end{align}

\textbf{Claim 2}: There exists a $v_+ \in (0,V_0]$ such that, as $n \ge C [\log(\epsilon^{-1})+1]$,
\begin{equation} \label{e:B+v=0}
B^v_{+}(v_{+}) \le 0,
\end{equation}
and there exists a $v_{-} \ge v_0$ such that,  as $n \ge C [\log(\epsilon^{-1})+1]$,
\begin{equation} \label{e:B-v=0}
B^v_{-}(v_{-}) \ge 0.
\end{equation}

 Assume that the above two claims hold true. Let us use \eqref{ineq:f2upper} and \eqref{e:B+v=0} to prove the first inequality in the lemma.     Since $f_{2}(\theta,v)$ is monotonically decreasing with respect to $v$ and $v_+ \le V_0$, we have 
   \begin{align*}
       f_{2}(\theta,V_{0})\le f_{2}(\theta,v_{+}), \quad \forall \theta\in [\mu-\theta_{0},\mu+\theta_{0}],
   \end{align*}
which immediately implies 
$$\mathbb{P}\bigg( \sup_{\theta\in [\mu-\theta_{0},\mu+\theta_{0}]}f_{2}(\theta,V_{0})\le 0  \bigg)\ge \mathbb{P}\left(\sup_{\theta\in [\mu-\theta_{0},\mu+\theta_{0}]}f_{2}(\theta,v_{+})\le 0 \right).$$
By \eqref{e:B+v=0}, we have 
   \begin{align*}
       \mathbb{P}\bigg( \sup_{\theta\in [\mu-\theta_{0},\mu+\theta_{0}]}f_{2}(\theta,V_{0}) \le 0  \bigg)& \ge \mathbb{P}\left(\sup_{\theta\in [\mu-\theta_{0},\mu+\theta_{0}]}f_{2}(\theta,v_{+})\le 0 \right) \\
       & \ge \mathbb{P}\left(\sup_{\theta\in [\mu-\theta_{0},\mu+\theta_{0}]}f_{2}(\theta,v_{+})\le  B^{v}_{+}(v_{+})\right)  \ge 1-\epsilon,
   \end{align*}
where the last inequality is by \eqref{ineq:f2upper}. 

The second inequality in the lemma can be proved by \eqref{ineq:f2lower} and \eqref{e:B-v=0} in the same way as above. It remains to prove the two claims.

\underline{\textbf{Proof of Claim 1}}:
Let us first prove \eqref{ineq:f2upper}.  It follows from \eqref{Xmoment}, \eqref{catoni c2} and the definition of $f_{2}$ in \eqref{f1f2} that
\begin{align} \label{ineq:f2upper'}
    \begin{split}
        &\mathbb{E}\left[\sup_{\theta\in [\mu-\theta_{0},\mu+\theta_{0}]} \left\{\exp\left( n f_{2}(\theta,v)  \right) \right\}\right]\\
    &= \mathbb{E}\left[\sup_{\theta\in [\mu-\theta_{0},\mu+\theta_{0}]}\left\{ \prod_{i=1}^{n} \exp\left( \psi_{2}\left[ \alpha_{2}\left(\frac{(X_{i}-\theta)^{2}}{v^{2}}-1\right)\right] \right) \right\}  \right]\\
    &\le \prod_{i=1}^{n}\mathbb{E} \left[\sup_{\theta\in [\mu-\theta_{0},\mu+\theta_{0}]}\left\{1+\alpha_{2}\left(\frac{(X_{i}-\theta)^{2}}{v^{2}}-1\right)+\frac{\alpha_{2}^{\beta}}{\beta } \left|\frac{(X_{i}-\theta)^{2}}{v^{2}}-1 \right|^{\beta}\right\} \right].
    \end{split}
\end{align} 
By the inequality $(X-\theta)^2 \le  (X-\mu)^2+(\theta-\mu)^2+2 |X-\mu||\theta-\mu|$, we have 
\begin{align} \label{e:SupV1}
\begin{split}
    & \ \ \ \mathbb{E}\left[ \sup_{\theta\in [\mu-\theta_{0},\mu+\theta_{0}]} \left\{\frac{(X_{i}-\theta)^{2}}{v^{2}}-1  \right\} \right] \\
    &\le \mathbb{E}\left[ \sup_{\theta\in [\mu-\theta_{0},\mu+\theta_{0}]} \frac{(X_{i}-\mu)^{2}+(\mu-\theta)^{2}+2|X_i-\mu||\mu- \theta|}{v^{2}}  \right]-1 \\
    & \le \frac{2 \sigma \theta_0+\theta_0^2}{v^2}+\frac{\sigma^{2}}{v^{2}}-1, 
\end{split}
\end{align}
where in the second inequality we have used $\mathbb{E} |X_i-\mu| \le \sigma$. 
On the other hand, by repeatedly using the convexity inequality 
\begin{align} \label{ineq:convexity}
    |x+y|^{\beta}\le 2^{\beta-1} x^{\beta}+2^{\beta-1} y^{\beta}, \forall x,y\in \mathbb{R},
\end{align}
one can deduce that
\begin{align*}
     \left|\frac{(X_{i}-\theta)^{2}}{v^{2}}-1  \right|^{\beta}
    &\le 2^{2\beta-2} \left|\frac{|X_{i}-\theta|^{2\beta}+\sigma^{2\beta}}{v^{2\beta}} \right|+2^{\beta-1}\left|\frac{\sigma^{2}}{v^{2}}-1 \right|^{\beta}\\
    &\le 2^{4\beta-3} \frac{|X_{i}-\mu|^{2\beta}+|\mu-\theta|^{2\beta}}{v^{2\beta}} +\frac{2^{2\beta-2}\sigma^{2\beta}}{v^{2\beta}}+2^{\beta-1}\left|\frac{\sigma^{2}}{v^{2}}-1 \right|^{\beta},
\end{align*}
and thus 
\begin{align*}
    &\mathbb{E}\left[ \sup_{\theta\in [\mu-\theta_{0},\mu+\theta_{0}]} \left|\frac{(X_{i}-\theta)^{2}}{v^{2}}-1  \right|^{\beta}\right]\le 2^{4\beta-3} \left( \frac{m_{2\beta}+\theta_{0}^{2\beta}+\sigma^{2\beta}}{v^{2\beta}} +\left| \frac{\sigma^{2}}{v^{2}}-1\right|^{\beta} \right).
\end{align*}
Combining these two inequalities with \eqref{ineq:f2upper'} and the definition of $B^{v}_{+}$ in \eqref{def:Bv} gives us
\begin{align*}
    &\mathbb{E}\left[\sup_{\theta\in [\mu-\theta_{0},\mu+\theta_{0}]} \left\{\exp\left( n f_{2}(\theta,v)  \right) \right\}\right]\\
    &\le \left[1+\alpha_{2}\left(\frac{2\sigma \theta_{0}+\theta_{0}^{2}}{v^{2}}+\frac{\sigma^{2}}{v^{2}}-1 \right)+\frac{2^{4\beta-3}\alpha_{2}^{\beta}}{\beta}\left( \frac{m_{2\beta}+\theta_{0}^{2\beta}+\sigma^{2\beta}}{v^{2\beta}} +\left| \frac{\sigma^{2}}{v^{2}}-1\right|^{\beta}\right)   \right]^{n}\\
    &\le \exp\left( n\alpha_{2}B^{v}_{+}(v)-\log(\epsilon^{-1}) \right), 
\end{align*}
where the last inequality is by $1+x \le e^{x}$ and $\alpha_2^{\beta-1}=\frac{\log(\epsilon^{-1})}{n\alpha_2}$.
By Markov's inequality, 
 \begin{align*}
     \mathbb{P}\left(\sup_{\theta\in [\mu-\theta_{0},\mu+\theta_{0}]} \left\{\exp\left( n f_{2}(\theta,v)  \right) \right\}\ge \exp\left( n\alpha_{2}B^{v}_{+}(v) \right)\right)\le \epsilon,
 \end{align*}
 which implies \eqref{ineq:f2upper}. 

For the other inequality \eqref{ineq:f2lower}, it can be proved by applying the same argument for $-f_{2}$ in which \eqref{e:SupV1} is replaced with 
\begin{align} \label{e:SupV2}
\begin{split}
    &\mathbb{E}\left[ \sup_{\theta\in [\mu-\theta_{0},\mu+\theta_{0}]} \left\{-\frac{(X_{i}-\theta)^{2}}{v^{2}}+1  \right\} \right] \le \frac{2\sigma \theta_{0}+\theta_{0}^{2}}{v^{2}}+1-\frac{\sigma^{2}}{v^{2}},
\end{split}
\end{align}
 and \eqref{e:SupV2} can be proved in the same way as for proving \eqref{e:SupV1}. So the proof of Claim 1 is finished.\par

\underline{\textbf{Proof of Claim 2}}: 
Writing $t=\frac{\sigma^{2}}{v^{2}}-1$, we have
    \begin{align*}
        B^{v}_{+}(v)
        &\le \frac{2^{4\beta-3}\alpha_{2}^{\beta-1}}{\beta}\left(1+ \frac{2^{\beta-1}\left(m_{2\beta}+\theta_{0}^{2\beta}+\sigma^{2\beta}\right)}{\sigma^{2\beta}}\right)\left| t\right|^{\beta}+\left(1+ \frac{2\sigma \theta_{0}+\theta_{0}^{2}}{\sigma^{2}} \right)t\\
        &\quad +\frac{2^{5\beta-4}\alpha_{2}^{\beta-1}\left( m_{2\beta}+\theta_{0}^{2\beta}+\sigma^{2\beta}\right)}{\beta \sigma^{2\beta}}+\frac{2\sigma \theta_{0}+\theta_{0}^{2}}{\sigma^{2}}+ \alpha_2^{\beta-1}\\
        &:=a|t|^{\beta}+bt+c.
    \end{align*}
 Denote 
$$\bar{B}^v_{+}(t)=a|t|^{\beta}+bt+c,$$
where $t=\frac{\sigma^{2}}{v^{2}}-1$, we have shown 
\begin{equation} \label{e:BvBt}
B^{v}_{+}(v) \le \bar{B}^v_{+}(t).
\end{equation}
Let us find the largest root of $\bar{B}^v_{+}$. We have $a>0,b>0, c>0$, and it is clear that
    \begin{align*}
        a = O\left(\left(\frac{\log(\epsilon^{-1})}{n} \right)^{\frac{\beta-1}{\beta}}\right), \quad b =1+O\left( \sqrt{\frac{\log(n^{\frac{1}{\beta}-\frac{1}{2}}\epsilon^{-1})}{n}} \right), \quad c=O\left( \left(\frac{\log(\epsilon^{-1})}{n} \right)^{\frac{\beta-1}{\beta}} \right).
    \end{align*}
   Hence, there must exist a constant $C=C(m_{2\beta},\sigma,\beta)$ such that, as $n\ge C\left[\log\left(\epsilon^{-1} \right) +1\right]$, 
    \begin{align*}
        b-a\left( 2c\right)^{\beta-1}>\frac{1}{2}.
    \end{align*}
    In this case, Lemma \ref{lem:root bound}(i) with $r=2$ implies that the equation $\bar{B}^{v}(t)=0$ has two roots. Denote the largest one by $t_{+}$ and $$v_{+}:=\frac{\sigma}{\sqrt{1+t_{+}}}$$ (note that the two roots are both large than $-1$ by the definition of $t$). Also by Lemma \ref{lem:root bound}(i) and the definition of $c$ above, one has
    \begin{align*}
        t_{+}&\ge -\frac{c}{b-a( 2c)^{\beta-1}}\ge -2c
        \ge  -\left(2+ \frac{2^{5\beta-3}\left( m_{2\beta}+\theta_{0}^{2\beta}+\sigma^{2\beta}\right)}{\beta \sigma^{2\beta}} \right)\alpha_{2}^{\beta-1}-24\alpha_{1}-72\alpha_{1}^{2},
    \end{align*}
 where we used the definition of $\alpha_1,\alpha_2$ and $\theta_0$ in the last inequality. Since $\alpha_1$ can be bounded in terms of $\alpha_2^{\beta-1}$ up to a constant by their definitions in \eqref{def:alpha1,2},  there exist a constant $C=C(m_{2\beta},\sigma,\beta)$ (larger than the previous $C$) such that, as $n\ge C[\log(\epsilon^{-1})+1]$,
   \begin{align*}
       t_{+}\ge -c_{0}\alpha_{2}^{\beta-1},
   \end{align*}
   which yields 
   \begin{align*}
       v_{+}\le \left(1-c_{0}\alpha_{2}^{\beta-1} \right)^{-\frac{1}{2}}\sigma =V_{0}.
   \end{align*}
By \eqref{e:BvBt}, we have 
$$B^{v}_{+}(v_{+})\le \bar{B}^{v}_{+}(t_{+})=0,$$
 which is exactly equation \eqref{e:B+v=0}.
The \eqref{e:B-v=0} can be proved in the same way. So Claim 2 holds true and the proof is complete.
\end{proof}

\section{Catoni type joint robust regression estimation} \label{Sec:AppC}
We shall prove Theorem \ref{thm:jointly main regression} in this section, which consists of three subsections, one giving the strategy of the proof, the other two proving two auxiliary lemmas.    
\subsection{The strategy for the proof of Theorem \ref{thm:jointly main regression}} 
 For $\epsilon\in (0,\frac{1}{6})$ and 
 $\alpha_1$ and $\alpha_2$ defined in \eqref{def:alpha1,2 re}, let us first define the high dimensional cylinder $\mathcal{K}^{re}$ as the following:
\begin{align} \label{def:Kregression}
    \mathcal{K}^{re}:=\left\{ (\boldsymbol{\theta},v): \|\boldsymbol{\theta}-\boldsymbol{\theta}^{*}\|_{2}\le \theta_{0}, v_{0}\le v\le V_{0} \right\},
\end{align}
with
\begin{equation}\label{def:vVtheta regression}
    \begin{gathered} 
    v_{0}:=\left(1+c_{0} \left(\sqrt{d}\alpha_1+\alpha_{2}^{\beta-1}\right)\right)^{-\frac{1}{2}}\sigma,\quad V_{0}:=\left(1-c_{0} \left(\sqrt{d}\alpha_1+\alpha_{2}^{\beta-1}\right)\right)^{-\frac{1}{2}}\sigma,
 \end{gathered}
\end{equation} 
\begin{align} \label{e:c0}
       \theta_{0}:= \frac{54L\sqrt{d}V_0}{c_{l}}\alpha_{1}, \ \ \ \ c_{0}:=\frac{264c_{u}L}{c_{l}}+\frac{2^{5\beta-2}}{\beta}\left(\frac{m_{2\beta}}{\sigma^{2\beta}}+1 \right).
\end{align}
 As in Section \ref{Sec:AppB}, we assume throughout that our assumptions ensure $\theta_0\le 1$ and $$c_0\left(\sqrt{d}\alpha_{1}+\alpha_{2}^{\beta-1}\right)\le \frac{1}{2}.$$

 Denote by  $\partial \mathcal{K}^{re}$ the boundary of $\mathcal{K}^{re}$, and for any $(\boldsymbol{\theta},v)\in \partial\mathcal{K}^{re}$, denote by $N_{\mathcal{K}^{re}}(\boldsymbol{\theta},v)$ its unit normal vector:
\begin{align*}
        N_{\mathcal{K}^{re}}(\boldsymbol{\theta},v)=\begin{cases}
            (\frac{\boldsymbol{\theta}-\boldsymbol{\theta}^{*}}{\|\boldsymbol{\theta}-\boldsymbol{\theta}^{*}\|_{2}},0), & v\in (v_{0},V_{0}), \|\boldsymbol{\theta}-\boldsymbol{\theta}^{*}\|_{2}=\theta_0;\\
            (\boldsymbol{0},1),& v=V_{0}, \|\boldsymbol{\theta}-\boldsymbol{\theta}^{*}\|_{2}\le \theta_0;\\
            (\boldsymbol{0},-1), &v=v_{0}, \|\boldsymbol{\theta}-\boldsymbol{\theta}^{*}\|_{2} \le \theta_0.
        \end{cases}
    \end{align*}
By the definition of the set $\mathcal{K}^{re}$, to prove Theorem \ref{thm:jointly main regression}, it suffices to prove that  \eqref{f1f2regression} admits a solution in $\mathcal{K}^{re}$ with high probability. To this end, we shall use 
the generalized Poincar\'e–Miranda theorem in \cite[Theorem 2.4]{Fr18}, which is a multidimensional intermediate value theorem. 

 Write $$\tilde{F}(\boldsymbol{\theta},v):=(\tilde{f}_{1}(\boldsymbol{\theta},v),\tilde{f}_{2}(\boldsymbol{\theta},v)), \ \ \ \forall \ (\boldsymbol{\theta},v)\in \mathcal{K}^{re},$$
where $\tilde{f}_{1}$ and $\tilde{f}_{2}$ are defined in \eqref{f1f2regression}.  
According to the generalized Poincar\'e–Miranda theorem, if we can verify that
    \begin{align} \label{e:NormCon}
        \<N_{\mathcal{K}^{re}}(\boldsymbol{\theta},v), \tilde{F}(\boldsymbol{\theta},v)   \>\le 0, \ \ \ \ \ \forall \ (\boldsymbol{\theta},v)\in \partial \mathcal{K}^{re},
    \end{align}
    then $\tilde{F}(\boldsymbol{\theta},v)=0$ has solutions in $\mathcal{K}^{re}$, i.e., the solutions to the regression problem \eqref{f1f2regression}.
 The condition \eqref{e:NormCon} in our setting reads as 
        \begin{align} \label{ineq:tf1}
           \<\tilde{f}_{1}(\boldsymbol{\theta},v), \boldsymbol{\theta}-\boldsymbol{\theta}^{*}\>\le 0,\quad \forall v\in (v_{0},V_{0}), \boldsymbol{\theta}\in \partial\mathcal{B}_{\theta_{0}}(\boldsymbol{\theta}^{*}),
        \end{align}
        \begin{align} \label{ineq:tf2}
            \tilde{f}_{2}(\boldsymbol{\theta},v_{0})\ge 0,\quad \tilde{f}_{2}(\boldsymbol{\theta},V_{0})\le 0,\quad \forall \boldsymbol{\theta}\in \mathcal{B}_{\theta_{0}}(\boldsymbol{\theta}^{*}),
        \end{align}
    where $\mathcal{B}_{\theta_{0}}(\boldsymbol{\theta}^{*}):=\{\boldsymbol{\theta}:\|\boldsymbol{\theta}-\boldsymbol{\theta}^{*}\|_{2}\le \theta_{0}\}$ and  $\partial\mathcal{B}_{\theta_{0}}(\boldsymbol{\theta}^{*})$ is the boundary of $\mathcal{B}_{\theta_{0}}(\boldsymbol{\theta}^{*})$.
    \vskip 2mm
 It is easy to see that \eqref{ineq:tf1} is equivalent to the following event $\mathcal{E}_{1}$ and that two inequalities in \eqref{ineq:tf2} are equivalent to the following events $\mathcal{E}_{2}^{+}$ and $\mathcal{E}_{2}^{-}$.  
\begin{align} \label{def:E1E2regression}
    \begin{split}
    &\mathcal{E}_{1}:=\left\{ \sup_{v\in [v_{0},V_{0}]} \sup_{\boldsymbol{\theta}\in \partial \mathcal{B}_{\theta_{0}}(\boldsymbol{\theta}^{*})} \<\tilde{f}_{1}(\boldsymbol{\theta},v), \boldsymbol{\theta}-\boldsymbol{\theta}^{*}\>\le 0\right\},\\
    &\mathcal{E}_{2}^{+}:=\left\{\sup_{\boldsymbol{\theta}\in \mathcal{B}_{\theta_{0}}(\boldsymbol{\theta}^{*})}\tilde{f}_{2}(\boldsymbol{\theta},V_{0}) \le 0\right\}, \ \ \ \ \mathcal{E}_{2}^{-}:=\left\{ \inf_{\boldsymbol{\theta}\in \mathcal{B}_{\theta_{0}}(\boldsymbol{\theta}^{*})}\tilde{f}_{2}(\boldsymbol{\theta},v_{0})\ge 0\right\}.
    \end{split}
\end{align}
We shall prove that the conditions \eqref{ineq:tf1} and \eqref{ineq:tf2} hold with high probability, which are stated in the following two lemmas more precisely. 
\begin{lem} \label{lem:probE1 re}
    Under the assumptions and settings of Theorem \ref{thm:jointly main regression},  for any $\epsilon\in (0,\frac{1}{6})$, there exist a constant $C:=C(m_{2\beta},\sigma,\beta,c_{l},  c_{u}, L,M)$ such that as 
      $n\ge Cd [\log(\epsilon^{-1})+1]$, 
     we have
    \begin{align*}
        \mathbb{P}\left( \mathcal{E}_{1}\right)\ge 1-4\epsilon.
    \end{align*}
\end{lem}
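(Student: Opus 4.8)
The plan is to invoke the generalized Poincar\'e--Miranda theorem exactly as set up in the excerpt, so that everything reduces to proving $\mathbb{P}(\mathcal{E}_{1})\ge 1-4\epsilon$, with $\mathcal{E}_1$ as in \eqref{def:E1E2regression}. Writing $\boldsymbol{\delta}:=\boldsymbol{\theta}-\boldsymbol{\theta}^{*}$ (so $\|\boldsymbol{\delta}\|_{2}=\theta_{0}$ on $\partial\mathcal{B}_{\theta_{0}}(\boldsymbol{\theta}^{*})$) and using $y_{i}-\langle\boldsymbol{x}_{i},\boldsymbol{\theta}\rangle=\varepsilon_{i}-\langle\boldsymbol{x}_{i},\boldsymbol{\delta}\rangle$, I would first split
\[
\langle \tilde{f}_{1}(\boldsymbol{\theta},v),\boldsymbol{\delta}\rangle=-D(\boldsymbol{\delta},v)+\langle\mathbf{g}(v),\boldsymbol{\delta}\rangle,
\]
where $\mathbf{g}(v):=\frac1n\sum_{i=1}^{n}\boldsymbol{x}_{i}\psi_{1}(\alpha_{1}\varepsilon_{i}/v)$ does not depend on $\boldsymbol{\theta}$, and
\[
D(\boldsymbol{\delta},v):=\frac1n\sum_{i=1}^{n}\langle\boldsymbol{x}_{i},\boldsymbol{\delta}\rangle\left[\psi_{1}\!\left(\tfrac{\alpha_{1}\varepsilon_{i}}{v}\right)-\psi_{1}\!\left(\tfrac{\alpha_{1}(\varepsilon_{i}-\langle\boldsymbol{x}_{i},\boldsymbol{\delta}\rangle)}{v}\right)\right].
\]
Since $\psi_{1}$ is non-decreasing, each summand of $D$ has the sign of $\langle\boldsymbol{x}_{i},\boldsymbol{\delta}\rangle^{2}$, so $D(\boldsymbol{\delta},v)\ge 0$ \emph{deterministically}; hence $\langle\tilde{f}_{1}(\boldsymbol{\theta},v),\boldsymbol{\delta}\rangle\le 0$ will follow once the ``noise'' term $\langle\mathbf{g}(v),\boldsymbol{\delta}\rangle$ is shown to be dominated by the ``drift'' $D(\boldsymbol{\delta},v)$, uniformly over $\|\boldsymbol{\delta}\|_{2}=\theta_{0}$ and $v\in[v_{0},V_{0}]$.

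For the noise term I would use $\langle\mathbf{g}(v),\boldsymbol{\delta}\rangle\le\theta_{0}\|\mathbf{g}(v)\|_{2}\le\theta_{0}\sqrt{d}\,\|\mathbf{g}(v)\|_{\infty}$, so it suffices to control $\|\mathbf{g}(v)\|_{\infty}=\max_{1\le k\le d}|g_{k}(v)|$ uniformly in $v$. For each coordinate $k$ and each sub-interval $\mathcal{A}_{j}$ of the partition of $[v_{0},V_{0}]$, I would bound $\mathbb{E}[\sup_{v\in\mathcal{A}_{j}}\exp(\pm\lambda n g_{k}(v))]$ via the Catoni inequality \eqref{e:XPsi1Y} (with the free parameter $\lambda$ allowed up to $1/L$ because $|x_{ik}|\le L$), the bound $1+x\le e^{x}$, and Lemma~\ref{lem:supv(X-theta)} to absorb the supremum over $v\in\mathcal{A}_{j}$; a union bound over the $2dn_{0}$ events then gives, with probability at least $1-2\epsilon$,
\[
\sup_{v\in[v_{0},V_{0}]}\|\mathbf{g}(v)\|_{\infty}\lesssim\alpha_{1}^{2}\Big(\tfrac{\sigma^{2}\sqrt{c_{u}}}{v_{0}^{2}}+L\Big),
\]
the crucial point being that this costs only $\log(2dn_{0}/\epsilon)$ in the exponent (a factor $d$ \emph{inside} a logarithm), which is $\asymp n\alpha_{1}^{2}$ by the definition \eqref{def:alpha1,2 re} of $\alpha_{1}$. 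Hence $\langle\mathbf{g}(v),\boldsymbol{\delta}\rangle\lesssim\theta_{0}\sqrt{d}\,\alpha_{1}^{2}(\sigma^{2}c_{u}^{1/2}v_{0}^{-2}+L)$.

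For the drift I would first show, for fixed $\boldsymbol{\delta},v$, that $\mathbb{E}[D(\boldsymbol{\delta},v)]\gtrsim\frac{\alpha_{1}}{v}\boldsymbol{\delta}^{T}\mathbf{S}_{n}\boldsymbol{\delta}\ge\frac{\alpha_{1}c_{l}}{v}\theta_{0}^{2}$ by Assumption~\ref{A1}. Then a lower-tail estimate (exponential moment/Markov applied to $-nD$, using $D\ge 0$, per-term boundedness up to logarithmic factors, and Assumption~\ref{A1} to bound the variance proxy by $\lesssim \alpha_{1}^{2}L^{2}c_{u}\theta_{0}^{4}v^{-2}n$), made uniform over the $\theta_{0}$-sphere via a $\tfrac12$-net $\mathcal{N}$ of $\mathbb{S}^{d-1}$ (of cardinality $\le 6^{d}$) and over $v$ via the $\mathcal{A}_{j}$-partition, yields, with probability at least $1-\epsilon$,
\[
\inf_{\|\boldsymbol{\delta}\|_{2}=\theta_{0}}\ \inf_{v\in[v_{0},V_{0}]}D(\boldsymbol{\delta},v)\ \ge\ \tfrac12\,\mathbb{E}[D]\ \gtrsim\ \tfrac{\alpha_{1}c_{l}}{v}\theta_{0}^{2}.
\]
Here the net costs a factor $6^{d}$, and this is exactly where $n\ge Cd[\log(\epsilon^{-1})+1]$ is used: the concentration exponent is $\gtrsim nc_{l}^{2}/(L^{2}c_{u})\gtrsim d[\log(\epsilon^{-1})+1]$, which beats $\log(6^{d}n_{0}\epsilon^{-1})$. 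Combining on the intersection of the events above (total probability $\ge 1-4\epsilon$, the fourth $\epsilon$ coming from an auxiliary event $\{\max_{i}|\varepsilon_{i}|\le (nm_{2\beta}\epsilon^{-1})^{1/(2\beta)}\}$ used below), for every $\|\boldsymbol{\delta}\|_{2}=\theta_{0}$ and $v\in[v_{0},V_{0}]$,
\[
\langle\tilde{f}_{1}(\boldsymbol{\theta},v),\boldsymbol{\delta}\rangle\le -D(\boldsymbol{\delta},v)+\langle\mathbf{g}(v),\boldsymbol{\delta}\rangle\le -c\,\tfrac{\alpha_{1}c_{l}}{v}\theta_{0}^{2}+C'\theta_{0}\sqrt{d}\,\alpha_{1}^{2}\big(\tfrac{\sigma^{2}\sqrt{c_{u}}}{v_{0}^{2}}+L\big)\le 0,
\]
because the choice $\theta_{0}=132L\sqrt{d}\,\sigma\,\alpha_{1}/c_{l}$ in \eqref{def:vVtheta regression} makes the negative term dominate (the constant $132$ is exactly large enough given $c_{u}\le L^{2}$ and $v_{0}\asymp\sigma$). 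This gives $\mathcal{E}_{1}$.

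I expect the main obstacle to be the drift lower bound $\mathbb{E}[D(\boldsymbol{\delta},v)]\gtrsim\frac{\alpha_{1}}{v}\boldsymbol{\delta}^{T}\mathbf{S}_{n}\boldsymbol{\delta}$ together with its uniformization over the sphere. The bound on the expectation is delicate because one has only the Catoni sandwich \eqref{catoni c} (no differentiability or convexity of $\psi_{1}$), so one must quantify that $\psi_{1}$, being non-decreasing and pinched between $-\log(1-x+x^{2}/2)$ and $\log(1+x+x^{2}/2)$ which are both tangent to $x\mapsto x$ at $0$, cannot be ``too flat near the origin on average'', while discarding the rare $\varepsilon_{i}$ whose rescaled value $\alpha_{1}\varepsilon_{i}/v$ falls in the saturated logarithmic r\'egime of $\psi_{1}$ by using the finite-moment assumption $m_{2\beta}<\infty$ in \eqref{noise2} (hence the auxiliary event on $\max_{i}|\varepsilon_{i}|$). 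The uniformization is delicate because the $6^{d}$-net must be compensated by the concentration rate, which is precisely why $d$ enters the sample-size threshold; the uniformity in $v$ through $\{\mathcal{A}_{j}\}_{j\le n_{0}}$ and Lemma~\ref{lem:supv(X-theta)} is handled exactly as in the proof of Lemma~\ref{lem:probE1}.
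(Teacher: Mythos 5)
Your overall architecture (reduce to $\mathbb{P}(\mathcal{E}_1)\ge 1-4\epsilon$, split $\langle\tilde f_1(\boldsymbol{\theta},v),\boldsymbol{\theta}-\boldsymbol{\theta}^*\rangle$ into a nonpositive ``drift'' and a $\boldsymbol{\theta}$-free ``noise'' term, control the noise in $\ell_\infty$ by a union bound over $d\, n_0$ exponential-moment events) matches the paper, and your noise bound is essentially the paper's event $\mathcal{E}_{1,2}$ with the same $\log(dn_0/\epsilon)\asymp n\alpha_1^2$ accounting. Where you genuinely diverge is the drift. The paper never passes through $\mathbb{E}[D]$ or a covering net of the sphere: it bounds each summand $r_i^{(1)}$ \emph{deterministically} on the event $\{|\varepsilon_i|\le v/\alpha_1\}$ by expanding $\log\bigl[(1+s(e-c)+\cdot)(1-se+\cdot)\bigr]$ explicitly, extracting the negative term $-\tfrac{\alpha_1}{3v}\langle\boldsymbol{x}_i,\boldsymbol{\theta}-\boldsymbol{\theta}^*\rangle^2$ per sample plus remainders $\log(1+2\alpha_1^2\varepsilon_i^2/v^2)$ and $1_{\{|\varepsilon_i|\ge v_0/\alpha_1\}}$ that do not depend on $\boldsymbol{\theta}$; Assumption \ref{A1} then gives $-\tfrac{\alpha_1 c_l}{3v}\theta_0^2$ uniformly over the whole sphere for free, and the two remainders are handled by the simple scalar events $\mathcal{E}_{1,1}$ and $\mathcal{E}_{1,3}$ (a product exponential-moment bound and Bernstein for Bernoullis). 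Your alternative --- lower-bound $\mathbb{E}[D]$, Bernstein-concentrate $D$, and pay a $6^d$ net --- is a standard and dimensionally consistent route (the exponent $\asymp nc_l^2/(L^2c_u)$ indeed beats $d\log 6+\log(n_0\epsilon^{-1})$ under $n\ge Cd[\log(\epsilon^{-1})+1]$, and your variance proxy is right), but it buys nothing here and leaves two steps under-specified that the paper's pointwise algebra resolves automatically: (i) the quantitative lower bound on $\mathbb{E}[D]$, which ultimately requires the very same expansion of the product of the two Catoni envelopes (the sandwich alone gives increments that can be negative, e.g.\ $-\log(1+y^4/4)$ at $y=y'$, so the cross-terms must be tracked exactly as in \eqref{ineq:r(1)upper-3}); and (ii) the Lipschitz control of $D$ in $\boldsymbol{\delta}$ needed to transfer from the net to the sphere, which is not immediate since $\psi_1$ is only assumed continuous and non-decreasing --- it follows from monotonicity plus the sandwich on your truncation event, but this has to be written out. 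In short: correct strategy, same noise analysis, but a heavier-than-necessary (though viable) treatment of the drift whose two key sub-steps are the parts you would still need to prove in detail.
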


\begin{lem} \label{lem:probE2 re}
     Under the assumptions and settings of Theorem \ref{thm:jointly main regression},  for any $\epsilon\in (0,\frac{1}{6})$, there exist a constant $C:=C(m_{2\beta},\sigma,\beta,c_{l},  c_{u}, L)$ such that as 
      $n\ge Cd [\log(\epsilon^{-1})+1],$ 
     we have
    \begin{align*}
        \mathbb{P}\left( \mathcal{E}_{2}^{+}\right)\ge 1-\epsilon,\quad \mathbb{P}\left( \mathcal{E}_{2}^{-}\right)\ge 1-\epsilon.
    \end{align*}
\end{lem}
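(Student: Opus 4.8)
The plan is to run exactly the two–claim argument used for Lemma~\ref{lem:probE2}, with the single new feature that the residual now carries a deterministic bias term. Write $r_{i}(\boldsymbol{\theta}):=y_{i}-\<\boldsymbol{x}_{i},\boldsymbol{\theta}\>=\varepsilon_{i}+\<\boldsymbol{x}_{i},\boldsymbol{\theta}^{*}-\boldsymbol{\theta}\>$, so that by Cauchy--Schwarz and \eqref{e:LMax} we have $|\<\boldsymbol{x}_{i},\boldsymbol{\theta}^{*}-\boldsymbol{\theta}\>|\le L\theta_{0}$ uniformly over $\boldsymbol{\theta}\in\mathcal{B}_{\theta_{0}}(\boldsymbol{\theta}^{*})$. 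I would first prove an algebraic comparison bound: there is a constant $C=C(m_{2\beta},\sigma,\beta,c_{l},c_{u},L)$ so that for $n\ge Cd[\log(\epsilon^{-1})+1]$ and every $v>0$,
\begin{align*}
\mathbb{P}\Big(\sup_{\boldsymbol{\theta}\in\mathcal{B}_{\theta_{0}}(\boldsymbol{\theta}^{*})}\tilde f_{2}(\boldsymbol{\theta},v)\le\alpha_{2}\tilde B^{v}_{+}(v)\Big)\ge 1-\epsilon,\qquad
\mathbb{P}\Big(\inf_{\boldsymbol{\theta}\in\mathcal{B}_{\theta_{0}}(\boldsymbol{\theta}^{*})}\tilde f_{2}(\boldsymbol{\theta},v)\ge\alpha_{2}\tilde B^{v}_{-}(v)\Big)\ge 1-\epsilon,
\end{align*}
where $\tilde B^{v}_{\pm}$ have the same shape as the functions in \eqref{def:Bv}, with $\theta_{0}$ replaced by $L\theta_{0}$ wherever it enters as a displacement size. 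Then I would show $\tilde B^{v}_{+}$ has a root $v_{+}\le V_{0}$ and $\tilde B^{v}_{-}$ a root $v_{-}\ge v_{0}$, and conclude by the monotonicity of $\tilde f_{2}(\boldsymbol{\theta},\cdot)$, exactly as in the passage following \eqref{def:Bv}.

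For the comparison bound I would use the exponential Markov method. Since $\psi_{2}$ satisfies \eqref{catoni c2}, one has $\exp(\psi_{2}(x))\le 1+x+|x|^{\beta}/\beta$, and pulling the supremum over $\boldsymbol{\theta}$ inside the product (as $\exp$ is increasing) gives
\begin{align*}
\mathbb{E}\Big[\sup_{\boldsymbol{\theta}}\exp\big(n\tilde f_{2}(\boldsymbol{\theta},v)\big)\Big]\le\prod_{i=1}^{n}\mathbb{E}\Big[\sup_{\boldsymbol{\theta}}\Big(1+\alpha_{2}\big(\tfrac{r_{i}(\boldsymbol{\theta})^{2}}{v^{2}}-1\big)+\tfrac{\alpha_{2}^{\beta}}{\beta}\big|\tfrac{r_{i}(\boldsymbol{\theta})^{2}}{v^{2}}-1\big|^{\beta}\Big)\Big].
\end{align*}
The two suprema are controlled by the pointwise bounds $r_{i}(\boldsymbol{\theta})^{2}\le\varepsilon_{i}^{2}+(L\theta_{0})^{2}+2|\varepsilon_{i}|L\theta_{0}$ and $r_{i}(\boldsymbol{\theta})^{2}\ge\varepsilon_{i}^{2}-2|\varepsilon_{i}|L\theta_{0}$, the convexity inequality \eqref{ineq:convexity}, and the moment conditions \eqref{noise2} together with $\mathbb{E}|\varepsilon_{i}|\le\sigma$, in the same way as \eqref{e:SupV1}--\eqref{e:SupV2} in the mean case. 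This yields $\mathbb{E}[\sup_{\boldsymbol{\theta}}\exp(n\tilde f_{2}(\boldsymbol{\theta},v))]\le\exp(n\alpha_{2}\tilde B^{v}_{+}(v)-\log(\epsilon^{-1}))$ using $\alpha_{2}^{\beta-1}=\log(\epsilon^{-1})/(n\alpha_{2})$, and Markov's inequality gives the first estimate; the second follows by applying the same argument to $-\tilde f_{2}$.

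For the root statement I would substitute $t=\sigma^{2}/v^{2}-1$, which turns $\tilde B^{v}_{+}$ into $a|t|^{\beta}+bt+c$ with $a>0$, $b=1+O(\sqrt d\,\alpha_{1})>0$ and $c>0$. The key observation is that, since $\theta_{0}=\tfrac{132L\sqrt d\,\sigma}{c_{l}}\alpha_{1}$ by \eqref{def:vVtheta regression}, the bias contributions (of order $\theta_{0}/v^{2}$ and $\theta_{0}^{2}/v^{2}$) are $O(\sqrt d\,\alpha_{1})$ once $n\ge Cd[\log(\epsilon^{-1})+1]$ forces $\sqrt d\,\alpha_{1}\le 1$, so that $c=O(\sqrt d\,\alpha_{1}+\alpha_{2}^{\beta-1})$. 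For $n$ large one checks $b-a(2c)^{\beta-1}>\tfrac12$, so Lemma~\ref{lem:root bound}(i) with $r=2$ applies: the largest root satisfies $t_{+}\ge-2c\ge-c_{0}(\sqrt d\,\alpha_{1}+\alpha_{2}^{\beta-1})$ with $c_{0}$ as in \eqref{e:c0}, whence $v_{+}:=\sigma/\sqrt{1+t_{+}}\le V_{0}$ and $\tilde B^{v}_{+}(v_{+})\le 0$. Writing $\tilde B^{v}_{-}$ as $a|t|^{\beta}+bt+c$ with $a<0$, $b>0$, $c<0$ and invoking Lemma~\ref{lem:root bound}(ii) similarly produces $v_{-}\ge v_{0}$ with $\tilde B^{v}_{-}(v_{-})\ge 0$.

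Finally, since $\tilde f_{2}(\boldsymbol{\theta},\cdot)$ is decreasing and $v_{+}\le V_{0}$, we get $\tilde f_{2}(\boldsymbol{\theta},V_{0})\le\tilde f_{2}(\boldsymbol{\theta},v_{+})$ for every $\boldsymbol{\theta}$, so $\mathbb{P}(\mathcal{E}_{2}^{+})\ge\mathbb{P}(\sup_{\boldsymbol{\theta}}\tilde f_{2}(\boldsymbol{\theta},v_{+})\le\alpha_{2}\tilde B^{v}_{+}(v_{+}))\ge 1-\epsilon$, and symmetrically $\mathbb{P}(\mathcal{E}_{2}^{-})\ge 1-\epsilon$ using $v_{-}\ge v_{0}$. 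I expect the main obstacle to be the bookkeeping in the root step: one must verify that, after $\theta_{0}$ is inflated by the factor $L\sqrt d/c_{l}$ (the size forced later by Lemma~\ref{lem:probE1 re}), all bias terms remain within the $O(\sqrt d\,\alpha_{1}+\alpha_{2}^{\beta-1})$ budget with a total constant no larger than the $c_{0}$ declared in \eqref{e:c0} and the $\theta_{0}$ of \eqref{def:vVtheta regression}, and that the hypothesis $b-a(2c)^{\beta-1}>\tfrac12$ of Lemma~\ref{lem:root bound} holds for $n\ge Cd[\log(\epsilon^{-1})+1]$. By contrast the exponential-moment step is essentially a transcription of the mean case: because $\tilde f_{2}$ depends on $\boldsymbol{\theta}$ only through $r_{i}(\boldsymbol{\theta})^{2}$, whose supremum over $\mathcal{B}_{\theta_{0}}(\boldsymbol{\theta}^{*})$ admits the clean deterministic bound above, no covering of the ball is needed — in sharp contrast to the treatment of $\tilde f_{1}$ in Lemma~\ref{lem:probE1 re}.
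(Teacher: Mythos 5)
Your proposal is correct and follows the same two-claim architecture as the paper's proof (high-probability comparison functions $\tilde B^{v}_{\pm}$ obtained by an exponential-moment/Markov argument, root location via Lemma \ref{lem:root bound} after the substitution $t=\sigma^{2}/v^{2}-1$, then monotonicity of $\tilde f_{2}(\boldsymbol{\theta},\cdot)$ in $v$), but it diverges in how the supremum over $\boldsymbol{\theta}$ is controlled inside the exponential moment. The paper does not use the worst-case bound $|\langle\boldsymbol{x}_{i},\boldsymbol{\theta}^{*}-\boldsymbol{\theta}\rangle|\le L\theta_{0}$ for the quadratic cross term; instead it applies the weighted Young inequality \eqref{ineq:tf2-1}, $(\varepsilon_{i}+u)^{2}\le(1+\theta_{0})\varepsilon_{i}^{2}+(1+\theta_{0}^{-1})u^{2}$, and then the separation inequality \eqref{ineq:1+x+y} to factor each term into a purely stochastic piece $I^{(1)}_{i}$ and a deterministic piece $I^{(2)}_{i}$; the latter is summed over $i$ and bounded through $\lambda_{\max}(\mathbf{S}_{n})\le c_{u}$, giving a bias of order $(1+c_{u}/\sigma^{2})\theta_{0}$. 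Your route — literally the mean-estimation proof of Lemma \ref{lem:probE2} with $\theta_{0}$ replaced by $L\theta_{0}$ — is simpler and avoids Assumption \ref{A1} entirely in this step, but the price is a bias constant of order $L\theta_{0}/\sigma\asymp L^{2}\sqrt{d}\alpha_{1}/c_{l}$ rather than the paper's $c_{u}L\sqrt{d}\alpha_{1}/c_{l}$, i.e.\ worse by a factor of order $L/c_{u}$ (which can be as large as $\sqrt{d}$ for isotropic designs). Since the events $\mathcal{E}_{2}^{\pm}$ are pinned to the specific $c_{0}$ of \eqref{e:c0} through $v_{0},V_{0}$ in \eqref{def:vVtheta regression}, your argument as written establishes the lemma only after enlarging $c_{0}$ to accommodate the $L^{2}$ factor (harmless for Theorem \ref{thm:jointly main regression}, since $C_{2}$ is allowed to depend on $L$, but it does change the stated constant), or under the extra hypothesis $c_{u}\gtrsim L$; you correctly flag this bookkeeping issue but should resolve it by adjusting $c_{0}$ rather than hoping the declared value suffices. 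The remaining steps — positivity of the factors when pulling the supremum through the product, the convexity chain for the $\beta$-th moment term, and the application of Lemma \ref{lem:root bound} with $r=2$ — all go through as you describe.
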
  
The proofs of these two lemmas are placed in the following subsections. With these two lemmas, we are at the position to prove the main result. 
\begin{proof}[Proof of Theorem \ref{thm:jointly main regression}]
By \eqref{def:vVtheta regression} and \eqref{e:c0}, we can easily see that 
\begin{align} \label{e:the0}
       \theta_0 \le C_{1}\sigma \sqrt{\frac{d\log\left(d\epsilon^{-1} \right)}{n}},
\end{align}
    and 
    \begin{align} \label{e:c0bound}
   C_{2}\left(\sqrt{d}\alpha_1+\alpha_{2}^{\beta-1}\right)=C_{2} \left[\sqrt{\frac{d\log\left(d\epsilon^{-1} \right)}{n}}+\left( \frac{\log(\epsilon^{-1})}{n}\right)^{\frac{\beta-1}{\beta}} \right],
    \end{align}
where $C_1$ and $C_2$ are constants not depending on $n,\epsilon, d$. 

By Lemma \ref{lem:probE1 re} and Lemma \ref{lem:probE2 re} and applying the generalized Poincar\'e–Miranda theorem \cite[Theorem 2.4]{Fr18} with the consideration of \eqref{e:the0} and \eqref{e:c0bound}, we immediately know that the conclusions in Theorem \ref{thm:jointly main regression} hold true with probability at least $1-6\epsilon$.
\end{proof}

\subsection{Proof of Lemma \ref{lem:probE1 re}}
To estimate $\mathbb{P}(\mathcal{E}_{1})$,  we define the following auxiliary events and will show in the later proof of Lemma \ref{lem:probE1 re} that $\mathcal{E}_{1,1} \cap \mathcal{E}_{1,2} \cap \mathcal{E}_{1,3}$ implies $\mathcal{E}_{1}$.
\begin{align} \label{def:E11E12E13}
    \begin{split}
    & \mathcal{E}_{1,1}:= \left\{\frac{1}{n}\sum_{i=1}^{n}\log\left( 1+\frac{2\alpha_{1}^{2}\varepsilon_{i}^{2}}{v_{0}^{2}} \right)\le 5\alpha_{1}^{2} \right\}; \ \ \ \ 
    \mathcal{E}_{1,2}:=\left\{  \frac{1}{n}\sum_{i=1}^{n}1_{\{|\varepsilon_{i}|\ge \frac{v_{0}}{\alpha_{1}}\}}\le  8\alpha_{1}^{2}\right\};\\
    & \mathcal{E}_{1,3}:= \left\{ \sup_{v\in [v_{0},V_{0}]} \left\|\frac{1}{n}\sum_{i=1}^{n} \boldsymbol{x}_{i}\psi_{1}\left(\frac{\alpha_{1}\varepsilon_{i}}{v} \right) \right\|_{\infty}\le   L\alpha_{1}^{2}\left[\frac{17|V_0-v_0|}{v_0}+3\right]\right\}.
    \end{split}
\end{align}
\begin{lem} \label{lem:probE11,E12,E13}
    Let $\epsilon\in (0,\frac{1}{6})$. Under the assumptions and settings of Theorem \ref{thm:jointly main regression}, it holds that
    $$\mathbb{P}\left(\mathcal{E}_{1,1}\cap\mathcal{E}_{1,2}\cap\mathcal{E}_{1,3}\right)\ge 1-4\epsilon.$$
\end{lem}

\begin{proof}
   we first show that $\mathcal{E}_{1,1}$ and $\mathcal{E}_{1,2}$ are events that hold with probability at least $1-\epsilon$. For $\mathcal{E}_{1,1}$, since $$\mathbb{E}\bigl[\prod_{i}(1+2\alpha_{1}^{2}\varepsilon_{i}^{2}/v_{0}^{2})\bigr]=(1+2\alpha_{1}^{2}\sigma^{2}/v_{0}^{2})^{n}\le e^{2n\alpha_{1}^{2}\sigma^{2}/v_{0}^{2}},$$ 
   Markov's inequality with $\sigma^{2}/v_{0}^{2}\le 2$ and $n\alpha_{1}^{2}=2\log(d\epsilon^{-1})$ implies $$\mathbb{P}(\mathcal{E}_{1,1}^{c})\le \epsilon.$$

    For $\mathcal{E}_{1,2}$, note that
    \begin{align*}
        \mathbb{E}[1_{\{|\varepsilon_{i}|\ge \frac{v_0}{\alpha_{1}}\}  }]\le \frac{\alpha_{1}^{2}\sigma^{2}}{v_0^{2}} \text{ and } \operatorname{Var}\left(1_{\{|\varepsilon_{i}|\ge \frac{v_0}{\alpha_{1}}\}  }\right)\le \mathbb{P}\left(|\varepsilon_{i}|\ge \frac{v_0}{\alpha_{1}}\right)\le \frac{\alpha_{1}^{2}\sigma^{2}}{v_0^{2}}.
    \end{align*}
    It follows from Bernstein’s inequality, $\frac{\sigma^{2}}{v_0^{2}}\le 2$ and the relation $n\alpha_{1}^{2}=2\log(d\epsilon^{-1})$ that
    \begin{align*}
        \mathbb{P}\left(\mathcal{E}_{1,2}^{c}\right)
        &\le \mathbb{P}\left(\sum_{i=1}^{n}1_{\{|\varepsilon_{i}|\ge \frac{v_0}{\alpha_{1}}\}  }-\mathbb{E}\left[1_{\{|\varepsilon_{i}|\ge \frac{v_0}{\alpha_{1}}\}  }\right]
        \ge \frac{3n\alpha_{1}^{2}\sigma^{2}}{v_0^{2}} \right) \le \epsilon. 
    \end{align*} 
    
    Next, as in the proof of Lemma \ref{lem:probE1}, we introduce two additional high-probability events that will be used to ensure $\mathcal{E}_{1,3}$ holds:
    \begin{align*}
        \mathcal{E}_{1,4}&:=\left\{ \frac{1}{n}\sum_{i=1}^{n} \frac{\alpha_{1}^{2}\varepsilon_{i}^{2}}{v_0^{2}}1_{\{\alpha_{1}|\varepsilon_i|\le v_0\} }\le 8\alpha_{1}^{2}  \right\};\\
        \mathcal{E}_{1,5}&:\left\{ \left\| \frac{1}{nL}\sum_{i=1}^{n}\boldsymbol{x}_i\psi_{1}\left(\frac{\alpha_{1}\varepsilon_i}{v_0}\right)\right\|_{\infty}\le   3\alpha_{1}^{2} \right\}.
    \end{align*}
    
    Let $g_i(v):=v\psi_{1}\left(\frac{\alpha_{1}\varepsilon_{i}}{v}\right).$
    Following the same argument as in the proof of \eqref{ineq:g'-2}, it follows that, for $1\le i\le n$,
    \begin{align*}
       \sup_{v\in [v_0,V_0]}\left|\frac{\partial}{\partial v}g_{i}(v)\right|\le M\left[\frac{\alpha_{1}^{2}\varepsilon_i^{2}}{2v_0^{2}}1_{\{\alpha_{1}|\varepsilon_i|\le v_0\} }+\frac{\sigma^{2}}{2v_0^{2}}1_{\{\alpha_{1}|\varepsilon_i|>v_0\} }+\log\left(1+\frac{\alpha_{1}^{2}\varepsilon_i^{2}}{v_0^{2}}\right)\right].
    \end{align*}
    Since $g_i$ are absolutely continuous by Assumption \ref{assumption:c}, we then have, for any $v_1,v_2\in [v_0,V_0]$,
    \begin{align}  \label{ineq:supxpsi}
    \begin{split}
        &\sup_{v_1,v_2\in [v_0,V_0]}\left \|\frac{1}{n}\sum_{i=1}^{n}\boldsymbol{x}_i\left[v_1\psi_{1}\left(\frac{\alpha_{1}\varepsilon_i}{v_1}\right)-v_2\psi_{1}\left(\frac{\alpha_{1} \varepsilon_i}{v_2}\right)\right]\right \|_{\infty}\\
        &\le \frac{1}{n}\sum_{i=1}^{n}\|\boldsymbol{x}_i\|_{\infty} \left|v_1\psi_{1}\left(\frac{\alpha_{1}\varepsilon_i}{v_1}\right)-v_2\psi_{1}\left(\frac{\alpha_{1} \varepsilon_i}{v_2}\right)\right|\\
        &\le \frac{ML|v_1-v_2|}{n}\sum_{i=1}^{n} \left[  \frac{\alpha_{1}^{2}\varepsilon_i^{2}}{2v_0^{2}}1_{\{\alpha_{1}|\varepsilon_i|\le v_0\} }+\frac{\sigma^{2}}{2v_0^{2}}1_{\{\alpha_{1}|\varepsilon_i|>v_0\} }+\log\left(1+\frac{\alpha_{1}^{2}\varepsilon_i^{2}}{v_0^{2}}\right) \right].
    \end{split}
    \end{align}  
    Note that
    \begin{align*}
        \mathbb{E}\left[\frac{\alpha_{1}^{2}\varepsilon_{i}^{2}}{v_0^{2}}1_{\{\alpha_{1}|\varepsilon_i|\le v_0\} }\right]\le \frac{\alpha_{1}^{2}\sigma^{2}}{v_0^{2}} \text{ and }
        \operatorname{Var}\left(\frac{\alpha_{1}^{2}\varepsilon_{i}^{2}}{v_0^{2}}1_{\{\alpha_{1}|\varepsilon_i|\le v_0\} }\right)\le \frac{\alpha_{1}^{2}\sigma^{2}}{v_0^{2}}.
    \end{align*}
    Again, by Bernstein's inequality and $\frac{\sigma^{2}}{v_0^{2}}\le 2$, we obtain
    \begin{align} \label{e:E14Large}
        \mathbb{P}\left( \mathcal{E}_{1,4}\right)\ge 1-\epsilon. 
    \end{align}
    Hence, under the events $\mathcal{E}_{1,1},\mathcal{E}_{1,2}$ and $\mathcal{E}_{1,4}$ , it follows from \eqref{ineq:supxpsi} and $n\alpha_{1}^{2}=2\log(d\epsilon^{-1})$ that
    \begin{align} \label{ineq:supxpsi-2}
         \begin{split}
        \sup_{v_1,v_2\in [v_0,V_0]}\left \|\frac{1}{n}\sum_{i=1}^{n}\boldsymbol{x}_i\left[v_1\psi_{1}\left(\frac{\alpha_{1}\varepsilon_i}{v_1}\right)-v_2\psi_{1}\left(\frac{\alpha_{1} \varepsilon_i}{v_2}\right)\right]\right \|_{\infty}\le 17ML\alpha_{1}^{2}|v_1-v_2|.
    \end{split}
    \end{align}
    On the other hand, for any $1\le k\le d$ and $1 \le i \le n$ , by \eqref{catoni c}, we have
     \begin{align*}
       \exp\left(\frac{x_{ik} }{L}\psi_{1}\left(\frac{\alpha_{1}\varepsilon_{i}}{v_{0}} \right) \right)
        &\le   \left(1+\frac{\alpha_{1}\varepsilon_{i}}{v_{0}}+\frac{\alpha_{1}^{2}\varepsilon_{i}^{2}}{2v_{0}^{2}} \right)^{\frac{|x_{ik}|}{L}} 
    \end{align*}
     where $x_{ik}$ is the $k$-th component of $\boldsymbol{x}_{i}$. This, together with the independence of $\varepsilon_i$, Jensen's inequality  and $\frac{|x_{ik}|}{L} \le 1$ implies that
        \begin{align*}
        \mathbb{E}\left[\exp\left(\sum_{i=1}^{n} \frac{x_{ik} }{L}\psi_{1}\left(\frac{\alpha_{1}\varepsilon_{i}}{v_0} \right) \right)\right] \le  \prod_{i=1}^{n} \left(1+\frac{\alpha_{1}^{2}\sigma^{2}}{2v_{0}^{2}} \right)^{\frac{|x_{ik}|}{L}} \le \exp\left[\frac{n\alpha_{1}^{2}\sigma^{2}}{2v_0^{2}}\right].
    \end{align*}
    It then follows from the Markov's inequality that
    \begin{align} \label{ineq:xpsiupper}
        \mathbb{P}\left(\sum_{i=1}^{n} \frac{x_{ik} }{L}\psi_{1}\left(\frac{\alpha_{1}\varepsilon_{i}}{v_0} \right)  \ge \frac{ n\alpha_{1}^{2}\sigma^{2}}{2v_0^{2}}   +\log\left(2d\epsilon^{-1}\right)\right)
        \le \frac{\epsilon}{2d}.
    \end{align}
 Similarly, for any $1\le k\le d$, 
    \begin{align} \label{ineq:xpsilower}
         \mathbb{P}\left(-\sum_{i=1}^{n} \frac{x_{ik} }{L}\psi_{1}\left(\frac{\alpha_{1}\varepsilon_{i}}{v_0} \right)  \ge   \frac{ n\alpha_{1}^{2}\sigma^{2}}{2v_0^{2}}   +\log\left(2d\epsilon^{-1}\right)   \right)
        \le \frac{\epsilon}{2d}.
    \end{align}
    Combining \eqref{ineq:xpsilower} and \eqref{ineq:xpsiupper} with $\frac{\sigma^{2}}{v_0^{2}}\le 2$ and $n\alpha_{1}^{2}=2\log(d\epsilon^{-1})$ gives us that
    \begin{align} \label{e:E15Large}
        \mathbb{P}\left( \mathcal{E}_{1,5} \right) \ge 1-\epsilon.
    \end{align}
    This, together with \eqref{ineq:supxpsi-2} implies that, 
    under the events $\mathcal{E}_{1,1},\mathcal{E}_{1,2},\mathcal{E}_{1,4}$ and $\mathcal{E}_{1,5}$,
    \begin{align*}
        \sup_{v\in [v_{0},V_{0}]} \left\|\frac{1}{n}\sum_{i=1}^{n} \boldsymbol{x}_{i}\psi_{1}\left(\frac{\alpha_{1}\varepsilon_{i}}{v} \right) \right\|_{\infty}
        &\le \sup_{v\in [v_0,V_0]}\left \|\frac{1}{nv_0}\sum_{i=1}^{n}\boldsymbol{x}_i\left[v\psi_{1}\left(\frac{\alpha_{1}\varepsilon_i}{v}\right)-v_0\psi_{1}\left(\frac{\alpha_{1} \varepsilon_i}{v_0}\right)\right]\right \|_{\infty}\\
        &\quad + \left\|\frac{1}{nv_0}\sum_{i=1}^{n} \boldsymbol{x}_{i}v_0\psi_{1}\left(\frac{\alpha_{1}\varepsilon_{i}}{v_0} \right) \right\|_{\infty}\\
        &\le L\alpha_{1}^{2}\left[\frac{17M|V_0-v_0|}{v_0}+3\right],
    \end{align*}
 which is exactly the event $\mathcal{E}_{1,3}$. This completes the proof. 
\end{proof}

\begin{proof}[Proof of Lemma \ref{lem:probE1 re}]
    Let $\mathcal{E}_{1,1},\mathcal{E}_{1,2}$ and $\mathcal{E}_{1,3}$ be defined as in \eqref{def:E11E12E13}. By Lemma \ref{lem:probE11,E12,E13}, it suffices to show that  there exists a constant $C=C(m_{2\beta},\sigma,\beta,c_{l}, c_{u}, L)$ such that as long as $n\ge Cd[\log(\epsilon^{-1})+1]$,
    \begin{equation} \label{e:E123ToE}
    {\rm the \ event} \ \mathcal{E}_{1,1}\cap \mathcal{E}_{1,2}\cap\mathcal{E}_{1,3} \ {\rm implies} \  \mathcal{E}_{1}.
    \end{equation}

By the monotonicity of $\psi_{1}$, $-x[\psi_{1}(x+y)-\psi_{1}(y)]\le 0$ for $x,y\in\mathbb{R}$. Denote
\[
r_{i}(\boldsymbol{\theta},v):=\langle\boldsymbol{x}_{i},\boldsymbol{\theta}-\boldsymbol{\theta}^{*}\rangle\,\psi_{1}\!\Bigl(\tfrac{\alpha_{1}(\varepsilon_{i}+\langle\boldsymbol{x}_{i},\boldsymbol{\theta}^{*}-\boldsymbol{\theta}\rangle)}{v}\Bigr), \quad i=1,...,n.\] 
Clearly, $\<\tilde{f}_{1}(\boldsymbol{\theta},v), \boldsymbol{\theta}-\boldsymbol{\theta}^{*}\>=\frac{1}{n}\sum_{i=1}^{n}r_{i}(\boldsymbol{\theta},v).$
We may decompose $r_{i}(\boldsymbol{\theta},v)\le r_{i}^{(1)}(\boldsymbol{\theta},v)+r_{i}^{(2)}(\boldsymbol{\theta},v)$
with
$$
r_{i}^{(1)}(\boldsymbol{\theta},v)=\langle\boldsymbol{x}_{i},\boldsymbol{\theta}-\boldsymbol{\theta}^{*}\rangle\!\Bigl[\psi_{1}\!\bigl(\tfrac{\alpha_{1}(\varepsilon_{i}+\langle\boldsymbol{x}_{i},\boldsymbol{\theta}^{*}-\boldsymbol{\theta}\rangle)}{v}\bigr)-\psi_{1}\!\bigl(\tfrac{\alpha_{1}\varepsilon_{i}}{v}\bigr)\Bigr]\mathbf{1}_{\{|\varepsilon_{i}|\le v/\alpha_{1}\}},$$
$$r_{i}^{(2)}(\boldsymbol{\theta},v)=\langle\boldsymbol{x}_{i},\boldsymbol{\theta}-\boldsymbol{\theta}^{*}\rangle\,\psi_{1}\left(\frac{\alpha_{1}\varepsilon_{i}}v\right).$$

\emph{Bounding $r_{i}^{(1)}(\boldsymbol{\theta},v)$}.
 Applying \eqref{e:XPsi1Y} to $\<\boldsymbol{x}_{i}, \boldsymbol{\theta}-\boldsymbol{\theta}^{*} \>\psi_{1}\left(\frac{\alpha_{1}\left(\varepsilon_{i}+\<\boldsymbol{x}_{i}, \boldsymbol{\theta}^{*}-\boldsymbol{\theta}\> \right)}{v}\right)$ and $-\<\boldsymbol{x}_{i},\boldsymbol{\theta}-\boldsymbol{\theta}^{*}\> \psi_{1}\left( \frac{\alpha_{1}\varepsilon_{i}}{v} \right)$, we have
 
\begin{equation} \label{e:R1i}
 r^{(1)}_{i}(\boldsymbol{\theta},v) \le |\<\boldsymbol{x}_{i}, \boldsymbol{\theta}-\boldsymbol{\theta}^{*} \>| \Psi_i 1_{\{|\varepsilon_i| \le \frac{v}{\alpha_1}\}}
\end{equation}
with $\psi_{+}(x):=\log(1+x+\frac{x^{2}}{2})$ and
$$\Psi_i=\psi_{+}\left(\operatorname{sign}\left(\<\boldsymbol{x}_{i}, \boldsymbol{\theta}-\boldsymbol{\theta}^{*} \> \right)  \frac{\alpha_{1}\left(\varepsilon_{i}+\<\boldsymbol{x}_{i}, \boldsymbol{\theta}^{*}-\boldsymbol{\theta}\> \right)}{v}\right)+\psi_{+}\left( -\operatorname{sign}\left(\<\boldsymbol{x}_{i}, \boldsymbol{\theta}-\boldsymbol{\theta}^{*} \> \right)\frac{\alpha_{1}\varepsilon_{i}}{v} \right).$$
 Moreover, whenever $|\varepsilon_{i}|\le \frac{v}{\alpha_{1}}$, we have 
    \begin{align*} 
        \begin{split}
        \Psi_i
        &\le \log \bigg{(} 1-\frac{\alpha_{1}|\<\boldsymbol{x}_{i}, \boldsymbol{\theta}-\boldsymbol{\theta}^{*} \>|}{v} +\frac{\alpha_{1}^{2}|\<\boldsymbol{x}_{i}, \boldsymbol{\theta}-\boldsymbol{\theta}^{*} \>|^{2}}{2v^{2}}+\frac{\alpha_{1}^{2} \left(\varepsilon_{i}+\<\boldsymbol{x}_{i}, \boldsymbol{\theta}^{*}-\boldsymbol{\theta}\> \right)^{2}}{4v^{2}}
        \\
        &\qquad \qquad +\frac{\alpha_{1}^{2}}{2v^{2}}\bigg{[}|\varepsilon_{i}||\<\boldsymbol{x}_{i}, \boldsymbol{\theta}-\boldsymbol{\theta}^{*} \>|+|\<\boldsymbol{x}_{i}, \boldsymbol{\theta}-\boldsymbol{\theta}^{*} \>|^{2} \bigg{]}\bigg{)}\\
        &\le \log \bigg{(} 1-\frac{\alpha_{1}|\<\boldsymbol{x}_{i}, \boldsymbol{\theta}-\boldsymbol{\theta}^{*} \>|}{v} +\frac{2\alpha_{1}^{2}|\<\boldsymbol{x}_{i}, \boldsymbol{\theta}-\boldsymbol{\theta}^{*} \>|^{2}}{v^{2}}+\frac{2\alpha_{1}^{2}\varepsilon_{i}^{2} }{v^{2}} \bigg{)}.
        \end{split}
    \end{align*}
    where the last inequality is by $|\varepsilon_{i}||\<\boldsymbol{x}_{i}, \boldsymbol{\theta}-\boldsymbol{\theta}^{*} \>| \le \frac12 |\varepsilon_{i}|^2+\frac 12 |\<\boldsymbol{x}_{i}, \boldsymbol{\theta}-\boldsymbol{\theta}^{*} \>|^2.$
    Using the inequality \eqref{ineq:1+x+y} 
    with $x=\frac{2\alpha_{1}^{2}\varepsilon_{i}^{2} }{v^{2}}$ and
        $y=-\frac{\alpha_{1}|\<\boldsymbol{x}_{i}, \boldsymbol{\theta}-\boldsymbol{\theta}^{*} \>|}{v} +\frac{2\alpha_{1}^{2}|\<\boldsymbol{x}_{i}, \boldsymbol{\theta}-\boldsymbol{\theta}^{*} \>|^{2}}{v^{2}}$ therein, 
    yields that as long as $|\varepsilon_{i}|\le \frac{v}{\alpha_{1}}$,
    \begin{align} \label{ineq:r(1)upper-3}
        \begin{split}
       \Psi_i \le  -\frac{\alpha_{1}|\<\boldsymbol{x}_{i}, \boldsymbol{\theta}-\boldsymbol{\theta}^{*} \>|}{3v} +\frac{2\alpha_{1}^{2}|\<\boldsymbol{x}_{i}, \boldsymbol{\theta}-\boldsymbol{\theta}^{*} \>|^{2}}{v^{2}} +\log \bigg{(} 1+\frac{2\alpha_{1}^{2}\varepsilon_{i}^{2} }{v^{2}} \bigg{)}.
        \end{split}
    \end{align}
    Combining \eqref{ineq:r(1)upper-3} with \eqref{e:R1i} gives that, for any $1\le i\le n$,
    \begin{align*} 
        \begin{split}
        r^{(1)}_{i}(\boldsymbol{\theta,v})
        \le   &\bigg{[} -\frac{\alpha_{1}|\<\boldsymbol{x}_{i}, \boldsymbol{\theta}-\boldsymbol{\theta}^{*} \>|}{3v} +\frac{2\alpha_{1}^{2}|\<\boldsymbol{x}_{i}, \boldsymbol{\theta}-\boldsymbol{\theta}^{*} \>|^{2}}{v^{2}}+\log \bigg{(} 1+\frac{2\alpha_{1}^{2}\varepsilon_{i}^{2} }{v^{2}} \bigg{)}\bigg{]} |\<\boldsymbol{x}_{i}, \boldsymbol{\theta}-\boldsymbol{\theta}^{*} \>|1_{\{|\varepsilon_{i}|\le \frac{v}{\alpha_{1}}\}}.
        \end{split}
    \end{align*} 
     Further using $L=\max_{1\le i\le n} \|\boldsymbol{x}_{i}\|_{2}$ and $|\<\boldsymbol{x}_{i}, \boldsymbol{\theta}-\boldsymbol{\theta}^{*} \>|\le L\|\boldsymbol{\theta}-\boldsymbol{\theta}^{*}\|_{2}$ yields that
       \begin{align*} 
        \begin{split}
            r^{(1)}_{i}(\boldsymbol{\theta,v})
        &\le -\frac{\alpha_{1}|\<\boldsymbol{x}_{i}, \boldsymbol{\theta}-\boldsymbol{\theta}^{*} \>|^{2}}{3v} +\frac{L^{2}\alpha_{1} \|\boldsymbol{\theta}-\boldsymbol{\theta}^{*}\|_{2}^{2}}{3v}1_{\{|\varepsilon_{i}|\ge \frac{v}{\alpha_{1}}\}}\\
        &\quad +L\|\boldsymbol{\theta}-\boldsymbol{\theta}^{*}\|_{2}\left(\frac{2\alpha_{1}^{2}|\<\boldsymbol{x}_{i}, \boldsymbol{\theta}-\boldsymbol{\theta}^{*} \>|^{2}}{v^{2}}+\log \bigg{(} 1+\frac{2\alpha_{1}^{2}\varepsilon_{i}^{2} }{v^{2}} \bigg{)} \right)
        ,
        \end{split}
    \end{align*}
     Note that Assumption \ref{A1} implies that
    \begin{align*}
        nc_{l}\|\boldsymbol{\theta}-\boldsymbol{\theta}^{*}\|_{2}^{2}\le \sum_{i=1}^{n}\<\boldsymbol{x}_{i},\boldsymbol{\theta}^{*}-\boldsymbol{\theta}\>^{2}\le nc_{u}\|\boldsymbol{\theta}-\boldsymbol{\theta}^{*}\|_{2}^{2}.
    \end{align*}
    So summing up all the $r^{(1)}_{i}(\boldsymbol{\theta},v)$ over $i$ gives us
    \begin{align*}
        \frac{1}{n}\sum_{i=1}^{n} r^{(1)}_{i}(\boldsymbol{\theta},v)
        &\le -\frac{\alpha_{1}c_{l}\|\boldsymbol{\theta}-\boldsymbol{\theta}^{*}\|_{2}^{2}}{3v} 
        +\frac{L^{2}\alpha_{1} \|\boldsymbol{\theta}-\boldsymbol{\theta}^{*}\|_{2}^{2}}{3nv}\sum_{i=1}^{n}1_{\{|\varepsilon_{i}|\ge \frac{v}{\alpha_{1}}\}}\\
        &\quad 
        + \frac{L\|\boldsymbol{\theta}-\boldsymbol{\theta}^{*}\|_{2}}{n} \sum_{i=1}^{n}\log \bigg{(} 1+\frac{2\alpha_{1}^{2}\varepsilon_{i}^{2} }{v^{2}} \bigg{)}+\frac{2\alpha_{1}^{2}c_{u}L\|\boldsymbol{\theta}-\boldsymbol{\theta}^{*}\|_{2}^{3}}{v^{2}}
        .
    \end{align*}
    Since $\|\boldsymbol{\theta}-\boldsymbol{\theta}^{*}\|_{2}=\theta_{0}$ for any $\boldsymbol{\theta}\in \partial\mathcal{B}_{\theta_{0}}(\boldsymbol{\theta}^{*})$, taking the supremum with respect to $\boldsymbol{\theta}$ and $v$, we obtain
    \begin{align*}
        &\sup_{v\in [v_{0},V_{0}]} \sup_{\boldsymbol{\theta}\in \partial \mathcal{B}_{\theta_{0}}(\boldsymbol{\theta}^{*})}\left\{\frac{1}{n}\sum_{i=1}^{n} r^{(1)}_{i}(\boldsymbol{\theta},v)\right\}\\
        &\le L\theta_{0}\left[-\frac{\alpha_{1}}{3L}\left(\frac{c_{l}}{V_{0}}-\frac{L^{2} }{nv_{0}}\sum_{i=1}^{n}1_{\{|\varepsilon_{i}|\ge \frac{v_{0}}{\alpha_{1}}\}}\right)\theta_{0} 
        + \frac{1}{n} \sum_{i=1}^{n}\log \bigg{(} 1+\frac{2\alpha_{1}^{2}\varepsilon_{i}^{2} }{v_{0}^{2}} \bigg{)}
        +\frac{2\alpha_{1}^{2}c_{u}}{v_{0}^{2}}\theta_{0}^{2}\right].
    \end{align*} 
    Moreover, it follows from the occurrence of $\mathcal{E}_{1,1}\cap\mathcal{E}_{1,2}\cap\mathcal{E}_{1,3}$ and the definitions of these events in \eqref{def:E11E12E13} that
    \begin{align} \label{ineq:sumr(1)i}
    \begin{split}
        \sup_{v\in [v_{0},V_{0}]} \sup_{\boldsymbol{\theta}\in \partial \mathcal{B}_{\theta_{0}}(\boldsymbol{\theta}^{*})}\left\{\frac{1}{n}\sum_{i=1}^{n} r^{(1)}_{i}(\boldsymbol{\theta},v)\right\}\le L\theta_{0}\alpha_{1}\left[-\frac{1}{3L}\left(\frac{c_{l}}{V_{0}}-\frac{8L^{2}\alpha_{1}^{2} }{v_{0}}\right)\theta_{0} +\frac{2\alpha_{1}c_{u}}{v_{0}^{2}}\theta_{0}^{2}+ 5\alpha_{1} 
        \right],
    \end{split}
\end{align}
\par

\emph{Bounding $r_i^{(2)}(\boldsymbol{\theta},v)$}. By $\|\boldsymbol{\theta}-\boldsymbol{\theta}^{*}\|_{1}\le \sqrt{d}\|\boldsymbol{\theta}-\boldsymbol{\theta}^{*}\|_{2}$, we have 
    \begin{align*}
        \frac{1}{n}\sum_{i=1}^{n}r^{(2)}_{i}(\boldsymbol{\theta},v)
        \le \left\|\frac{1}{n}\sum_{i=1}^{n}\boldsymbol{x}_{i}\psi_{1}\left( \frac{\alpha_{1}\varepsilon_{i}}{v} \right) \right\|_{\infty} \|\boldsymbol{\theta}-\boldsymbol{\theta}^{*}\|_{1} \le \sqrt{d}\|\boldsymbol{\theta}-\boldsymbol{\theta}^{*}\|_{2}\left\|\frac{1}{n}\sum_{i=1}^{n}\boldsymbol{x}_{i}\psi_{1}\left( \frac{\alpha_{1}\varepsilon_{i}}{v} \right) \right\|_{\infty}.
    \end{align*}
     Since $\|\boldsymbol{\theta}-\boldsymbol{\theta}^{*}\|_{2}=\theta_{0}$ for any $\boldsymbol{\theta}\in \partial\mathcal{B}_{\theta_{0}}(\boldsymbol{\theta}^{*})$, taking the supremum with respect to $v$ and $\boldsymbol{\theta}$ gives 
    \begin{align*}
        \sup_{v\in [v_{0},V_{0}]} \sup_{\boldsymbol{\theta}\in \partial \mathcal{B}_{\theta_{0}}(\boldsymbol{\theta}^{*})}\left\{\frac{1}{n}\sum_{i=1}^{n}r^{(2)}_{i}(\boldsymbol{\theta},v)\right\}
        &\le \sqrt{d}\theta_{0} \sup_{v\in [v_{0},V_{0}]}\left\|\frac{1}{n}\sum_{i=1}^{n}\boldsymbol{x}_{i}\psi_{1}\left( \frac{\alpha_{1}\varepsilon_{i}}{v} \right) \right\|_{\infty}.
    \end{align*}
    Further more, by the occurrence of $\mathcal{E}_{1,1}\cap\mathcal{E}_{1,2}\cap\mathcal{E}_{1,3}$ and the definition of $\mathcal{E}_{1,3}$ in \eqref{def:E11E12E13},
    \begin{align} \label{ineq:sumr(2)i}
    \sup_{v\in [v_{0},V_{0}]} \sup_{\boldsymbol{\theta}\in \partial \mathcal{B}_{\theta_{0}}(\boldsymbol{\theta}^{*})}\left\{\frac{1}{n}\sum_{i=1}^{n}r^{(2)}_{i}(\boldsymbol{\theta},v)\right\}
    \le \sqrt{d}L\theta_{0}\alpha_{1}^{2}\left( \frac{17M|V_0-v_0|}{v_0}+3\right).
\end{align}

Combining \eqref{ineq:sumr(1)i} and \eqref{ineq:sumr(2)i}, we have
\begin{align*}
    &\sup_{v\in [v_{0},V_{0}]} \sup_{\boldsymbol{\theta}\in \partial \mathcal{B}_{\theta_{0}}(\boldsymbol{\theta}^{*})} \left[\frac1n\sum_{i}r_{i}(\boldsymbol{\theta},v)\right]\\
    &\le L\theta_{0}\alpha_{1}\bigg{[}-\frac{1}{3L}\left(\frac{c_{l}}{V_{0}}-\frac{8L^{2}\alpha_{1}^{2} }{v_{0}}\right)\theta_{0} +\frac{2\alpha_{1}c_{u}}{v_{0}^{2}}\theta_{0}^{2}+ 5\alpha_{1}+\sqrt{d}\alpha_{1}\left(\frac{17M|V_0-v_0|}{v_0}+3\right)\bigg{]}.
\end{align*}
Recall that $\alpha_{1}=O\left(\sqrt{\frac{\log(d\epsilon^{-1})}{n}}\right)$, $\alpha_{2}=O\left(\left(\frac{\log(\epsilon^{-1})}{n}\right)^{\frac{1}{\beta}}\right)$ and $V_{0}-v_0=O(\sigma \alpha_{2}^{\beta-1})$. There must exist a $C=C(m_{2\beta},\sigma,\beta,c_{l},  c_{u}, L,M)>0$ such that 
\begin{align} \label{ineq:suptf1}
    &\sup_{v\in [v_{0},V_{0}]} \sup_{\boldsymbol{\theta}\in \partial \mathcal{B}_{\theta_{0}}(\boldsymbol{\theta}^{*})} \left[\frac1n\sum_{i}r_{i}(\boldsymbol{\theta},v)\right]
    \le L\theta_{0}\alpha_{1}\bigg{[}-\frac{c_l}{6V_0L}\theta_{0}+9\sqrt{d}\alpha_{1}\bigg{]}=0,
\end{align}
provided $n\ge Cd[\log(\epsilon^{-1})+1]$. So the proof is complete.
\end{proof}

\subsection{Proof of Lemma \ref{lem:probE2 re}}

\begin{proof}[Proof of Lemma \ref{lem:probE2 re}]
We shall show that for any $\epsilon \in (0,\frac 16)$, there exist some $C=C(m_{2\beta},\sigma,\beta,c_{l}, c_{u}, L)$ such that as $n\ge Cd[\log(\epsilon^{-1})+1]$ the following two claims hold:

\textbf{Claim 1 (concentration)}: We have
\begin{equation} \label{ineq:tf2upper re}
        \mathbb{P}\left(\sup_{\boldsymbol{\theta}\in \mathcal{B}_{\theta_0}(\boldsymbol{\theta}^*)}\frac{\tilde{f}_{2}(\boldsymbol{\theta},v)}{\alpha_{2}}
        \le \tilde B^{v}_{+}(v)\right) \ge 1-\epsilon,  \ \ \ \ \ \ \forall v>0; 
    \end{equation}
where
    \begin{align} \label{def:tBv+}
    \begin{split}
   & \tilde{B}^{v}_{+}(v):=\frac{\left(1+\theta_{0} \right)\sigma^{2}}{v^{2}} -1+\frac{(1+\frac{1}{\theta_{0}}) c_{u}\theta_{0}^{2}}{(1-\alpha_{2})v^{2}}\\
     &\quad \ \  +\frac{2^{4\beta-3}\alpha_{2}^{\beta-1}}{\beta}\left( \frac{m_{2\beta}+\sigma^{2\beta}}{v^{2\beta}}+\frac{ c_{u}L^{2\beta-2} }\theta_{0}^{2\beta}{(1-\alpha_{2})v^{2\beta}}+\left|\frac{\sigma^{2}}{v^{2}}-1 \right|^{\beta} \right)+\frac{\log(\epsilon^{-1})}{n\alpha_{2}};
    \end{split}
    \end{align}
and
    \begin{equation} \label{ineq:tf2lower re}
    \mathbb{P}\left(\inf_{\boldsymbol{\theta}\in \mathcal{B}_{\theta_0}(\boldsymbol{\theta}^*)}\frac{\tilde{f}_{2}(\boldsymbol{\theta},v)}{\alpha_{2}}
        \ge \tilde B^{v}_{-}(v)\right) \ge 1-\epsilon, \ \ \ {\forall v>0,}
    \end{equation}
    where
    \begin{align} \label{def:tBv-}
    \begin{split}
    \tilde{B}^{v}_{-}(v) &:=\frac{\left(1-\theta_{0} \right)\sigma^{2}}{v^{2}}-1 -\frac{(\frac{1}{\theta_{0}}-1)\beta c_{u}\theta_{0}^{2}}{v^{2}}\\
     &\quad -\frac{2^{4\beta-3}\alpha_{2}^{\beta-1}}{\beta}\left( \frac{m_{2\beta}+\sigma^{2\beta}}{v^{2\beta}}+\frac{ \beta c_{u}L^{2\beta-2}}\theta_{0}^{2\beta}{v^{2\beta}}+\left|\frac{\sigma^{2}}{v^{2}}-1 \right|^{\beta} \right)-\frac{\log(\epsilon^{-1})}{n\alpha_{2}}. 
    \end{split}
    \end{align}

\textbf{Claim 2 (existence of $v_{\pm}$)}: There exist some $0<v_+<V_0$ and some $v_{-}>v_0$ such that 
\begin{equation} \label{e:TilBv+=0}
\tilde{B}^{v}_{+}(v_+) \le 0,
\end{equation}
\begin{equation} \label{e:TilBv-=0}
\tilde{B}^{v}_{-}(v_{-}) \ge 0.
\end{equation}
 Assume that the above two claims hold true.
 Let us now use \eqref{ineq:tf2upper re} and \eqref{e:TilBv+=0} to prove the first inequality in the lemma.  Since for any $\boldsymbol{\theta}$, $\tilde{f}_{2}(\boldsymbol{\theta},v)$ is monotonically decreasing with respect to $v$, by \eqref{ineq:tf2upper re}, it holds with probability at least $1-\epsilon$ that,
   \begin{align*}
       \sup_{\boldsymbol{\theta}\in \mathcal{B}_{\theta_{0}}(\boldsymbol{\theta}^{*})}\tilde{f}_{2}(\boldsymbol{\theta},V_{0}) \le \sup_{\boldsymbol{\theta}\in \mathcal{B}_{\theta_{0}}(\boldsymbol{\theta}^{*})}\tilde{f}_{2}(\boldsymbol{\theta},v_{+})\le \alpha_{2} \tilde{B}^{v}_{+}(v_{+}) \le 0.
   \end{align*}
For the second inequality in the lemma, we can use \eqref{ineq:tf2lower re} and \eqref{e:TilBv-=0} to prove it in the same way. It remains to prove the two claims.

   \underline{\textbf{Proof of Claim 1}} It follows from the definition of $\tilde{f}_{2}$ in \eqref{f1f2regression} and \eqref{catoni c2} that
    \begin{align} \label{ineq:tf2upper}
    \begin{split}
        &\mathbb{E}\left[\sup_{\boldsymbol{\theta}\in \mathcal{B}_{\theta_0}\left(\boldsymbol{\theta}^*\right)} \left\{\exp\left( n \tilde{f}_{2}(\boldsymbol{\theta},v)  \right) \right\} \right]
        \le \mathbb{E}\left[ \sup_{\boldsymbol{\theta}\in \mathcal{B}_{\theta_0}\left(\boldsymbol{\theta}^*\right)}\left\{\prod_{i=1}^{n}I_{i}\right\}\right] ,\quad \forall v>0,
    \end{split}
    \end{align}
    with
    \begin{align} \label{def:Ii}
        I_{i}:=1+\alpha_{2} \left(\frac{\left(\varepsilon_{i}+\< \boldsymbol{x}_{i},\boldsymbol{\theta}^{*}-\boldsymbol{\theta}\>\right)^{2}}{v^{2}}-1\right)+\frac{\alpha_{2}^{\beta}}{\beta}\left| \frac{\left(\varepsilon_{i}+\<\boldsymbol{x}_{i},\boldsymbol{\theta}^{*}-\boldsymbol{\theta}\> \right)^{2}}{v^{2}}-1\right|^{\beta} .
    \end{align}
    By the basic inequality $2xy\le x^{2}+y^{2}$, 
    \begin{align} \label{ineq:tf2-1}
        \left(\varepsilon_{i}+\< \boldsymbol{x}_{i},\boldsymbol{\theta}^{*}-\boldsymbol{\theta}\> \right)^{2}\le \left(1+\theta_{0} \right)\varepsilon_{i}^{2}+\left(1+\frac{1}{\theta_{0}} \right)\< \boldsymbol{x}_{i},\boldsymbol{\theta}^{*}-\boldsymbol{\theta}\>^{2},
    \end{align}
    and by the convexity inequality \eqref{ineq:convexity} twice,
    \begin{align} \label{ineq:tf2-2}
        \begin{split}
        \left| \frac{\left(\varepsilon_{i}+\<\boldsymbol{x}_{i},\boldsymbol{\theta}^{*}-\boldsymbol{\theta}\> \right)^{2}}{v^{2}}-1\right|^{\beta} 
    \le 2^{4\beta-3}\left( \frac{\left|\varepsilon_{i}\right|^{2\beta}+\left|\<\boldsymbol{x}_{i},\boldsymbol{\theta}^{*}-\boldsymbol{\theta}\> \right|^{2\beta}+\sigma^{2\beta}}{v^{2\beta}}+\left|\frac{\sigma^{2}}{v^{2}}-1 \right|^{\beta} \right).
        \end{split}
    \end{align}
Hence, 
    \begin{align*}
        \begin{split}
         I_{i}
        &\le  1+I^{(1)}_{i}+I^{(2)}_{i}, \quad 
        \quad 1 \le i \le n,
        \end{split}
    \end{align*}
    where 
    \begin{align*}
        I^{(1)}_{i}:=\alpha_{2}\left(\frac{\left(1+\theta_{0} \right)\varepsilon_{i}^{2}}{v^{2}} -1\right) +\frac{2^{4\beta-3}\alpha_{2}^{\beta}}{\beta}\left( \frac{\left|\varepsilon_{i}\right|^{2\beta}+\sigma^{2\beta}}{v^{2\beta}}+\left|\frac{\sigma^{2}}{v^{2}}-1 \right|^{\beta} \right),
    \end{align*}
    \begin{align*}
        I^{(2)}_{i}:=\frac{\alpha_{2}\left(1+\frac{1}{\theta_{0}} \right)\< \boldsymbol{x}_{i},\boldsymbol{\theta}^{*}-\boldsymbol{\theta}\>^{2}}{v^{2}}+\frac{2^{4\beta-3}\alpha_{2}^{\beta}\left|\<\boldsymbol{x}_{i},\boldsymbol{\theta}^{*}-\boldsymbol{\theta}\> \right|^{2\beta}}{\beta v^{2\beta}}. 
    \end{align*}
Note that  $I^{(1)}_{i}$  is stochastic and $I^{(2)}_{i}$ is related to $\boldsymbol{\theta}$. To separate the random terms from the terms containing $\boldsymbol{\theta}$, using \eqref{ineq:1+x+y} and the fact that $I^{(1)}_{i}\ge -\alpha_{2}$ gives us
    \begin{align} \label{ineq:Ii}
        \begin{split}
        I_{i}\le \left(1+I^{(1)}_{i}+I^{(2)}_{i}\right)
        \le (1+I^{(1)}_{i})\exp\left(\frac{I^{(2)}_{i}}{1+I^{(1)}_{i}} \right)\le (1+I^{(1)}_{i})\exp\left(\frac{I^{(2)}_{i}}{1-\alpha_{2}} \right).
        \end{split}
    \end{align}
    Combining this with \eqref{ineq:tf2upper} and using the independence of $\varepsilon_{i}$ yields
    \begin{align*} 
    \begin{split}
        \mathbb{E}\left[\sup_{\boldsymbol{\theta}\in \mathcal{B}_{\theta_0}\left(\boldsymbol{\theta}^*\right)} \left\{\exp\left( n \tilde{f}_{2}(\boldsymbol{\theta},v)  \right) \right\} \right]
        &\le \left\{\prod_{i=1}^{n}\mathbb{E} \left[  (1+I^{(1)}_{i})\right]\right\} 
         \exp\left(\frac{\sup_{\boldsymbol{\theta}\in \mathcal{B}_{\theta_0}\left(\boldsymbol{\theta}^*\right)} \left\{\sum_{i=1}^{n}I^{(2)}_{i}\right\}}{1-\alpha_{2}} \right).
    \end{split}
    \end{align*}
    On the one hand, we have  
    $ 
        \mathbb{E}\left[ I^{(1)}_{i} \right]=\alpha_{2}\left(\frac{\left(1+\theta_{0} \right)\sigma^{2}}{v^{2}} -1\right) +\frac{2^{4\beta-3}\alpha_{2}^{\beta}}{\beta}\left( \frac{m_{2\beta}+\sigma^{2\beta}}{v^{2\beta}}+\left|\frac{\sigma^{2}}{v^{2}}-1 \right|^{\beta} \right)
    $ and thus
\begin{align*} 
        \begin{split}
        \prod_{i=1}^{n}\mathbb{E} \left[  (1+I^{(1)}_{i})\right] 
      \le  \exp\left[n\alpha_{2}\left(\frac{\left(1+\theta_{0} \right)\sigma^{2}}{v^{2}} -1\right) +\frac{2^{4\beta-3}\alpha_{2}^{\beta}n}{\beta}\left( \frac{m_{2\beta}+\sigma^{2\beta}}{v^{2\beta}}+\left|\frac{\sigma^{2}}{v^{2}}-1 \right|^{\beta} \right)\right]
        \end{split}
\end{align*}
    On the other hand, for $I^{(2)}_{i}$, 
     it follows from Assumption \ref{A1}  and 
     $\left|\< \boldsymbol{x}_{i},\boldsymbol{\theta}^{*}-\boldsymbol{\theta}\>\right|\le L\|\boldsymbol{\theta}^{*}-\boldsymbol{\theta}\|_{2}$ 
     that
 $ \sum_{i=1}^{n} \< \boldsymbol{x}_{i},\boldsymbol{\theta}^{*}-\boldsymbol{\theta}\>^{2}\le nc_{u}\|\boldsymbol{\theta}^{*}-\boldsymbol{\theta}\|_{2}^{2}$ and $\sum_{i=1}^{n} \< \boldsymbol{x}_{i},\boldsymbol{\theta}^{*}-\boldsymbol{\theta}\>^{2\beta}\le  nc_{u}L^{2\beta-2}\|\boldsymbol{\theta}^{*}-\boldsymbol{\theta}\|_{2}^{2\beta}$, hence
    \begin{align*} 
        \begin{split}
        &\sup_{_{\boldsymbol{\theta}\in \mathcal{B}_{\theta_0}\left(\boldsymbol{\theta}^*\right)}}\left\{ \sum_{i=1}^{n} I^{(2)}_{i}\right\}\le \frac{\alpha_{2}\left(1+\frac{1}{\theta_{0}} \right)nc_{u}\theta_{0}^{2}}{v^{2}}+\frac{2^{4\beta-3}\alpha_{2}^{\beta} nc_{u} L^{2\beta-2}\theta_{0}^{2\beta}}{\beta v^{2\beta}} .
        \end{split}
    \end{align*} 
Therefore,
    \begin{align*}
        &\mathbb{E}\left[\sup_{\boldsymbol{\theta}\in \mathcal{B}_{\theta_0}\left(\boldsymbol{\theta}^*\right)} \left\{\exp\left( n \tilde{f}_{2}(\boldsymbol{\theta},v)  \right) \right\} \right] \le  \exp\left(n\alpha_{2}\tilde B^{v}_{+}(v)-\log(\epsilon^{-1})  \right),
    \end{align*}
    where $\tilde B^{v}_{+}(v)$ is defined as \eqref{def:tBv+}. 
    Using the Markov's inequality as before yields \eqref{ineq:tf2upper re}. 
        
    Repeating the same argument for $-f_{2}$ in which \eqref{ineq:tf2-1} is replaced with 
    \begin{align*}
        -\left(\varepsilon_{i}+\< \boldsymbol{x}_{i},\boldsymbol{\theta}^{*}-\boldsymbol{\theta}\> \right)^{2}\le -\left(1-\theta_{0} \right)\varepsilon_{i}^{2}+\left(\frac{1}{\theta_{0}} -1\right)\< \boldsymbol{x}_{i},\boldsymbol{\theta}^{*}-\boldsymbol{\theta}\>^{2},
    \end{align*}
    one can also show that \eqref{ineq:tf2lower re} holds true with $\tilde{B}^{v}_{-}(v)$ defined in \eqref{def:tBv-}. Hence, the proof of the Claim 1 is complete.

    \underline{\textbf{Proof of Claim 2}} The analysis for $\tilde{B}^{v}_{+}(v)$ and $\tilde{B}^{v}_{-}(v)$ is very similar to $B^v_{+}$ and $B^v_{-}$ in Section \ref{Sec:AppB}. Therefore, we will only present the main steps here and omit the tedious details. 
    
    By changing the variable with $t=\frac{\sigma^{2}}{v^{2}}-1$, we can obtain
    \begin{align} \label{e:TilBvBtCom}
    \tilde{B}^{v}_{+}(v) &\le a|t|^{\beta}+bt+c:=\bar{B}^{v}_{+}(t),
    \end{align}
    for some positive  
    \begin{align*}
        a=O\left( \alpha_{2}^{\beta-1}\right),\quad b=1+O(\alpha_{1}),
    \end{align*}
    and
    \begin{align*}
        c&=\theta_{0}+\frac{(1+\frac{1}{\theta_{0}})c_{u}\theta_{0}^{2}}{(1-\alpha_{2})\sigma^{2}}+\frac{2^{5\beta-4}\alpha_{2}^{\beta-1}}{\beta}\left(\frac{m_{2\beta}}{\sigma^{2\beta}}+1+\frac{ c_{u}L^{2\beta-2}}\theta_{0}^{2\beta}{(1-\alpha_{2})\sigma^{2\beta}} \right)+\frac{\log(\epsilon^{-1})}{n\alpha_{2}}\\
        &=O(\sqrt{d}\alpha_{1}+\alpha_{2}^{\beta-1}).
    \end{align*}
     Recalling
     $ \alpha_{1}=\sqrt{\frac{2\log\left(d \epsilon^{-1}\right)}{n}}, \alpha_{2}=\left( \frac{\log(\epsilon^{-1})}{n}\right)^{\frac{1}{\beta}},$
  it is clear that we can find some positive constant  $C=C(m_{2\beta},\sigma,\beta,c_{l}, c_{u}, L)$ such that
   \begin{align*}
       b-a( 2c)^{\beta-1}>\frac{1}{2},
   \end{align*}
   provided $n\ge Cd[\log(\epsilon^{-1})+1]$.
   In this case, Lemma \eqref{lem:root bound}(i) implies that the equation $\bar{B}^{v}_{+}(t)=0$ has two roots. Denote the largest one by $t_{+}$ and $v_{+}:=\frac{\sigma}{\sqrt{t_{+}+1}}$. Lemma \eqref{lem:root bound}(i) also tells us that
   \begin{align*}
       t_{+}\ge -\frac{c}{b-a( 2c})^{\beta-1} \ge -2c \ge -c_{0} \left(\sqrt{d}\alpha_1+\alpha_{2}^{\beta-1}\right),
   \end{align*}
   whenever $n\ge Cd[\log(\epsilon^{-1})+1]$ for some  (possibly larger) positive constant $C=C(m_{2\beta},\sigma,\beta,c_{l}, c_{u}, L)$, which implies 
   \begin{align*}
       v_{+}\le \left(1-c_{0} \left(\sqrt{d}\alpha_1+\alpha_{2}^{\beta-1}\right)\right)^{-\frac{1}{2}}\sigma =V_{0}.
   \end{align*}
By \eqref{e:TilBvBtCom}, we have
   \begin{align*}
      \tilde{B}^{v}_{+}(v_{+}) \le  \bar{B}_+^{v}(t_{+})=0,
   \end{align*}
 which is exactly \eqref{e:TilBv+=0}. The \eqref{e:TilBv-=0} can be proved in the same way, so the proof of Claim 2 is finished.
\end{proof}

\section{Catoni type joint robust \texorpdfstring{$\ell_2$}{l2}-penalized regression} \label{Sec:AppD}
The argument parallels Section \ref{Sec:AppC}; we highlight only the parameter changes and the new boundary computation for the penalty term.

\subsection{The strategy of the proof of Theorem \ref{Thm:jointly ridge}}\label{ss:appD strategy}

With $\alpha_{1},\alpha_{2}$ as in \eqref{def:alpha1,2 re}, set
\begin{equation}\label{def:vVtheta ridge}
v_{0}:=\!\Bigl(1+c_{0}\bigl(\alpha_{2}^{\beta-1}+\tfrac{\theta_{0}}{\theta_{0}+1/2}\bigr)\Bigr)^{-1/2}\!\sigma,\quad
V_{0}:=\!\Bigl(1-c_{0}\bigl(\alpha_{2}^{\beta-1}+\tfrac{\theta_{0}}{\theta_{0}+1/2}\bigr)\Bigr)^{-1/2}\!\sigma,\quad
\theta_{0}:=\tfrac{9\sqrt{d}L\alpha_{1}+\lambda_{0}\|\boldsymbol{\theta}^{*}\|_{2}}{\lambda_{0}+c_{l}/(6V_{0})},
\end{equation}
with $c_{0}:=1+\frac{2c_{u}}{\sigma}+\frac{2^{5\beta-2}}{\beta}\bigl(\frac{m_{2\beta}}{\sigma^{2\beta}}+1\bigr)$ and $\lambda=\lambda_{0}\alpha_{1}$. Define the cylinder
$\mathcal{K}^{ridge}:=\{(\boldsymbol{\theta},v):\|\boldsymbol{\theta}-\boldsymbol{\theta}^{*}\|_{2}\le\theta_{0},\ v_{0}\le v\le V_{0}\}$. As in Section \ref{Sec:AppC}, we assume $\theta_{0}\le 1$ and $c_{0}\bigl(\alpha_{2}^{\beta-1}+\theta_{0}/(\theta_{0}+1/2)\bigr)\le 1/2$. The reduction via the generalized Poincar\'e--Miranda theorem is identical to that in Section \ref{Sec:AppC} with $\tilde f_{1}^{ridge},\tilde f_{2}^{ridge}$ in place of $\tilde f_{1},\tilde f_{2}$, and yields the following two lemmas.

\begin{lem}\label{lem:probE1 ridge}
Under the assumptions of Theorem \ref{Thm:jointly ridge}, for every $\epsilon\in(0,1/6)$ there is $C=C(m_{2\beta},\sigma,\beta,c_{u},L,M)$ such that, for $n\ge Cd(c_{l}+\lambda_{0})^{-1}[1+\log(\epsilon^{-1})]$, the analogue of $\mathcal{E}_{1}$ for $\tilde f_{1}^{ridge}$ has probability $\ge 1-4\epsilon$.
\end{lem}

\begin{lem}\label{lem:probE2 ridge}
Under the same assumptions and sample size bound, the analogues of $\mathcal{E}_{2}^{\pm}$ for $\tilde f_{2}^{ridge}$ each have probability $\ge 1-\epsilon$.
\end{lem}

\begin{proof}[Proof of Theorem \ref{Thm:jointly ridge}]
Combine Lemmas \ref{lem:probE1 ridge}--\ref{lem:probE2 ridge} with the generalized Poincar\'e--Miranda theorem \cite[Theorem~2.4]{Fr18}; the error bounds follow from \eqref{def:vVtheta ridge}.
\end{proof}

\subsection{Proof of Lemma \ref{lem:probE1 ridge}}\label{ss:appD probE1}

\begin{proof}
The events $\mathcal{E}_{1,1},\mathcal{E}_{1,2},\mathcal{E}_{1,3}$ from \eqref{def:E11E12E13} apply unchanged (with $v_{0},V_{0}$ from \eqref{def:vVtheta ridge}), and Lemma \ref{lem:probE11,E12,E13} gives $\mathbb{P}(\mathcal{E}_{1,1}\cap\mathcal{E}_{1,2}\cap\mathcal{E}_{1,3})\ge 1-4\epsilon$.

Since $\tilde f_{1}^{ridge}(\boldsymbol{\theta},v)=\tilde f_{1}(\boldsymbol{\theta},v)-\lambda\boldsymbol{\theta}$, for $\boldsymbol{\theta}\in\partial\mathcal{B}_{\theta_{0}}(\boldsymbol{\theta}^{*})$,
\[
-\lambda\langle\boldsymbol{\theta},\boldsymbol{\theta}-\boldsymbol{\theta}^{*}\rangle
=-\lambda\|\boldsymbol{\theta}-\boldsymbol{\theta}^{*}\|_{2}^{2}-\lambda\langle\boldsymbol{\theta}^{*},\boldsymbol{\theta}-\boldsymbol{\theta}^{*}\rangle
\le -\lambda\theta_{0}^{2}+\lambda\|\boldsymbol{\theta}^{*}\|_{2}\theta_{0}.
\]
Combining this with \eqref{ineq:suptf1} and $\lambda=\lambda_{0}\alpha_{1}$, it holds as long as $n\ge Cd(c_{l}+\lambda_{0})^{-1}[1+\log(\epsilon^{-1})]$ that
\[
\sup_{v,\boldsymbol{\theta}}\langle\tilde f_{1}^{ridge}(\boldsymbol{\theta},v),\boldsymbol{\theta}-\boldsymbol{\theta}^{*}\rangle
\le L\theta_{0}\alpha_{1}\!\Bigl[-\tfrac{c_{l}+6V_{0}\lambda_{0}}{6V_{0}L}\theta_{0}+9\sqrt{d}\alpha_{1}+\tfrac{\lambda_{0}}{L}\|\boldsymbol{\theta}^{*}\|_{2}-\tfrac{\lambda_{0}}{L}\theta_{0}\Bigr]=0,
\]
the last equality by the definition of $\theta_{0}$ in \eqref{def:vVtheta ridge}.
\end{proof}

\subsection{Proof of Lemma \ref{lem:probE2 ridge}}\label{ss:appD probE2}

\begin{proof}
$\tilde f_{2}^{ridge}=\tilde f_{2}$, so Claim 1 of Lemma \ref{lem:probE2 re} carries over verbatim and gives the upper and lower concentration estimates of the form \eqref{ineq:tf2upper re} and \eqref{ineq:tf2lower re} for $\tilde f_{2}^{ridge}$.

For Claim 2, with $t=\sigma^{2}/v^{2}-1$, $\tilde B^{v}_{+}(v)\le a|t|^{\beta}+bt+c=\bar B^{v}_{+}(t)$ exactly as in \eqref{e:TilBvBtCom}, with positive
\[
a=O(\alpha_{2}^{\beta-1}),\quad b=1+\theta_{0}+\tfrac{(1+1/\theta_{0})c_{u}\theta_{0}^{2}}{(1-\alpha_{2})\sigma^{2}},\quad c=O(\theta_{0}+\alpha_{2}^{\beta-1}).
\]
For $C=C(m_{2\beta},\sigma,\beta,c_{u},L)$ large and $n\ge Cd[1+\log(\epsilon^{-1})]$, $b>c+3/4$ and $b-a(2c)^{\beta-1}>c+1/2$. Lemma \ref{lem:root bound}(i) gives a largest root $t_{+}$ with
\[
t_{+}\ge -\tfrac{c}{b-a(2c)^{\beta-1}}\ge -\tfrac{c}{c+1/2}\ge -c_{0}\!\bigl(\alpha_{2}^{\beta-1}+\tfrac{\theta_{0}}{\theta_{0}+1/2}\bigr),
\]
the last inequality by the definition of $c$ and $c_{0}$. Setting $v_{+}:=\sigma/\sqrt{1+t_{+}}\le V_{0}$ gives $\tilde B^{v}_{+}(v_{+})\le 0$, and monotonicity of $\tilde f_{2}^{ridge}$ in $v$ yields $\mathbb{P}(\mathcal{E}_{2}^{ridge+})\ge 1-\epsilon$. The bound on $\mathcal{E}_{2}^{ridge-}$ is similar.
\end{proof}

\end{document}